\numberwithin{equation}{section}
\theoremstyle{plain}
\newtheorem{maintheorem}{Theorem}
\newtheorem{theorem}{Theorem}[section]
\newtheorem{proposition}[theorem]{Proposition}
\newtheorem{prop}[theorem]{Proposition}
\newtheorem{lemma}[theorem]{Lemma}
\newtheorem{claim}{Claim}
\theoremstyle{remark}
\newtheorem{remark}[theorem]{Remark}
\newtheorem*{ack}{Acknowledgement}
\theoremstyle{definition}
\newtheorem{question}[theorem]{Question}
\newcommand{\HH}{\mathcal{H}}
\newcommand{\LL}{\mathcal{L}}
\newcommand{\R}{\mathbb{R}}
\newcommand{\N}{\mathbb{N}}
\newcommand{\iii}{\mathbf{i}}
\newcommand{\jjj}{\mathbf{j}}
\newcommand{\kkk}{\mathbf{k}}
\newcommand{\eps}{\varepsilon}
\newcommand{\fii}{\varphi}
\newcommand{\roo}{\varrho}
\newcommand{\la}{\langle}
\newcommand{\ra}{\rangle}
\DeclareMathOperator{\diam}{diam}
\DeclareMathOperator{\proj}{proj}
\DeclareMathOperator{\sgn}{sgn}
\DeclareMathOperator{\linspan}{span}
\DeclareMathOperator{\esssup}{ess\,sup}
\DeclareMathOperator{\essinf}{ess\,inf}
\DeclareMathOperator{\udimloc}{\overline{dim}_{loc}}
\DeclareMathOperator{\ldimloc}{\underline{dim}_{loc}}
\DeclareMathOperator{\dimh}{dim_H}
\DeclareMathOperator{\ldimh}{\underline{dim}_H}
\DeclareMathOperator{\udimm}{\overline{dim}_M}
\DeclareMathOperator{\diml}{dim_L}
\DeclareMathOperator{\dimaff}{dim_{aff}}
\renewcommand{\iint}{\int\hspace{-0.1in}\int}
\renewcommand{\epsilon}{\varepsilon}
\renewcommand{\phi}{\varphi}
\newcommand{\ii}{\mathbf{i}}
\newcommand{\jj}{\mathbf{j}}
\newcommand{\tv}{\mathbf{v}}
\newcommand{\Av}{\mathbf{A}}
\newcommand{\pv}{\mathbf{p}}
\newcommand{\dd}{\,\mathrm{d}}
\providecommand{\inner}[2]{\langle#1,#2\rangle}
\newcommand{\Id}{\textrm{Id}}
\newcommand{\bi}{\mathbf{i}}
\newcommand{\bj}{\mathbf{j}}
\newcommand{\Fb}{\Phi}
\begin{document}

\title{Dimension of self-affine sets for fixed translation vectors}

\author{Bal\'azs B\'ar\'any}
\address[Bal\'azs B\'ar\'any]{
  Mathematics Institute \\
  University of Warwick \\
  Coventry CV4 7AL \\
  United Kingdom \&
  Budapest University of Technology and Economics \\
  MTA-BME Stochastics Research Group \\
  P.O.\ Box 91 \\
  1521 Budapest \\
  Hungary}
\email{balubsheep@gmail.com}

\author{Antti K\"aenm\"aki}
\address[Antti K\"aenm\"aki]{
  Department of Mathematics and Statistics \\
  P.O.\ Box 35 (MaD) \\
  FI-40014 University of Jyv\"askyl\"a \\
  Finland}
\email{antti.kaenmaki@jyu.fi}

\author{Henna Koivusalo}
\address[Henna Koivusalo]{
  Faculty of Mathematics \\
  University of Vienna \\
  Oskar Morgensternplatz 1 \\
  1090 Vienna, Austria }
\email{henna.koivusalo@univie.ac.at}

\thanks{BB and HK were partially supported by Stiftung Aktion \"Osterreich Ungarn (A\"OU) grants 92\"ou6. BB acknowledges the support from the grant OTKA K104745, OTKA K123782, NKFI PD123970 and the J\'anos Bolyai Research Scholarship of the Hungarian Academy of Sciences, and HK from EPSRC EP/L001462 and Osk.\ Huttunen foundation.}
\subjclass[2010]{Primary 37C45; Secondary 28A80}
\keywords{Self-affine set, self-affine measure, Hausdorff dimension.}
\date{\today}

\begin{abstract}
  An affine iterated function system is a finite collection of affine invertible contractions and the invariant set associated to the mappings is called self-affine. In 1988,
  Falconer proved that, for given matrices, the Hausdorff dimension of the self-affine set is the affinity dimension for Lebesgue almost every translation vectors. Similar statement was proven by Jordan, Pollicott, and Simon in 2007 for the dimension of self-affine measures. In this
  article, we have an orthogonal approach. We introduce a class of self-affine systems in which, given translation vectors, we get the same results for Lebesgue almost all matrices. The proofs rely on Ledrappier-Young theory that was recently verified for affine iterated function systems by B\'ar\'any and K\"aenm\"aki, and a new transversality condition, and in particular they do not depend on properties of the Furstenberg measure. This allows our results to hold for self-affine sets and measures in any Euclidean space.
\end{abstract}

\maketitle

\section{Introduction}

For a non-singular $d\times d$ matrix $A \in GL_d(\R)$ and a translation vector $v\in\R^d$, let us denote the affine map $x\mapsto Ax+ v$ by $f=f(A,v)$.
Let $\Av=(A_1,\dots, A_N) \in GL_d(\R)^N$ be a tuple of contractive non-singular $d\times d$ matrices and let $\tv=(v_1,\dots,v_N)\in(\R^d)^N$ be a tuple
of translation vectors. Here and throughout we assume that $N \ge 2$ is an integer.
The tuple $\Fb_{\Av,\tv}=(f_1, \dots, f_N)$ obtained from the affine mappings $f_i=f(A_i, v_i)$ is called the \emph{affine iterated function system (IFS)}.
Hutchinson \cite{Hutchinson81} showed that for each $\Phi_{\Av,\tv}$ there exists a unique non-empty compact set $E = E_{\Av,\tv}$ such that
\begin{equation*}%\label{eq:definv}
  E=\bigcup_{i=1}^Nf_i(E).
\end{equation*}
The set $E_{\Av,\tv}$ associated to an affine IFS $\Phi_{\Av,\tv}$ is called \emph{self-affine}. In the special case where each of the linear maps
$A_i$ is a scalar multiple of an isometry, we call $\Phi_{\Av,\tv}$ a \emph{similitude iterated function system} and the set $E_{\Av,\tv}$ \emph{self-similar}.

The dimension theory of self-similar sets satisfying a sufficient separation condition was completely resolved by Hutchinson \cite{Hutchinson81}.
Without separation, i.e.\ when the images $f_i(E)$ and $f_j(E)$ can have severe overlapping, the problem is more difficult. The most recent progress in this direction is by Hochman \cite{hochman2015self, Hochman14}.
Among other things, he managed to calculate the Hausdorff dimension of a self-similar set on
the real line under very mild assumptions.

In contrast, the dimension theory of self-affine sets and measures is still far from being fully understood. Traditionally, while working on the topic, it has been common to focus on specific subclasses of self-affine sets, for which more methods are available. One such standard subclass is that of self-affine carpets. In this class special relations between the affine maps are imposed, which makes the structure of the self-affine set more tractable. For recent results for self-affine carpets, see \cite{FergusonFraserSahlsten2015,fraser2014dimensions,KaenmakiOjalaRossi2016}. Another method of study and a class of self-affine sets to which it applies was introduced by Falconer \cite{Falconer88} and later extended by Solomyak~\cite{Solomyak98}. They proved that for a fixed matrix tuple $\Av = (A_1,\ldots,A_N) \in GL_d(\R)^N$, with the operator norms $\|A_i\|$ strictly less than $1/2$, the Hausdorff dimension of the self-affine set $E_{\Av,\tv}$,
$\dimh(E_{\Av,\tv})$, is the affinity dimension of $\Av$, $\dimaff(\Av)$, for $\LL^{dN}$-almost all $\tv \in (\R^d)^N$. Here $\LL^d$ is the $d$-dimensional
Lebesgue measure and the affinity dimension, defined below, is a number depending only on $\Av$. A similar result, due to Jordan, Pollicott, and Simon \cite{JordanPollicottSimon07}, also holds for self-affine measures.

Let us next give an intuitive explanation for Falconer's result. It is easy to see that
\begin{equation} \label{eq:limit-set}
  E = \bigcap_{n=1}^\infty \bigcup_{i_1,\ldots,i_n \in \{1,\ldots,N\}} f_{i_1} \circ \cdots \circ f_{i_n}(B(0,R)),
\end{equation}
where $B(0,R)$ is the closed ball centered at the origin with radius $R = \max_{i \in \{1,\ldots,N\}}|v_i|/(1-\max_{i \in \{1,\ldots,N\}}\|A_i\|) > 0$. Since we are interested in the dimension of $E$ we may, by rescaling if necessary, assume that $R=1$. We immediately see from \eqref{eq:limit-set} that for each fixed $n$ the
sets $f_{i_1} \circ \cdots \circ f_{i_n}(B(0,1))$ form a cover for the self-affine set. For any $A \in GL_d(\R)$, let $1>\alpha_1(A)\ge\cdots\ge\alpha_d(A)>0$ be the
lengths of the principal semiaxes
of the ellipse $A(B(0,1))$. Observe that $f_{i_1} \circ \cdots \circ f_{i_n}(B(0,1))$ is a translated copy of $A_{i_1} \cdots A_{i_n}(B(0,1))$. To find the Hausdorff dimension
of $E$, it is necessary to find optimal covers for $E$. Natural candidates for such covers come immediately from \eqref{eq:limit-set}. In $\R^2$, we need approximately
$\alpha_1(A_{i_1}\cdots A_{i_n})/\alpha_2(A_{i_1}\cdots A_{i_n})$ many balls of radius $\alpha_2(A_{i_1}\cdots A_{i_n})$ to cover $f_{i_1} \circ \cdots \circ f_{i_n}(B(0,R))$.
By the definition of the $s$-dimensional Hausdorff measure $\HH^s$, it follows that
\begin{align*}
  \HH^s(E) &\lesssim \lim_{n \to \infty} \sum_{i_1,\ldots,i_n \in \{1,\ldots,N\}} \frac{\alpha_1(A_{i_1} \cdots A_{i_n})}{\alpha_2(A_{i_1} \cdots A_{i_n})} \alpha_2(A_{i_1} \cdots A_{i_n})^{s} \\
  &= \lim_{n \to \infty} \sum_{i_1,\ldots,i_n \in \{1,\ldots,N\}} \alpha_1(A_{i_1} \cdots A_{i_n}) \alpha_2(A_{i_1} \cdots A_{i_n})^{s-1}.
\end{align*}
The \emph{singular value pressure} $P_{\Av}$ of $\Av$ in this case is
\begin{equation*}
  P_{\Av}(s) = \lim_{n \to \infty} \tfrac{1}{n} \log \sum_{i_1,\ldots,i_n \in \{1,\ldots,N\}} \alpha_1(A_{i_1} \cdots A_{i_n}) \alpha_2(A_{i_1} \cdots A_{i_n})^{s-1}.
\end{equation*}
For the complete definition, see \eqref{esap}. The function $s \mapsto P_{\Av}(s)$ is strictly decreasing and it has a unique zero. If $P_{\Av}(s)<0$, then the sum above is
strictly less than one
% \begin{equation*}
%   \sum_{i_1,\ldots,i_n \in \{1,\ldots,N\}} \alpha_1(A_{i_1} \cdots A_{i_n}) \alpha_2(A_{i_1} \cdots A_{i_n})^{s-1} < 1
% \end{equation*}
for all large enough $n$. Therefore, defining $\dimaff(\Av)$ to be the minimum of $2$ and $s$ for which $P_{\Av}(s)=0$, we have $\dimh(E_{\Av,\tv}) \le \dimaff(\Av)$ for
all $\tv \in (\R^2)^N$. The question then becomes, when are the covers obtained in this way optimal. It is easy to find situations in which some other cover is more efficient;
see Figure \ref{fig:bad-cases}. Intuitively, since the role of the translation vector is to determine the placement of the ellipses, Falconer's result asserts that one never
encounters these situations with a random choice of translation vectors.

\begin{figure}[t]
  \includegraphics[scale=0.55,angle=270]{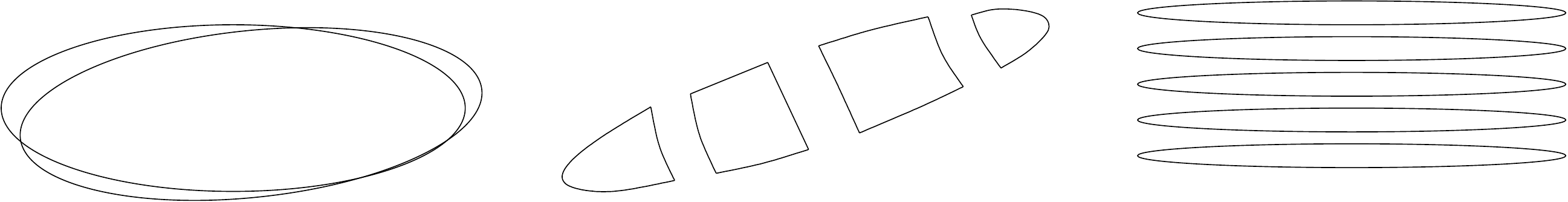}
  \caption{The picture illustrates three example cases where some other covering is more optimal than the one obtained from \eqref{eq:limit-set}. From left to right: ellipses have severe overlapping, the ellipsis does not contain $E$ all the way, and ellipses are badly aligned.} \label{fig:bad-cases}
\end{figure}

Recently, there is an increasing amount of activity in studying the case of general affine iterated function systems, based neither on the strict structure of the self-affine carpets nor Lebesgue generic translation vectors. Morris and Shmerkin \cite{MorrisShmerkin2016} proved that $\dimh(E_{\Av,\tv}) = \dimaff(\Av)$, under both an exponential separation condition on the matrices, that was first introduced by Hochman and Solomyak \cite{HochmanSolomyak2016}, and a separation condition on the IFS. An interesting observation is that the result of Hueter and Lalley \cite{HueterLalley95} can be covered as a special case of \cite[Theorem~1.3]{MorrisShmerkin2016}, and hence, the techniques used by Morris and Shmerkin give it an alternative proof. In particular, if a matrix tuple in $GL_2(\R)^N$ satisfy the dominated splitting condition (for a precise definition, see \S \ref{section:setup}) and the strong separation condition on the projective line, then it satisfies the exponential separation condition. Since both the dominated splitting and strong separation conditions are open properties (i.e.\ if a matrix tuple satisfies it, then it holds in its open neighbourhood), there exists an open set, where the exponential separation condition holds in $GL_2(\R)^N$. In general, the exponential separation condition holds on a dense $G_{\delta\sigma}$ set in $GL_2(\R)^N$, but it is unknown whether it is satisfied by measure theoretically generic tuples of matrices.

Morris and Shmerkin need to further assume that either a so called bunching condition holds, or argue through an application of a result of Rapaport \cite{Rapaport2015}, which assumes that the dimension of the Furstenberg measure (for the definition, see \S \ref{sec:transversality}) is large compared to the Lyapunov dimension. Similarly, Falconer and Kempton \cite{falconer2015planar} prove that $\dimh(E_{\Av,\tv}) = \dimaff(\Av)$, assuming a positivity condition on the matrices, a separation condition on the IFS, and a condition on the dimension of the Furstenberg measure. Both the results of Morris and Shmerkin, and of Falconer and Kempton, rely on calculating the dimension of the Furstenberg measure and can, with the current knowledge of Furstenberg measures, only be applied in the plane.

Our approach combines Ledrappier-Young theory, which was recently proven to hold for many measures on self-affine sets by B\'ar\'any and K\"aenm\"aki \cite{barany2015,BaranyKaenmaki}, and a transversality argument. Our results are a natural counterpart to Falconer's result \cite{Falconer88}: we fix the tuple of translation vectors and investigate the dimension for different choices of matrix tuples. In the same vein as with the intuitive explanation of Falconer's result, one expects that, keeping the centers of the ellipses fixed, a small random change in the shape of
the ellipses guarantees that the covers obtained from \eqref{eq:limit-set} are optimal. Indeed, in the main results of the paper, Theorems \ref{thm:mainplanar} and
\ref{thm:main}, we fix a tuple of distinct translation vectors $\tv \in (\R^d)^N$, and show that $\dimh(E_{\Av,\tv}) = \dimaff(\Av)$ for $\LL^{d^2N}$-almost all $\Av$ in a large
open set of matrix tuples. Notably, a separation condition holds in this open set and for $d\ge 3$ we also need to impose a totally dominated splitting condition (see \eqref{eq:d}) on the matrices. The sharpness and possible extensions are discussed in Remarks \ref{rem:diff-proofs} and \ref{ex:nonsharpness}.

A key ingredient in the proof is a verifiable transversality condition, which we call the modified transversality condition, which we introduce in a general setting in \S \ref{sec:transversality}. This condition allows us to calculate the Hausdorff dimension of self-affine sets and measures through the Ledrappier-Young formula. Therefore, in order to prove the main theorems it suffices to verify the modified transversality condition in the particular setups. We note that the method for calculating dimensions of measures on self-affine sets, as described in \S \ref{sec:transversality}, is rather general, and immediately applies to give stronger results, if there are improvements on the existing results on Ledrappier-Young theory and transversality arguments that the proofs rely on. Another curious feature of our results is that the planar case is different from the higher dimensional case both in statement and in proof; see Remark \ref{rem:diff-proofs} for comparison. The planar case is stated in Theorem \ref{thm:mainplanar} and the higher dimensional case in Theorem \ref{thm:main}.

Since our proofs do not rely on dimension estimates for the Furstenberg measures, our results hold not only for dimensions of  self-affine sets but also self-affine measures, and for any ambient space $\R^d$, not just in the plane. Furthermore, our results hold for an open set of matrix tuples even in parts of the space where the Furstenberg measure has a small dimension compared to the Lyapunov dimension, namely, when the bunching condition does not hold; see Remark \ref{rem:remark2.2}. This is in stark contrast to the earlier works.

The remainder of the paper is organized as follows. In \S \ref{section:setup}, we give a detailed explanation of the setting and state our main results. We explore in \S \ref{sec:transversality} how the Hausdorff dimension of an ergodic measure satisfying the Ledrappier-Young formula can be calculated under a modified self-affine transversality condition. In \S \ref{sec:sub-systems}, we prove an analogous result for self-affine sets. Finally, the proof of Theorem \ref{thm:mainplanar} is given in \S \ref{sec:first-proof} and Theorem \ref{thm:main} is proved in \S \ref{sec:higherdimcase}.

\section{Preliminaries and statements of main results}\label{section:setup}

Let $\Sigma$ be the set of one-sided words of symbols $\left\{1,\dots,N\right\}$ with infinite length, i.e.\ $\Sigma=\left\{1,\dots,N\right\}^{\N}$. Let us denote the left-shift operator on $\Sigma$ by $\sigma$. Let the set of words with finite length be $\Sigma_*=\bigcup_{n=0}^{\infty}\left\{1,\dots,N\right\}^n$ with the convention that the only word of length $0$ is the empty word $\varnothing$. The set $\Sigma_n = \{ 1,\ldots,N \}^n$ is the collection of words of length $n$. Denote the length of $\bi \in \Sigma \cup \Sigma_*$ by $|\bi|$, and for finite or infinite words $\bi$ and $\bj$, let $\bi\wedge\bj$ be their common beginning. The concatenation of two words $\iii$ and $\jjj$ is denoted by $\iii\jjj$. We define the cylinder sets of $\Sigma$ in the usual way, that is, by setting
$$
  [\iii] = \{\jjj\in\Sigma : \iii \wedge \jjj = \iii \} = \{ \iii\jjj \in \Sigma : \jjj \in \Sigma \}
$$
for all $\iii \in \Sigma_*$. For a word $\bi=(i_1,\dots,i_n)$ with finite length let $f_{\bi}$ be the composition $f_{i_1}\circ\cdots\circ f_{i_n}$ and $A_{\bi}$ be the product $A_{i_1}\cdots A_{i_n}$. For $\bi\in\Sigma \cup \Sigma_*$ and $n < |\iii|$, let $\bi|_n$ be the first $n$ symbols of $\bi$. Let $\bi|_0=\varnothing$, $A_{\varnothing}$ be the identity matrix, and $f_{\varnothing}$ be the identity function.
Finally, we define the \textit{natural projection} $\pi=\pi_{\Av, \tv} \colon \Sigma \to E_{\Av,\tv}$ by setting
\begin{equation}\label{eq:natproj}
  \pi(\bi)=\sum_{k=1}^{\infty}A_{\bi|_{k-1}}v_{i_k}
\end{equation}
for all $\iii \in \Sigma$. Note that $E=\bigcup_{\bi\in \Sigma}\pi(\bi)$.

Denote by $\alpha_i(A)$ the $i$-th largest (counting with multiplicity) {\it singular value} of a matrix $A \in GL_d(\R)$, i.e.\ the positive square root of the $i$-th eigenvalue of $AA^T$, where $A^T$ is the transpose of $A$. We note that $\alpha_1(A)$ is the usual operator norm $\|A\|$ induced by the Euclidean norm on $\R^d$ and $\alpha_d(A)$ is the mininorm $\mathfrak{m}(A)=\|A^{-1}\|^{-1}$. We say that $A$ is \emph{contractive} if $\|A\|<1$. For a given tuple $\Av = (A_1,\ldots,A_N) \in GL_d(\R)^N$ we also set $\|\Av\| = \max_{i \in \{ 1,\ldots,N \}} \|A_i\|$ and $\mathfrak{m}(\Av) = \min_{i \in \{ 1,\ldots,N \}} \mathfrak{m}(A_i)$. Following Falconer \cite{Falconer88}, we define the {\it singular value function} $\phi^s$ of a matrix $A$ by setting
\begin{equation*}%\label{esvf}
  \phi^s(A) =
  \begin{cases}
    \alpha_1(A)\cdots\alpha_{\lfloor s\rfloor}(A)\alpha_{\lceil s\rceil}(A)^{s-\lfloor s\rfloor}, &\text{if } 0\leq s\leq d, \\
    |\det A|^{s/d}, &\text{if } s>d.
  \end{cases}
\end{equation*}
The singular value function satisfies
\begin{equation*}
  \fii^s(AB) \le \fii^s(A) \fii^s(B)
\end{equation*}
for all $A,B \in GL_d(\R)$. Moreover, if $(A_1,\ldots,A_N) \in GL_d(\R)^N$, then
\begin{equation} \label{eq:svf2}
  \fii^s(A_\iii)\mathfrak{m}(\Av)^{\delta |\iii|}
  \le \fii^{s+\delta}(A_\iii)
  \le \fii^s(A_\iii)\|\Av\|^{\delta |\iii|}
\end{equation}
for all $\iii \in \Sigma_*$ and $s,\delta \ge 0$.

For a tuple $\Av=(A_1,\dots, A_N)\in GL_d(\R)^N$ of contractive non-singular $d\times d$ matrices the function $P_{\Av}\colon [0,\infty)\to\R$ defined by
\begin{equation} \label{esap}
  P_{\Av}(s)=\lim_{n\rightarrow\infty}\tfrac{1}{n}\log\sum_{\iii \in \Sigma_n} \phi^s(A_{\iii})
\end{equation}
is called the \emph{singular value pressure}. It is well-defined, continuous, strictly decreasing on $[0,\infty)$, and convex between any two integers. Moreover, $P_{\Av}(0)=\log N$ and $\lim_{s\rightarrow\infty}P_{\Av}(s)=-\infty$. Let us denote by $\dimaff \Av$ the minimum of $d$ and the unique root of the singular value pressure function and call it the \emph{affinity dimension}.

If $\mu$ is a Radon measure on $\R^d$, then the \emph{upper} and \emph{lower local dimensions} of $\mu$ at $x$ are defined by
\begin{equation*}
  \udimloc(\mu,x) = \limsup_{r \downarrow 0} \frac{\log\mu(B(x,r))}{\log r} \quad \text{and} \quad \ldimloc(\mu,x) = \liminf_{r \downarrow 0} \frac{\log\mu(B(x,r))}{\log r},
\end{equation*}
respectively. The measure $\mu$ is \emph{exact-dimensional} if
\begin{equation*}
  \essinf_{x \sim \mu} \ldimloc(\mu,x) = \esssup_{x \sim \mu} \udimloc(\mu,x).
\end{equation*}
In this case, the common value is denoted by $\dim\mu$. The above quantities are naturally linked to set dimensions. For example, the \emph{lower Hausdorff dimension} of the measure $\mu$ is
\begin{equation*}
  \ldimh\mu = \essinf_{x \sim \mu} \ldimloc(\mu,x) = \inf\{ \dimh A : A \text{ is a Borel set with } \mu(A)>0 \}.
\end{equation*}
Here $\dimh A$ is the Hausdorff dimension of the set $A$. For more detailed information, the reader is referred to \cite{Falconer1997}.

Fix a probability vector $\pv=(p_1,\dots,p_N) \in (0,1)^N$ and denote the product $p_{i_1} \cdots p_{i_n}$ by $p_\iii$ for all finite words $\iii = (i_1,\ldots,i_n)$. Let $\nu_{\pv}$ be the corresponding \emph{Bernoulli measure} on $\Sigma$. It is uniquely defined by setting $\nu_{\pv}([\iii])=p_{\iii}$ for all $\iii \in \Sigma_*$. It is easy to see that $\nu_{\pv}$ is $\sigma$-invariant and ergodic. We say that $\nu$ on $\Sigma$ is a {\em step-$n$ Bernoulli measure} if it is a Bernoulli measure on $(\Sigma_n)^\N$ for some probability vector from $(0,1)^{N^n}$. Furthermore, we say that a measure $\nu$ on $\Sigma$ is {\em quasi-Bernoulli} if there is a constant $C \ge 1$ such that
\[
  C^{-1}\nu([\bi])\nu([\bj])\le \nu([\bi\bj])\le C\nu([\bi])\nu([\bj])
\]
for all $\bi, \bj\in\Sigma_*$. The \emph{entropy} of a $\sigma$-invariant measure $\nu$ on $\Sigma$ is
\begin{equation}\label{eq:entropy}
  h_{\nu}=-\lim_{n\to \infty}\tfrac 1n \sum_{\bi\in\Sigma_n}\nu([\bi])\log \nu([\bi]).
\end{equation}
Note that the entropy of a Bernoulli measure $\nu_{\pv}$ is given by $h_{\pv} = -\sum_{i=1}^Np_i\log p_i$.

If $\nu_\pv$ is a Bernoulli measure and $\Fb_{\Av,\tv}$ is an affine iterated function system, then the push-down measure $\mu_{\Av,\tv,\pv}=\pi_{\Av,\tv}\nu_{\pv}=\nu_{\pv}\circ(\pi_{\Av,\tv})^{-1}$ is called \textit{self-affine}. It is well known that the self-affine measure $\mu=\mu_{\Av,\tv,\pv}$ satisfies
\begin{equation*} %\label{eq:s-a_measure}
  \mu = \sum_{i=1}^N p_i f_i\mu.
\end{equation*}
We say that $\Av\in GL_d(\R)^N$ satisfies the {\em totally dominated splitting condition} if there exist constants $C\ge 1$ and $0<\tau<1$ such that for every $i\in\{1,\dots,d-1\}$ either
\begin{equation}\label{eq:domsplit}
  \frac{\alpha_{i+1}(A_{\iii})}{\alpha_{i}(A_{\iii})}\le C\tau^{|\iii|}
\end{equation}
for every $\iii\in\Sigma_*$ or
$$
  \frac{\alpha_{i+1}(A_\bi)}{\alpha_i(A_\bi)}>C^{-1}
$$
for every $\iii\in\Sigma_*$. By Bochi and Gourmelon \cite[Theorem~B]{BochiGourmelon}, the set
\begin{equation} \label{eq:d}
  \mathcal{D} = \{\Av\in GL_d(\R)^N : \text{ \eqref{eq:domsplit} holds for every }i\in\{1,\dots,d\}\}.
\end{equation}
is an open subset of $GL_d(\R)^N$.

If $\nu$ is an ergodic probability measure on $\Sigma$, then, by Oseledets' theorem, there exist constants $0<\chi_{1}(\Av, \nu)\leq\cdots\leq\chi_{d}(\Av, \nu)<\infty$ such that
\begin{equation} \label{eq:lyapexp}
  \chi_i(\Av, \nu) = -\lim_{n\to\infty}\tfrac{1}{n}\log\alpha_i(A_{i_1}\cdots A_{i_n})
\end{equation}
for $\nu$-almost every $\bi\in \Sigma$. The numbers $\chi_{i}(\Av, \nu)$ are called the \emph{Lyapunov exponents} of $\Av$ with respect to $\nu$. The Lyapunov exponents of
$\Av$ with respect to a Bernoulli measure $\nu_{\pv}$ are denoted by $\chi_{i}(\Av, \pv)$. Furthermore, let us define the \emph{Lyapunov dimension} of $\nu$ by
$$
 \diml\nu = \min_{k\in\{0, \dots, d\}}\biggl\{k + \frac{h_{\nu}-\sum_{j=1}^{k} \chi_j(\Av,\nu)}{\chi_{k+1}(\Av,\nu)},d\biggr\}.
$$
The Lyapunov dimension of the projected measure $\pi \nu$ on $E_{\Av,\tv}$ is defined by setting $\diml\pi\nu=\diml\nu$. K\"aenm\"aki \cite[Theorems 2.6 and 4.1]{Kaenmaki04}
proved the existence of ergodic equilibrium states. If $s = \dimaff\Av \le d$, then an ergodic $s$-equilibrium state $\mu$ of $\Av$ on $\Sigma$ is defined by the equality
\begin{equation} \label{eq:eq-state}
  h_\mu = -\lim\tfrac{1}{n}\sum_{\iii\in\Sigma_n}\mu([\iii])\log\fii^s(A_\iii) = \sum_{j=1}^{\lfloor s \rfloor} \chi_j(\Av,\mu) + (s-\lfloor s \rfloor)\chi_{\lceil s \rceil}(\Av,\mu).
\end{equation}
Here the second equality follows from Kingman's ergodic theorem.
It is easy to see that such an $s$-equilibrium state is a measure of maximal Lyapunov dimension.

B\'ar\'any and K\"aenm\"aki recently proved in \cite[Theorem~2.3]{BaranyKaenmaki} that the self-affine measure $\mu_{\Av,\tv,\pv}$ is exact-dimensional regardless of the choices of $\Av$, $\tv$, and $\pv$ provided that all the corresponding Lyapunov exponents are distinct. Furthermore, they showed that if $\Av$ satisfies the totally dominated splitting condition, then the image of any quasi-Bernoulli measure under $\pi_{\Av,\tv}$ is exact-dimensional regardless of the choice of $\tv$; see \cite[Theorem~2.6]{BaranyKaenmaki}.

Let us next state the main results of the article. Let $\mathcal L^d$ be the $d$-dimensional Lebesgue measure and $\udimm$ be the upper Minkowski dimension. We define $\|\tv\|=\max_{i\in\{1, \dots, N\}}|v_i|$ and recall that $\|\Av\|=\max_{i\in\{1, \dots, N\}}\|A_i\|$.

\begin{maintheorem}\label{thm:mainplanar}
  Suppose that $\tv=(v_1,\dots, v_N)\in(\R^2)^N$ is such that $v_i\neq v_j$ for $i\neq j$ and
  \begin{equation}\label{eq:matricesplanar}
    \mathcal{A}_{\tv}=\biggl\{\Av\in GL_2(\R)^N : 0<\max_{i\neq j}\frac{\|A_i\|+\|A_j\|}{|v_i-v_j|}\cdot\frac{\|\tv\|}{1-\|\Av\|}<\frac{\sqrt{2}}{2}\biggr\}.
  \end{equation}
  Then
  $$
    \dimh E_{\Av,\tv}=\udimm E_{\Av,\tv}=\dimaff\Av
  $$
  for $\mathcal{L}^{4N}$-almost all $\Av\in\mathcal{A}_{\tv}$. Moreover, for every probability vector $\pv \in (0,1)^N$ the corresponding self-affine measure $\mu=\mu_{\Av,\tv,\pv}$ satisfies
  $$
    \dim\mu=\diml\mu=\min\left\{\frac{h_{\pv}}{\chi_1(\Av,\pv)},1+\frac{h_{\pv}-\chi_1(\Av,\pv)}{\chi_2(\Av,\pv)}\right\}
  $$
  for $\mathcal{L}^{4N}$-almost all $\Av\in\mathcal{A}_{\tv}$.
\end{maintheorem}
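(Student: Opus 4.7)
The plan is to derive Theorem~\ref{thm:mainplanar} from the abstract dimension criterion that will be developed in Sections~\ref{sec:transversality}--\ref{sec:sub-systems}: once the modified transversality condition is verified for $\mathcal{L}^{4N}$-almost every $\Av\in\mathcal{A}_\tv$, the Ledrappier--Young formula of \cite{BaranyKaenmaki} yields the dimension equalities for push-downs of quasi-Bernoulli measures, and applying this to an $s$-equilibrium state with $s=\dimaff\Av$ produces the set statement. The novelty in the planar case therefore lies in the parameter-exclusion step verifying transversality in $\mathcal{A}_\tv$; the numerical threshold $\sqrt{2}/2$ in \eqref{eq:matricesplanar} is tailored to this purpose.

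First I would check that $\mathcal{A}_\tv$ enforces the strong separation condition. Setting $R=\|\tv\|/(1-\|\Av\|)$ so that $E_{\Av,\tv}\subset B(0,R)$, one has $f_i(B(0,R))\subset B(v_i,R\|A_i\|)$ for each $i$, and the inequality in \eqref{eq:matricesplanar} gives $R(\|A_i\|+\|A_j\|)<\tfrac{\sqrt{2}}{2}|v_i-v_j|$, which forces $B(v_i,R\|A_i\|)\cap B(v_j,R\|A_j\|)=\emptyset$ with uniform slack over $\mathcal{A}_\tv$. Consequently, by \cite[Theorem~2.6]{BaranyKaenmaki}, for every $\Av\in\mathcal{A}_\tv$ and every quasi-Bernoulli measure $\nu$ on $\Sigma$ the push-down $\pi_{\Av,\tv}\nu$ is exact-dimensional, and the remaining task is to identify $\dim\pi_{\Av,\tv}\nu$ with the Lyapunov dimension.

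The main obstacle is verifying the modified transversality condition. For distinct $\bi,\bj\in\Sigma$ with $\ell=|\bi\wedge\bj|$, the natural projection from \eqref{eq:natproj} obeys
\begin{equation*}
  \pi_{\Av,\tv}(\bi)-\pi_{\Av,\tv}(\bj)=A_{\bi|_\ell}\bigl(\pi_{\Av,\tv}(\sigma^\ell\bi)-\pi_{\Av,\tv}(\sigma^\ell\bj)\bigr),
\end{equation*}
and the bracketed difference decomposes as $v_{i_{\ell+1}}-v_{j_{\ell+1}}$ plus a tail of norm at most $2R\|\Av\|$, which by \eqref{eq:matricesplanar} is strictly dominated by the leading vector. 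The key quantitative bound I would establish is that, for every line $V\subset\R^2$ and every $r>0$,
\begin{equation*}
  \mathcal{L}^{4N}\bigl\{\Av\in\mathcal{A}_\tv:\,|\proj_V(\pi_{\Av,\tv}(\bi)-\pi_{\Av,\tv}(\bj))|\le r\,\alpha_2(A_{\bi|_\ell})\bigr\}\lesssim r,
\end{equation*}
which is the two-dimensional form of the modified transversality condition. I would prove this by freezing all matrices except $A_{i_{\ell+1}}$ and using Fubini in its four entries, exploiting the fact that the derivative of the above difference with respect to these entries is non-degenerate owing to the strict inequality in \eqref{eq:matricesplanar}. The hardest technical point is making these bounds summable uniformly in $\bi,\bj$ when plugged into the transversality integral of Section~\ref{sec:transversality}; the slack $\sqrt{2}/2$ in \eqref{eq:matricesplanar} is precisely what controls the geometric-series tail coming from \eqref{eq:natproj}.

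Combining these ingredients with Sections~\ref{sec:transversality}--\ref{sec:sub-systems} yields, for $\mathcal{L}^{4N}$-almost every $\Av\in\mathcal{A}_\tv$ and every probability vector $\pv\in(0,1)^N$, the identity $\dim\pi_{\Av,\tv}\nu_{\pv}=\diml\nu_{\pv}$; in the planar case the Lyapunov dimension simplifies to the explicit $\min$-formula stated in the theorem. Applied to an ergodic $s$-equilibrium state with $s=\dimaff\Av$, whose existence is guaranteed by \cite[Theorems~2.6 and 4.1]{Kaenmaki04} and whose Lyapunov dimension equals $\dimaff\Av$ via \eqref{eq:eq-state}, this produces a measure supported on $E_{\Av,\tv}$ of dimension $\dimaff\Av$, hence $\dimh E_{\Av,\tv}\ge\dimaff\Av$. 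The matching upper bound $\udimm E_{\Av,\tv}\le\dimaff\Av$ is Falconer's classical covering estimate derived from \eqref{eq:limit-set} and the definition of the singular value pressure.
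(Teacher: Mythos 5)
Your high-level skeleton is right: verify strong separation, establish the modified transversality condition, feed this into the Ledrappier--Young machinery of \S\ref{sec:transversality}, and use an ergodic equilibrium state at $s=\dimaff\Av$ to upgrade from measures to sets. The strong-separation step and the upper bound $\udimm E_{\Av,\tv}\le\dimaff\Av$ are correct and match the paper. But the central step---how the ``almost every $\Av$'' conclusion is actually extracted---is not what the paper does, and as you describe it, it would not work.

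The gap is in the transversality mechanism. You propose to freeze all matrices except one, $A_{i_{\ell+1}}$, and run Fubini over its four entries, showing that the projected distance $|\proj_V(\pi_{\Av,\tv}(\iiv)-\pi_{\Av,\tv}(\jjv))|$ has a non-degenerate dependence on those entries. The difficulty is that the modified transversality condition of \S\ref{sec:transversality} is formulated for a parameter group $\mathcal{U}$ acting on the \emph{translation vectors}, with $\Av$ held fixed; the right-hand side of the defining inequality contains $\alpha_i(\proj_V A_{\iiv\wedge\jjv})$, the Lyapunov exponents, $\diml\nu$, the Furstenberg measure, etc., all of which vary with $\Av$. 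If you let $\Av$ itself be the parameter, the estimate becomes self-referential, and Proposition~\ref{thm:projection} and Theorem~\ref{thm:translationtransversal} no longer apply as written because the Furstenberg measure and the Lyapunov dimension move underneath you. The paper's solution, which is the actual content of the planar case, is threefold and you do not use any of it: (i) the parameter family is $\mathcal{U}=SO(2)$ acting by $u_\alpha$ on the $v_i$'s (Lemma~\ref{lem:trans} and Proposition~\ref{lem:trans2}), a one-dimensional family whose transversality is proved via the Pythagoras estimate $|v_{i_1}-v_{j_1}|^2\le 2(\|A_{i_1}\|+\|A_{j_1}\|)^2 R^2$, which is exactly where the threshold $\sqrt{2}/2$ comes from; (ii) Lemma~\ref{lem:equivalence} shows that rotating the translations by $u_\alpha$ is IFS-equivalent to conjugating each $A_i$ by $u_\alpha$, so the rotation family does move through matrix space while preserving all singular values, Lyapunov exponents, and Furstenberg data; and (iii) the Haar measure $\mathcal{N}$ on $GL_2(\R)^N$ is invariant under simultaneous $SO(2)$-conjugation, so by Rokhlin's disintegration it decomposes along the $SO(2)$-orbits, and the almost-every-$u_\alpha$ conclusion lifts to an almost-every-$\Av$ conclusion (Theorems~\ref{thm:aeformeasure} and~\ref{thm:almosteverymatrix}). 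Your plan bypasses all of this. In particular the claim that the derivative ``is non-degenerate owing to the strict inequality in \eqref{eq:matricesplanar}'' is not substantiated: that strict inequality controls a single rotational derivative via Lemma~\ref{lem:trans}, not the full $4$-dimensional derivative with respect to matrix entries, which is a genuinely different and harder object.

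There is also a secondary gap in the set-dimension statement. You apply the argument directly to an ergodic $s$-equilibrium state, but Lemma~\ref{lem:safreg} only gives the Ledrappier--Young formula for Bernoulli measures with simple Lyapunov spectrum or for quasi-Bernoulli measures under totally dominated splitting; the equilibrium state need not be covered a priori. The paper handles this by (a) disposing of the degenerate case $\chi_1=\chi_2$ via Feng--Hu and restricting to $\mathbb{A}$, and (b) proving the sub-system approximation of Proposition~\ref{prop:FengShmerkin} so that the equilibrium state can be approximated from below by step-$n$ Bernoulli measures with simple spectrum, which is the content of Theorem~\ref{thm:almosteverymatrix}. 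Without this, the step from the measure statement to $\dimh E_{\Av,\tv}\ge\dimaff\Av$ does not go through.
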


Note that matrix tuples in $\mathcal{A}_{\tv}$ are contractive. In higher dimensions, our result is the following.

\begin{maintheorem}\label{thm:main}
  Suppose that $d\in \N$ is such that $d\ge 3$, $\tv=(v_1,\dots, v_N)\in(\R^d)^N$ is such that $v_i\neq v_j$ for $i\neq j$, and
  \begin{equation}\label{eq:matrices}
    \mathcal{A}_{\tv}'=\biggl\{\Av\in GL_d(\R)^N : 0<\max_{i\neq j}\frac{\|A_i\|+\|A_j\|}{|v_i-v_j|}\cdot\frac{\|\tv\|}{1-\|\Av\|}<\frac{2}{\sqrt{3}}-1\biggr\}.
  \end{equation}
  Then for every probability vector $\pv \in (0,1)^N$ the corresponding self-affine measure $\mu=\mu_{\Av,\tv,\pv}$ satisfies
  $$
    \dim\mu=\diml\mu=\min_{k\in\{0, \dots, d\}}\biggl\{k + \frac{h_{\pv}-\sum_{j=1}^{k} \chi_j(\Av,\pv)}{\chi_{k+1}(\Av,\pv)},d\biggr\}
  $$
  for $\mathcal{L}^{d^2N}$-almost all $\Av\in\mathcal{A}_{\tv}'$. Moreover,
  $$
    \dimh\mu_{\Av}=\dimh E_{\Av,\tv}=\udimm E_{\Av,\tv}=\dimaff\Av
  $$
  for $\mathcal{L}^{d^2N}$-almost all $\Av\in\mathcal{A}_{\tv}'\cap\mathcal{D}$, where $\mathcal{D}$ is as in \eqref{eq:d} and $\mu_{\Av}$ is an ergodic $s$-equilibrium state of $\Av$ for $s = \dimaff\Av$.
\end{maintheorem}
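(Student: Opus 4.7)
The plan is to derive Theorem \ref{thm:main} from the general framework of \S \ref{sec:transversality}--\S \ref{sec:sub-systems} by verifying the \emph{modified transversality condition} on the open set $\mathcal{A}_{\tv}'$, and combining it with the Ledrappier--Young theorems for self-affine measures of \cite{BaranyKaenmaki}. The measure statement is proved first, without invoking any splitting assumption; the set statement then follows by specialising the measure statement to an ergodic $s$-equilibrium state with $s=\dimaff\Av$, for which the Ledrappier--Young formula is presently guaranteed only on $\mathcal{D}$.

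For the measure statement, the first step is to verify the modified transversality condition in an $\mathcal{L}^{d^2N}$-positive neighbourhood of every $\Av\in\mathcal{A}_{\tv}'$. Using \eqref{eq:natproj} together with the identity $\pi_{\Av,\tv}(\iii)=v_{i_1}+A_{i_1}\pi_{\Av,\tv}(\sigma\iii)$, for distinct cylinders starting with $i\ne j$ the difference $\pi_{\Av,\tv}(\iii)-\pi_{\Av,\tv}(\jjv)$ has leading term $v_i-v_j$ and tail bounded in norm by $(\|A_i\|+\|A_j\|)\|\tv\|/(1-\|\Av\|)$. The quantitative constant $\frac{2}{\sqrt{3}}-1$ in \eqref{eq:matrices} is what is needed to convert this geometric bound into the $\phi^s$-summable transversality estimate demanded by \S \ref{sec:transversality}: the angular deviation of $\pi_{\Av,\tv}(\iii)-\pi_{\Av,\tv}(\jjv)$ from $v_i-v_j$ must remain controlled as $\Av$ varies in a small box, so that the Jacobian of $\Av\mapsto\pi_{\Av,\tv}(\iii)-\pi_{\Av,\tv}(\jjv)$ is nondegenerate along sufficiently many directions. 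Once transversality is in place, the general results of \S \ref{sec:transversality} applied to the Bernoulli measure $\nu_{\pv}$, combined with the Ledrappier--Young formula and exact-dimensionality of \cite[Theorem~2.3]{BaranyKaenmaki} (valid whenever the Lyapunov exponents are simple, a full-measure condition on $\mathcal{A}_{\tv}'$), yield $\dim\mu_{\Av,\tv,\pv}=\diml\mu_{\Av,\tv,\pv}$, with the explicit formula coming from the definition of the Lyapunov dimension.

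For the set statement I restrict to $\Av\in\mathcal{A}_{\tv}'\cap\mathcal{D}$. By \cite[Theorems~2.6 and 4.1]{Kaenmaki04}, for $s=\dimaff\Av$ there exists a quasi-Bernoulli ergodic $s$-equilibrium state $\mu_{\Av}$, and by \eqref{eq:eq-state} its Lyapunov dimension equals $\dimaff\Av$. On $\mathcal{D}$, \cite[Theorem~2.6]{BaranyKaenmaki} ensures that $\pi_{\Av,\tv}\mu_{\Av}$ is exact-dimensional and satisfies Ledrappier--Young, so the analogue of the measure argument for quasi-Bernoulli measures carried out in \S \ref{sec:sub-systems} yields $\dim\pi_{\Av,\tv}\mu_{\Av}=\dimaff\Av$ for $\mathcal{L}^{d^2N}$-a.e.\ $\Av\in\mathcal{A}_{\tv}'\cap\mathcal{D}$. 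Combined with the chain
\[
\dim\pi_{\Av,\tv}\mu_{\Av}\le\dimh E_{\Av,\tv}\le\udimm E_{\Av,\tv}\le\dimaff\Av,
\]
in which the last inequality is Falconer's classical upper bound \cite{Falconer88}, all four quantities coincide, which is the assertion of the theorem.

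The main obstacle is the quantitative verification of the modified transversality condition with the explicit constant $\frac{2}{\sqrt{3}}-1$. Unlike Theorem \ref{thm:mainplanar}, where the two-dimensional geometry admits the cleaner bound $\sqrt{2}/2$, in $\R^d$ with $d\ge3$ transversality must be checked along all linear sections of parameter space through which two cylinders can be separated, and the geometric loss due to the higher codimension of singular directions is what forces the more restrictive inequality in \eqref{eq:matrices}. A secondary subtlety is that the measure statement holds throughout $\mathcal{A}_{\tv}'$ whereas the set statement requires $\Av\in\mathcal{D}$, reflecting the current state of the Ledrappier--Young formula for equilibrium states.
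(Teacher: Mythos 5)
Your high‑level strategy (verify the modified transversality condition on $\mathcal{A}_{\tv}'$, invoke the Ledrappier--Young framework from \S\ref{sec:transversality}--\S\ref{sec:sub-systems}, and close the loop with Falconer's upper bound $\udimm E_{\Av,\tv}\le\dimaff\Av$) matches the paper's plan. However, there is a genuine gap in how you propose to verify transversality, and this gap is where the whole proof lives.

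You describe verifying transversality by controlling ``the Jacobian of $\Av\mapsto\pi_{\Av,\tv}(\iii)-\pi_{\Av,\tv}(\jjj)$'' as $\Av$ varies in a small box. This is not the mechanism the paper uses, and it is not obviously compatible with the abstract machinery you cite. The modified transversality condition is defined relative to a fixed parameter group $(\mathcal{U},m)$ of mappings acting on the \emph{translation vectors}, and Theorem~\ref{thm:aeformeasure} transfers conclusions from that parameter group to Lebesgue-a.e.\ matrix tuples via Rokhlin disintegration of $\mathcal{L}^{d^2N}$ along the orbits of $\Av\mapsto u(\Av)=(u^{-1}A_1u,\dots,u^{-1}A_Nu)$. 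The hinge is Lemma~\ref{lem:equivalence}: $\Fb_{u(\Av),\tv}$ is equivalent to $\Fb_{\Av,u(\tv)}$, so wiggling the translations by $u$ is the same as conjugating the matrices. In the higher-dimensional case the paper takes $\mathcal{U}=O(d)$ with its Haar measure $\Theta$, and the heart of the proof is Proposition~\ref{thm:higher-dim-trans}, whose verification depends on a maximal packing of $O(d)$ by small balls, the comparison of $w_{\iii,\jjj}(G)$ across nearby $G$, and the contraction estimate of Claim~\ref{claim3} relating the auxiliary rotations $H_k$ to $G_k$. Your sketch contains none of this, and a naive Jacobian argument directly on the $d^2N$-dimensional parameter $\Av$ would not obviously yield the product bound $\prod_{i=1}^{\dim V}\min\{1, t/\alpha_i(\proj_V A_{\iii\wedge\jjj})\}$ that the Ledrappier--Young step (Proposition~\ref{thm:projection}) needs for every proper subspace $V$.

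Your explanation for the constant $\frac{2}{\sqrt{3}}-1$ is also incorrect: it does not come from ``the higher codimension of singular directions.'' It is the precise threshold ensuring that the map $G\mapsto H_G$ in Claim~\ref{claim3} is a contraction, namely that
$$
\frac{4R(\|A_{i_n}\|+\|A_{j_n}\|)\bigl(|v_{i_n}-v_{j_n}|+(\|A_{i_n}\|+\|A_{j_n}\|)R\bigr)}{\bigl(|v_{i_n}-v_{j_n}|-(\|A_{i_n}\|+\|A_{j_n}\|)R\bigr)^2}<1,
$$
which, after normalising by $|v_{i_n}-v_{j_n}|$, reduces to $3x^2+6x-1<0$ with $x=(\|A_{i_n}\|+\|A_{j_n}\|)R/|v_{i_n}-v_{j_n}|$, and the positive root is $\frac{2}{\sqrt{3}}-1$. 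This contraction estimate is precisely where the restriction $d\ge 3$ enters (via the three-dimensional geometry of $\linspan\{w_k,w_h,e\}$), and it does not appear in your sketch at all.

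Two further omissions: you do not address why the Lyapunov spectrum is simple for $\mathcal{L}^{d^2N}$-a.e.\ $\Av$ and every Bernoulli measure, which the paper settles via a pinching-and-twisting argument (Lemma~\ref{lem:fullmeasureA}) so that Lemma~\ref{lem:safreg} applies; and for the set statement the paper passes through the sub-system/step-$n$ Bernoulli reduction of Theorem~\ref{thm:almosteverymatrix} (Proposition~\ref{prop:FengShmerkin}), not merely the quasi-Bernoulli property of the equilibrium state. Your citation of \cite[Theorems 2.6 and 4.1]{Kaenmaki04} for quasi-Bernoullianity is also misplaced: this is Lemma~\ref{lem:quasi}, which combines the Gibbs property from \cite{KaenmakiBing} with \cite[Theorem~B]{BochiGourmelon} and \cite[Lemma~2.1]{FengShmerkin14}, and it holds only $\mathcal{L}^{d^2N}$-a.e.\ on $\mathcal{A}_{\tv}'\cap\mathcal{D}$.
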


Let us show that the equilibrium states in Theorem \ref{thm:main} are quasi-Bernoulli.

\begin{lemma}\label{lem:quasi}
  For $\mathcal{L}^{d^2N}$-almost every $\Av\in\mathcal A'_{\tv}\cap\mathcal{D}$ the unique $s$-equilibrium state of $\Av$ is quasi-Bernoulli for $s=\dimaff (\Av)$.
\end{lemma}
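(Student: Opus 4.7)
The plan is to deduce the quasi-Bernoulli property of the equilibrium state from the quasi-multiplicativity of the singular value function $\fii^s$ on $\mathcal{D}$, combined with standard thermodynamic formalism. The conclusion in fact holds for every $\Av \in \mathcal{A}'_{\tv} \cap \mathcal{D}$; the almost-everywhere qualifier is stated only to align with the setting of Theorem \ref{thm:main}.

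The core step is to verify that for every $\Av \in \mathcal{D}$ and every $s \in [0, d]$ there is a constant $c = c(\Av, s) > 0$ such that
\[
  \fii^s(A_{\iii\jjj}) \ge c\, \fii^s(A_\iii)\,\fii^s(A_\jjj) \qquad \text{for all } \iii, \jjj \in \Sigma_*.
\]
This is a consequence of the dichotomy built into $\mathcal{D}$: at each level $i \in \{1, \ldots, d-1\}$, either $\alpha_{i+1}(A_\iii)/\alpha_i(A_\iii)$ decays exponentially uniformly in $\iii$, producing continuous, uniformly transverse $i$-dimensional dominated subspaces, or the ratio stays uniformly bounded below, in which case $\alpha_i \asymp \alpha_{i+1}$ and the two levels can be grouped together. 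The first case yields $\|\wedge^i A_{\iii\jjj}\| \asymp \|\wedge^i A_\iii\|\|\wedge^i A_\jjj\|$, i.e.\ quasi-multiplicativity of the integer singular value function $\fii^i$; combining with the block structure produced by the second case and interpolating via $\fii^s = \fii^{\lfloor s\rfloor}\alpha_{\lceil s\rceil}^{s-\lfloor s\rfloor}$ gives quasi-multiplicativity of $\fii^s$ for every $s \in [0, d]$.

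Once quasi-multiplicativity is established, classical thermodynamic-formalism arguments (compare \cite{Kaenmaki04} and \eqref{eq:eq-state}) provide uniqueness of the $s$-equilibrium state $\mu_{\Av}$ together with the Gibbs-type estimate
\[
  \mu_{\Av}([\iii]) \asymp \fii^s(A_\iii)\exp(-|\iii|P_{\Av}(s)).
\]
For $s = \dimaff(\Av) \le d$ we have $P_{\Av}(s) = 0$, so $\mu_{\Av}([\iii]) \asymp \fii^s(A_\iii)$. Combining with quasi-multiplicativity then yields
\[
  \mu_{\Av}([\iii\jjj]) \asymp \fii^s(A_{\iii\jjj}) \asymp \fii^s(A_\iii)\fii^s(A_\jjj) \asymp \mu_{\Av}([\iii])\mu_{\Av}([\jjj]),
\]
which is exactly the quasi-Bernoulli property. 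The main technical obstacle is the quasi-multiplicativity of $\fii^s$ for non-integer $s$; this rests on the continuous dependence and uniform transversality of the dominated splittings, which are supplied by the Bochi--Gourmelon theory \cite{BochiGourmelon} underlying the openness of $\mathcal{D}$.
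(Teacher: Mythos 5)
Your proof is correct and rests on the same central ingredient as the paper's: quasi-multiplicativity of the singular value function $\phi^s$ on $\mathcal{D}$, which both you and the paper ultimately trace to Bochi--Gourmelon and Feng--Shmerkin (\cite[Theorem~B]{BochiGourmelon}, \cite[Lemma~2.1]{FengShmerkin14}). The difference is in how the uniqueness and Gibbs property of the equilibrium state are obtained. The paper invokes an independent almost-everywhere result (\cite[Propositions 3.4 and 3.6]{KaenmakiBing}), which gives $\mu([\iii]) \asymp \phi^s(A_\iii)$ for $\mathcal{L}^{d^2 N}$-almost every $\Av \in GL_d(\R)^N$, and then combines it with the quasi-multiplicativity valid on all of $\mathcal{D}$. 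You instead derive uniqueness and the Gibbs estimate directly from quasi-multiplicativity via subadditive thermodynamic formalism, which removes any appeal to an a.e.\ statement; consequently your argument actually yields the conclusion for \emph{every} $\Av \in \mathcal{A}_{\tv}' \cap \mathcal{D}$, not merely almost every, a mild but genuine strengthening of the lemma as stated. The trade-off is that your path is longer and requires the reader to accept the thermodynamic-formalism input (Gibbs property from quasi-multiplicativity), whereas the paper's argument is a two-line citation chain. One small caveat: your sketch of quasi-multiplicativity of $\phi^s$ for non-integer $s$ by interpolation through $\phi^s = \phi^{\lfloor s \rfloor}\alpha_{\lceil s \rceil}^{s - \lfloor s \rfloor}$ is only heuristic as written and really must be anchored to \cite[Lemma~2.1]{FengShmerkin14} (as you acknowledge at the end), since the non-integer case is exactly where care is needed about which dichotomy branch of \eqref{eq:domsplit} holds at levels $\lfloor s \rfloor$ and $\lceil s \rceil$.
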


\begin{proof}
  By \cite[Propositions 3.4 and 3.6]{KaenmakiBing}, for $\mathcal{L}^{d^2N}$-almost every $\Av\in GL_d(\R)^N$, the $s$-equilibrium state of $\Av$ for $s=\dimaff\Av$ is unique and satisfies the following Gibbs property: there exists a constant $C \ge 1$ such that
  $$
    C^{-1}\phi^s(A_{\ii})\leq\mu([\ii])\leq C\phi^s(A_{\ii})
  $$
  for all $\ii\in\Sigma_*$.
  By \cite[Theorem~B]{BochiGourmelon} and \cite[Lemma~2.1]{FengShmerkin14}, for each $\Av\in\mathcal{D}$, there exists a constant $C'>0$ such that
  $$
    C'\phi^s(A_{\ii})\phi^s(A_{\jj})\leq\phi^s(A_{\ii\jj})
  $$
  for all $\ii,\jj\in\Sigma_*$. The statement of the lemma follows.
\end{proof}

\begin{remark} \label{rem:diff-proofs}
  We emphasize that the methods used to prove Theorems \ref{thm:mainplanar} and \ref{thm:main} are significantly different. At first, the higher dimensional exact
  dimensionality result for quasi-Bernoulli measures (the Ledrappier-Young formula to be more precise) of B\'ar\'any and K\"aenm\"aki \cite{BaranyKaenmaki} requires the totally dominated splitting condition, which is why we restrict our matrix tuples to the set $\mathcal{D}$. Very recently, Feng [personal communication] has informed the authors that the Ledrappier-Young formula holds also without totally dominated splitting. By relying on this, one could improve Theorem \ref{thm:main} by replacing $\mathcal{A}_{\tv}' \cap \mathcal{D}$ by $\mathcal{A}_{\tv}'$.
  Secondly, the transversality argument used in the higher dimensional case is different from the two-dimensional case. Curiously, the higher
  dimensional transversality argument requires the dimension to be at least three, so it cannot be used in the two-dimensional case. This difference is also the reason
  why we use different upper bounds in the definitions of $\mathcal{A}_{\tv}$ and $\mathcal{A}_{\tv}'$. Currently we do not know if the upper bound $2/\sqrt{3}-1$ used
  in \eqref{eq:matrices} can be replaced by the upper bound $\sqrt{2}/2$ used in \eqref{eq:matricesplanar}. The sharpness of the methods used in our proofs is discussed in
  Remark \ref{ex:nonsharpness}.
\end{remark}

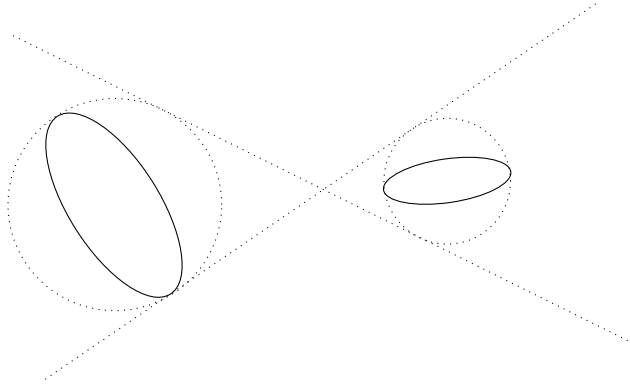
\begin{figure}[t]
  \begin{tikzpicture}[scale=1.8]
    \draw[dotted] (2.4600934554196052,4.059391729465462)-- (6.954596872214569,1.8121400210679846);
    \draw[dotted] (2.6950056607262165,1.5286867989015813)-- (6.729871701374821,4.301403451908268);
    \draw[dotted] (3.20341517435108,2.8147600140453206) circle (0.781178222586814cm);
    \draw[dotted] (5.632925394859022,2.987228510882403) circle (0.46280121042343775cm);
    \draw [rotate around={8.199457513440423:(5.6324828428403855,2.9902998151234286)}] (5.6324828428403855,2.9902998151234286) ellipse (0.47005038392683764cm and 0.16116465859330437cm);
    \draw [rotate around={-57.828635740821696:(3.1969078950561722,2.8106667021345255)}] (3.1969078950561722,2.8106667021345255) ellipse (0.7741530957060199cm and 0.33042905168588915cm);
  \end{tikzpicture}
  \caption{The condition \eqref{eq:matricesplanar} requires that the images of the unit ball under $f_i$ are separated away from each other as in the picture: if the
  ellipses are rotated around their centers, they still stay inside a cone having an angle at most $\pi/2$.}
  \label{fig:illustration}
\end{figure}

\begin{remark} \label{rem:remark2.2}
  Many of the recent works on dimensions of self-affine sets and measures (see e.g.\ \cite{BaranyKaenmaki, barany2015dimension, MorrisShmerkin2016, Rapaport2015}) rely on properties of the
  Furstenberg measure (for definitions, see \S \ref{sec:transversality})  and on the exceptional sets of the dimension of orthogonal projections. We remark that the result of B\'ar\'any and Rams \cite{barany2015dimension} is the first result in the direction of almost every matrices. However, Theorem \ref{thm:mainplanar} covers situations that cannot be addressed by using this
  approach. Even though the condition \eqref{eq:matricesplanar} is rather restrictive (for example, we will see in Lemma \ref{lem:ssc-d} that it implies that the images
  $f_i(E)$ are disjoint, i.e.\ $\Fb_{\Av, \tv}$ satisfies the {\em strong separation condition}) Theorem \ref{thm:mainplanar} introduces a checkable condition for an affine
  iterated function system to satisfy the desired dimension result. This is in contrast to, for example, \cite[Corollaries 2.7 and 2.8]{BaranyKaenmaki} where the claim for
  the self-affine measure $\mu$ in Theorem \ref{thm:mainplanar} holds provided that the strong separation condition holds and the dimension of $\mu$ does not drop when
  projected to orthogonal complements of Furstenberg typical lines. The condition \eqref{eq:matricesplanar} can be illustrated via Lemma \ref{lem:equivalence} as in
  Figure \ref{fig:illustration}.

  We will next exhibit an open set of matrices that satisfy the assumptions of Theorem \ref{thm:mainplanar}, but do not satisfy the projection condition of
  \cite{BaranyKaenmaki}. Consider an affine iterated function system consisting of three mappings $x \mapsto A_ix+v_i$. Let the translation vectors $v_i$ be
  equidistributed on the unit circle, that is, $|v_i|=1$ and $|v_i-v_j| = \sqrt{3}$ for all $i \ne j$. It is easy to see that if $\|A_i\|<\sqrt{6}/(4+\sqrt{6}) =: \eta$ for all $i$, then $\Av \in \mathcal{A}_{\tv}$. Since $\eta > 1/3$ we find an open set of three matrices such that all elements are strictly positive, the Furstenberg measure is supported on a Cantor set having dimension less than $1/2$, and the affinity dimension is between $1$ and $3/2$. Recalling \cite[Corollary 2.9]{BaranyKaenmaki}, we see that this case does not satisfy the projection condition. Also, this example is outside of the scope of Morris and Shmerkin \cite[Theorems~1.2 and 1.3]{MorrisShmerkin2016}, since the affinity dimension is less than $3/2$ and, as the affinity dimension is strictly larger than the dimension of the Furstenberg measure, the bunching condition does not hold either. 
\end{remark}

In the next lemma, we note that the strong separation condition follows from the conditions \eqref{eq:matricesplanar} and \eqref{eq:matrices}. Denote by $O(d)$ the orthogonal
group of matrices $G\in GL_d(\R)$ with $G^TG=I$. For $G\in O(d)$, we denote the vector $(Gv_1, \dots, Gv_d)$ by $G(\tv)$.

\begin{lemma}\label{lem:ssc-d}
  Let $\tv = (v_1,\dots, v_N) \in (\R^d)^N$ be such that $v_i\neq v_j$ for $i\neq j$ and $\Av = (A_1,\dots,A_N) \in \mathcal{A}_{\tv} \cup \mathcal A_\tv'$. The affine iterated function system $\Fb_{\Av,G(\tv)}$ satisfies the strong separation condition for all $G\in O(d)$.
\end{lemma}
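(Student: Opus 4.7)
The plan is to show directly that the cylinder images $f_i(E_{\Av,G(\tv)})$ are pairwise disjoint, by enclosing each of them in an explicit ball and then using the hypothesis to force those balls apart. The key quantity will be $R=\|\tv\|/(1-\|\Av\|)$, which is orthogonally invariant.

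First I would use the standard estimate for the attractor of a contractive IFS: from the natural projection formula \eqref{eq:natproj} applied to the system $\Phi_{\Av,G(\tv)}$, every $x\in E_{\Av,G(\tv)}$ satisfies
\[
  |x|\le\sum_{k=1}^{\infty}\|A_{\bi|_{k-1}}\|\,|Gv_{i_k}|\le\sum_{k=1}^{\infty}\|\Av\|^{k-1}\|G(\tv)\|=\frac{\|G(\tv)\|}{1-\|\Av\|}.
\]
Since $G\in O(d)$ is an isometry, $\|G(\tv)\|=\max_i|Gv_i|=\max_i|v_i|=\|\tv\|$, so $E_{\Av,G(\tv)}\subseteq B(0,R)$ with $R$ as above (note that both $\mathcal A_\tv$ and $\mathcal A_\tv'$ consist of contractive tuples, so $1-\|\Av\|>0$).

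Next I would apply each $f_i$: for $x\in E_{\Av,G(\tv)}$ we have $f_i(x)-Gv_i=A_ix$, hence $|f_i(x)-Gv_i|\le\|A_i\|R$ and therefore
\[
  f_i(E_{\Av,G(\tv)})\subseteq B(Gv_i,\|A_i\|R).
\]
Two such balls are disjoint whenever $|Gv_i-Gv_j|>(\|A_i\|+\|A_j\|)R$. Using once more that $G$ is orthogonal, $|Gv_i-Gv_j|=|v_i-v_j|$, so disjointness reduces to
\[
  \frac{\|A_i\|+\|A_j\|}{|v_i-v_j|}\cdot\frac{\|\tv\|}{1-\|\Av\|}<1.
\]

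Finally, I would observe that both bounds appearing in \eqref{eq:matricesplanar} and \eqref{eq:matrices} are strictly less than $1$: indeed $\sqrt{2}/2<1$ and $2/\sqrt{3}-1<1$. Hence for every $\Av\in\mathcal A_\tv\cup\mathcal A_\tv'$ and every pair $i\ne j$ the displayed inequality holds, the enclosing balls are disjoint, and the strong separation condition follows for all $G\in O(d)$. There is no real obstacle here; the argument is a one-step estimate exploiting that the radius $R$ and the pairwise distances $|v_i-v_j|$ are both $O(d)$-invariant.
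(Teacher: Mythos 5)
Your proof is correct and uses essentially the same argument as the paper: both hinge on the bound $E_{\Av,G(\tv)}\subseteq B(0,\|\tv\|/(1-\|\Av\|))$ and the triangle inequality combined with the orthogonal invariance of $|v_i-v_j|$ and $\|\tv\|$. Your packaging via enclosing balls $B(Gv_i,\|A_i\|R)$ is an equivalent reformulation of the paper's pointwise estimate on $|\pi_{\Av,G(\tv)}(\bi)-\pi_{\Av,G(\tv)}(\bj)|$.
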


\begin{proof}
  By definition, it is easy to see that
  $$
    |\pi_{\Av, G(\tv)}(\ii)| \leq \frac{\|\tv\|}{1-\|\Av\|}
  $$
  for every $\ii\in\Sigma$ and $G\in O(d)$. If $\bi,\bj\in\Sigma$ are such that $\bi|_1\neq \bj|_1$, then, by either \eqref{eq:matricesplanar} or \eqref{eq:matrices},
  \begin{align*}
    |\pi_{\Av, G(\tv)}(\bi)-\pi_{\Av, G(\tv)}(\bj)|&\geq|Gv_{i_1}-Gv_{j_1}|-|A_{i_1}\pi_{\Av, G(\tv)}(\sigma(\bi))-A_{j_1}\pi_{\Av, G(\tv)}(\sigma(\bj))|\\
    &\ge |v_{i_1}-v_{j_1}|-(\|A_{i_1}\|+\|A_{j_1}\|)\frac{\|\tv\|}{1-\|\Av\|}>0.
  \end{align*}
  This shows that the strong separation condition holds in both cases, proving the claim.
\end{proof}

Very recently, after this article was finished, B\'ar\'any, Hochman, and Rapaport \cite{baranyhochmanrapaport} showed that the dimension of planar self-affine measures is equal to the Lyapunov dimension if the strong open set condition holds and the matrix tuple is strongly irreducible. %This result covers Theorem~\ref{thm:mainplanar}.
Furthermore, we point out that, under the assumption that $(v_1,\ldots,v_N)$ is linearly independent (which, in particular, forces $N\leq d$), one can prove the result of Theorem~\ref{thm:main} for every ergodic measure.

\begin{prop}\label{prop:report}
  Suppose that $\tv=(v_1,\ldots,v_N)\in(\R^d)^N$ is linearly independent and
  \begin{equation*}
    \mathcal{A}_t = \{(A_1,\ldots,A_N)\in GL_d(\R)^N:\|A_i\|<t\text{ for all }i \in \{1,\ldots,N\}\}
  \end{equation*}
  for all $t > 0$. Then every ergodic probability measure $\nu$ on $\Sigma$ satisfies
  \begin{equation*}
    \dim\pi_{\Av,\tv}\nu=\diml\nu=\min_{k\in\{0, \dots, d\}}\biggl\{k + \frac{h_{\pv}-\sum_{j=1}^{k} \chi_j(\Av,\nu)}{\chi_{k+1}(\Av,\nu)},d\biggr\}
  \end{equation*}
  for $\mathcal{L}^{d^2N}$-almost all $\Av \in \mathcal{A}_{1/2}$. Moreover,
  \begin{equation*}
    \dimh E_{\Av,\tv}=\udimm E_{\Av,\tv}=\dimaff\Av
  \end{equation*}
  for $\mathcal{L}^{d^2N}$-almost all $\Av \in \mathcal{A}_{1/2}$.
\end{prop}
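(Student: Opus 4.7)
The plan is to adapt the strategy used to prove Theorem \ref{thm:main}: combine the Ledrappier-Young formula with a verification of the modified transversality condition introduced in \S \ref{sec:transversality}. The linear independence of $\tv$ replaces the geometric condition used to define $\mathcal{A}_\tv'$, and because it produces transversality uniformly over every pair of distinct words regardless of their combinatorics, it permits the stronger conclusion that the result holds for \emph{every} ergodic measure rather than only for quasi-Bernoulli ones. In particular, the totally dominated splitting assumption becomes unnecessary.

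First, I would invoke the Ledrappier-Young formula for general ergodic measures. The extension of \cite[Theorem~2.3]{BaranyKaenmaki} announced by Feng (see Remark \ref{rem:diff-proofs}) states that for every ergodic probability $\nu$ on $\Sigma$ the pushforward $\pi_{\Av,\tv}\nu$ is exact-dimensional with a Ledrappier-Young type decomposition in terms of the Lyapunov exponents of $\nu$ and the dimensions of its conditional measures on the Oseledets flags. As in \S \ref{sec:transversality}, reducing $\dim\pi_{\Av,\tv}\nu=\diml\nu$ to a transversality condition is then automatic.

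Second, I would verify the modified transversality condition. For distinct $\bi,\bj\in\Sigma$ with $\kkk=\bi\wedge\bj$ of length $n$ and $i_{n+1}\neq j_{n+1}$,
\begin{equation*}
\pi_{\Av,\tv}(\bi)-\pi_{\Av,\tv}(\bj) = A_\kkk\bigl((v_{i_{n+1}}-v_{j_{n+1}}) + A_{i_{n+1}}\pi(\sigma^{n+1}\bi) - A_{j_{n+1}}\pi(\sigma^{n+1}\bj)\bigr),
\end{equation*}
and linear independence of $(v_1,\ldots,v_N)$ guarantees $v_{i_{n+1}}-v_{j_{n+1}}\neq 0$. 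The target is an estimate of the form
\begin{equation*}
\mathcal{L}^{d^2N}\bigl(\{\Av\in\mathcal{A}_{1/2}:|\pi_{\Av,\tv}(\bi)-\pi_{\Av,\tv}(\bj)|<r\}\bigr) \le C\, r^d\, \phi^s(A_\kkk)^{-1}
\end{equation*}
for all $s$ strictly below $\dimaff\Av$. I would obtain this by perturbing the entries of $A_{i_{n+1}}$ (or symmetrically $A_{j_{n+1}}$) along directions for which the induced change in the bracketed vector is non-degenerate in all $d$ coordinates. Linear independence of $v_1,\ldots,v_N$ guarantees, uniformly in $\bi,\bj$, that such a $d$-dimensional family of perturbations exists, while the reappearances of $A_{i_{n+1}}$ in the tail only produce higher-order self-perturbations which are geometrically damped by $\|A_{i_{n+1}}\|<1/2$, exactly as in Falconer's original argument \cite{Falconer88} for generic translations. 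A Fubini argument together with submultiplicativity of $\fii^s$ and the bound $|A_\kkk x|\ge\mathfrak{m}(A_\kkk)|x|$ would then convert this pointwise transversality into the required integral estimate.

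Finally, combining the Ledrappier-Young formula with the transversality estimate along the lines of \S \ref{sec:transversality} gives $\dim\pi_{\Av,\tv}\nu=\diml\nu$ for $\mathcal{L}^{d^2N}$-almost every $\Av\in\mathcal{A}_{1/2}$. The statement $\dimh E_{\Av,\tv}=\udimm E_{\Av,\tv}=\dimaff\Av$ follows by applying the measure result to an ergodic $s$-equilibrium state for $s=\dimaff\Av$, whose existence is guaranteed by \cite[Theorems 2.6 and 4.1]{Kaenmaki04} and which realises the maximal Lyapunov dimension via \eqref{eq:eq-state}; the upper bound $\udimm E_{\Av,\tv}\le\dimaff\Av$ is classical and holds for every $\Av$. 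The main technical obstacle I expect is establishing the transversality estimate uniformly over all pairs $\bi,\bj$: the perturbation of $A_{i_{n+1}}$ not only shifts $v_{i_{n+1}}-v_{j_{n+1}}$ but also moves $\pi(\sigma^{n+1}\bi)$ and $\pi(\sigma^{n+1}\bj)$ whenever the symbol $i_{n+1}$ or $j_{n+1}$ reappears further down the words, and quantitatively controlling these higher-order corrections by the leading linear contribution is the delicate point where the bound $\|A_i\|<1/2$ is essential.
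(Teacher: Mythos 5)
Your proposal takes a genuinely different route from the paper, but it has two substantive gaps that prevent it from being a proof.

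The paper's argument for Proposition~\ref{prop:report} does not use the modified transversality machinery of \S\ref{sec:transversality} at all. It starts from the Jordan--Pollicott--Simon theorem (\cite[Theorem~1.9]{JordanPollicottSimon07}), which for each fixed $\Av\in\mathcal{A}_{1/2}$ gives a full-measure set of translations $\tv$ with $\dim\pi_{\Av,\tv}\nu=\diml\nu$, and then applies Fubini on $(\R^d)^N\times\mathcal{A}_{1/2}$ to obtain a full-measure set $X\subset(\R^d)^N$ of translations that are good for almost every $\Av$. The linear independence of $\tv$ is used for a conjugation transfer: any linearly independent $\tv'\in X$ satisfies $\tv'=B(\tv)$ for a unique $B\in GL_d(\R)$, and Lemma~\ref{lem:equivalence} gives $\dim\pi_{B^{-1}(\Av),\tv}\nu=\dim\pi_{\Av,B(\tv)}\nu$. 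Choosing $B$ arbitrarily close to the identity (possible because $X$ is dense) converts the statement for $B(\tv)$ into the statement for $\tv$. This argument needs neither the Ledrappier--Young formula nor a transversality verification, which is precisely why it works for every ergodic $\nu$.

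The first gap in your proposal: you invoke a Ledrappier--Young formula for \emph{every} ergodic measure without totally dominated splitting, citing the Feng announcement of Remark~\ref{rem:diff-proofs}. That remark only claims the formula without the splitting assumption for the (quasi-)Bernoulli setting, and the paper deliberately chooses not to rely on it. Your route would therefore need a strictly stronger exact-dimensionality input than anything the paper has available, whereas the potential-theoretic argument in \cite{JordanPollicottSimon07} already applies to arbitrary ergodic $\nu$.

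The second gap: the proposed transversality estimate
\[
\mathcal{L}^{d^2N}\bigl(\{\Av\in\mathcal{A}_{1/2}:|\pi_{\Av,\tv}(\bi)-\pi_{\Av,\tv}(\bj)|<r\}\bigr) \le C\, r^d\, \phi^s(A_{\kkk})^{-1}
\]
is not well-posed as written, since the right-hand side depends on the variable of integration through $A_{\kkk}$, and the difficult step --- controlling the self-perturbations arising when the symbol $i_{n+1}$ or $j_{n+1}$ reappears in the tails --- is exactly the obstacle you flag but do not resolve. Falconer's bound $\|A_i\|<1/2$ tames the feedback when the \emph{translations} are the parameters and $\pi$ depends on them affinely; when the \emph{matrices} are the parameters, $\pi_{\Av,\tv}$ depends on $\Av$ nonlinearly through infinite products, and the paper handles this only by restricting the parameter family to conjugation by $SO(2)$ or $O(d)$ (Propositions~\ref{lem:trans2} and~\ref{thm:higher-dim-trans}), together with a Rokhlin disintegration of the Lebesgue measure along conjugacy orbits. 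Perturbing raw matrix entries as you propose would require a genuinely new argument rather than an adaptation of \cite{Falconer88}. You have, in effect, rediscovered the difficulty that Proposition~\ref{prop:report} was designed to avoid.
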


\begin{proof}
  By Jordan, Pollicott, and Simon \cite[Theorem~1.9]{JordanPollicottSimon07}, for every $\Av\in\mathcal{A}_{1/2}$ there exists $X_\Av\subset(\R^d)^N$ such that
  $$
    \mathcal{L}^{dN}((\R^d)^N\setminus X_{\Av})=0,
  $$
  and for every $\tv\in X_\Av$, $\dim\pi_{\Av,\tv}\nu=\diml\nu$. Thus, by applying Fubini's Theorem on the space $(\R^d)^N\times\mathcal{A}_{1/2}$ with the measure $\mathcal{L}^{dN}\times\mathcal{L}^{d^2N}$, there exists $X\subset(\R^d)^N$ such that $\mathcal{L}^{dN}((\R^d)^N\setminus X)=0,$ and for every $\tv\in X$ there exists $Y_\tv\subset\mathcal{A}_{1/2}$ such that $\mathcal{L}^{d^2N}(\mathcal{A}_{1/2}\setminus Y_\tv)=0$, and for every $\Av\in Y_\tv$, $\dim\pi_{\Av,\tv}\nu=\diml\nu$.

  Let $\tv=(v_1,\ldots,v_N)$ be linearly independent. Thus, for any linearly independent $\tv'\in(\R^d)^N$, there exists a unique $B\in GL_d(\R)$ such that $B(\tv)=(Bv_1,\ldots,Bv_N)=\tv'$. Moreover,
  $$
    \dim\pi_{B^{-1}(\Av),\tv}\nu=\dim\pi_{\Av,B(\tv)}\nu,
  $$
  for all $\Av\in GL_d(\R^d)^N$, where $B^{-1}(\Av)=(B^{-1}A_1B,\ldots,B^{-1}A_NB)$. To verify this claim, consult Lemma~\ref{lem:equivalence}. It is easy to see that $B^{-1}\colon GL_d(\R^d)^N \to GL_d(\R^d)^N$ is a bi-Lipschitz mapping and that
  $$
    \mathcal{A}_{\alpha_d(B)t/\alpha_1(B)}\subset B^{-1}(\mathcal{A}_t)\subset \mathcal{A}_{\alpha_1(B)t/\alpha_d(B)}.
  $$
  Hence, for every $\tv'\in X$ there exists $B\in GL_d(\R^d)$ such that $B(\tv)=\tv'$ and thus, for every $\Av\in Y_{B(\tv)}$ it holds that
  $$
    \dim\pi_{B^{-1}(\Av),\tv}\nu=\diml\nu.
  $$
  In particular, $\dim\pi_{\Av,\tv}\nu=\diml\nu$ for $\mathcal{L}^{d^2N}$-almost all $\Av\in\mathcal{A}_{\alpha_d(B)/(2\alpha_1(B))}$. Since $X$ has full measure, and therefore is dense, there exists a sequence of such $B$'s converging to the identity, which implies the first half of the assertion. The second half follows by a similar argument.
\end{proof}

\section{Modified self-affine transversality}\label{sec:transversality}

This section is devoted to proving that for measures, which satisfy the Ledrappier-Young formula, the dimension of the measure is equal to the Lyapunov dimension for almost every matrix tuple whenever the modified self-affine transversality condition, defined below, holds.

Let us denote the Grassmannian of $k$-planes in $\R^d$ by $G(d,k)$.  If $V \in G(d,k)$, then $V^\bot \in G(d,d-k)$ is the subspace orthogonal to $V$.  For $A\in GL_d(\R)$ let $\|A|V\|$ be the operator norm of $A$ restricted to $V$ defined by $\|A|V\|=\sup_{v\in V}|Av|/|v|$ and $\mathfrak{m}(A|V)$ the mininorm of $A$ restricted to $V$ defined by $\mathfrak{m}(A|V)=\inf_{v\in V}|Av|/|v|$.

Let $\nu$ be an ergodic $\sigma$-invariant measure on $\Sigma$ and let $\Av=(A_1,\dots,A_N) \in GL_d(\R)^N$. We say that $\mu_F^{d,k}$ is the \emph{$(d,k)$-Furstenberg-measure} with respect to $\nu$ and $\Av$ if $\mu_F^{d,k} \times \nu$ is an ergodic $T$-invariant measure, where
$$
  T\colon G(d,k)\times\Sigma \to G(d,k)\times\Sigma, \quad (V,\ii) \mapsto (A_{i_0}^{-1}V,\sigma(\ii)),
$$
and furthermore,
\begin{equation}\label{eq:Furstlimit}
  \lim_{n\to\infty}\tfrac{1}{n}\log\mathfrak{m}(A_{i_n}^{-1}\cdots A_{i_0}^{-1}|V)=\chi_{d-k+1}(\Av,\nu)
\end{equation}
for $\mu_F^{d,k}\times\nu$-almost all $(V,\ii) \in G(d,k)\times\Sigma$.

\begin{lemma}\label{lem:projection_lyapunov}
  Let $\nu$ be an ergodic measure on $\Sigma$ and let $\Av = (A_1,\ldots,A_N) \in GL_d(\R)^N$ be a tuple of contractive matrices. Assume that the $(d,d-k)$-Furstenberg measure $\mu_F^{d,d-k}$ with respect to $\nu$ and $\mathbf{A}$ exists for some $k\in\{1,\dots,d-1\}$. Then
  \begin{equation*}
    -\lim_{n\to\infty}\tfrac{1}{n} \log \fii^s(\proj_{V^\perp}A_{\iii|_n}) = \sum_{j=1}^{\lfloor s \rfloor} \chi_j(\Av,\nu) + (s-\lfloor s \rfloor)\chi_{\lceil s \rceil}(\Av,\nu)
  \end{equation*}
  for all $0\le s\leq k$ and for $\mu_F^{d-k} \times \nu$-almost all $(V,\iii) \in G(d,d-k) \times \Sigma$.
\end{lemma}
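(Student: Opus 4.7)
My plan is to establish matching upper and lower bounds on the exponential rate of each singular value $\alpha_j(\proj_{V^\perp}A_{\iii|_n})$ for $j\in\{1,\ldots,k\}$, from which the claim for $\fii^s$ with $0\le s\le k$ will follow by the log-linear interpolation inherent in the definition of the singular value function; it therefore suffices to treat integer values $s=j$. For the upper bound, since $\proj_{V^\perp}$ is an orthogonal projection satisfying $\|\proj_{V^\perp}\|\le 1$, submultiplicativity of singular values gives $\alpha_j(\proj_{V^\perp}A_{\iii|_n})\le\alpha_j(A_{\iii|_n})$ for every $j$, and Oseledets' theorem in the form \eqref{eq:lyapexp} yields $-\liminf_n\tfrac1n\log\fii^j(\proj_{V^\perp}A_{\iii|_n})\ge\chi_1(\Av,\nu)+\cdots+\chi_j(\Av,\nu)$ for $\nu$-almost every $\iii$.

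For the matching lower bound I would fix an integer $j\in\{1,\ldots,k\}$ and invoke the exterior-algebra identity $\fii^j(M)=\|\wedge^jM\|$ together with the factorisation $\wedge^j(\proj_{V^\perp}A_{\iii|_n})=(\wedge^j\proj_{V^\perp})\circ(\wedge^jA_{\iii|_n})$. Writing $A_{\iii|_n}=U_n\Sigma_nV_n^T$ in singular-value form with left singular vectors $u_1^{(n)},\ldots,u_d^{(n)}$ and right singular vectors $v_1^{(n)},\ldots,v_d^{(n)}$, evaluation of $\wedge^j(\proj_{V^\perp}A_{\iii|_n})$ on the unit top singular multivector $v_1^{(n)}\wedge\cdots\wedge v_j^{(n)}$ gives
\begin{equation*}
  \fii^j(\proj_{V^\perp}A_{\iii|_n})\ \ge\ \fii^j(A_{\iii|_n})\cdot\bigl|\proj_{V^\perp}u_1^{(n)}\wedge\cdots\wedge\proj_{V^\perp}u_j^{(n)}\bigr|,
\end{equation*}
where the trailing factor equals the product of the cosines of the principal angles between $V^\perp$ and $\linspan(u_1^{(n)},\ldots,u_j^{(n)})$. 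The argument therefore reduces to showing that this factor decays at most subexponentially for $\mu_F^{d,d-k}\times\nu$-almost every $(V,\iii)$.

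This subexponential non-degeneracy of principal angles is the main obstacle, which I would extract from the defining limit \eqref{eq:Furstlimit}. The Courant--Fischer identity $\max_{\dim W=d-k}\mathfrak{m}(M|W)=\alpha_{d-k}(M)$ applied to $M=(A_{i_0}\cdots A_{i_n})^{-1}$, combined with the limit $\tfrac1n\log\mathfrak{m}(A_{i_n}^{-1}\cdots A_{i_0}^{-1}|V)\to\chi_{k+1}(\Av,\nu)$, forces $V$ to be asymptotically aligned with the top-$(d-k)$ right singular subspace of the backward cocycle, equivalently $V^\perp$ with the top-$k$ left singular subspace of $A_{i_0}\cdots A_{i_n}$. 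The extra factor $A_{i_0}$ merely perturbs the Grassmannian limit $L_k(\iii)$ supplied by Oseledets' theorem applied to the adjoint cocycle by a bounded invertible map, which for $\nu$-almost every $\iii$ leaves the resulting principal angles strictly below $\pi/2$. Combining the Oseledets regularity of this convergence (which provides cosines bounded below by $e^{-\varepsilon n}$ for every $\varepsilon>0$) with the ergodicity of the skew product $T$ on $\mu_F^{d,d-k}\times\nu$ delivers the matching lower bound for every integer $j$, and linear interpolation of $\log\fii^s$ on each $[j,j+1]\cap[0,k]$ extends the equality to all $0\le s\le k$.
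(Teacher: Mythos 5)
Your proposal takes a genuinely different route from the paper's. The paper establishes the \emph{exact} algebraic identity (derived through the Hodge star operator and Gram--Schmidt orthogonalisation)
\begin{equation*}
  \bigl\|\bigl((A_{\iii|_n})^{-1}\bigr)^{\wedge(d-k)}\big|\wedge^{d-k}V\bigr\| \;=\; \frac{\bigl\|\bigl((A_{\iii|_n})^T\bigr)^{\wedge k}\big|\wedge^{k}V^{\perp}\bigr\|}{|\det A_{\iii|_n}|},
\end{equation*}
which transports the growth rate on $V$ (supplied by \eqref{eq:Furstlimit} and Oseledets) onto the growth rate on $V^{\perp}$ with no error term whatsoever, and then pushes from $\ell=k$ down to $\ell<k$ by a squeeze argument. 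You instead establish the one-sided inequality $\fii^{j}(\proj_{V^{\perp}}A_{\iii|_n})\ge\fii^{j}(A_{\iii|_n})\cdot\|\proj_{V^{\perp}}u_1^{(n)}\wedge\cdots\wedge\proj_{V^{\perp}}u_j^{(n)}\|$ and then must control the trailing cosine factor. The two are really different in spirit: the paper's identity needs no geometric control on singular vectors, while you trade exactness for an inequality with a potentially decaying error factor.

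The genuine gap is at the step you yourself call ``the main obstacle''. The claim that the cosine factor decays slower than $e^{-\eps n}$ for every $\eps>0$ does not follow by citing ``Oseledets regularity''. You are right that the subspaces $\linspan(u_1^{(n)},\ldots,u_k^{(n)})$ converge (they are the top-$k$ \emph{right} singular subspaces of the forward cocycle generated by the $A_i^{-1}$), but Oseledets says nothing about how the Furstenberg-typical $V^{\perp}$ sits relative to that limit: that alignment has to be extracted from \eqref{eq:Furstlimit}, and turning the asymptotic rate equality $\tfrac1n\log\mathfrak m(A_{\iii|_n}^{-1}|V)\to\chi_{k+1}$ into a quantitative subexponential bound on principal angles is a nontrivial step that you only gesture at via Courant--Fischer. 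One must actually rule out an exponential collapse of the angle by comparing the contributions from the fast and slow singular directions, and that comparison is only conclusive when $\chi_k(\Av,\nu)<\chi_{k+1}(\Av,\nu)$; if $\chi_k=\chi_{k+1}$ the top-$(d-k)$ singular subspace of $A_{\iii|_n}^{-1}$ need not converge and the rate argument degenerates, so a separate device is needed. Since the lemma carries no simplicity assumption on the Lyapunov spectrum, your proof as outlined does not cover all cases; the paper's Hodge-star duality sidesteps all of this by producing an equality rather than an estimate.
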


\begin{proof}
  Notice that if $A \in GL_d(\R)$ and $\ell<s\le \ell+1$, then $\phi^s(A)=\|A^{\wedge \ell}\|^{\ell+1-s}\|A^{\wedge(\ell+1)}\|^{s-\ell}$ and $\phi^s(A)=\phi^s(A^T)$; for example, see \cite[\S 3.4]{KaenmakiMorris2016}. Thus, it is enough to show that
  \begin{equation*} %\label{eq:exterlim}
    -\lim_{n\to\infty}\tfrac{1}{n}\log\| ((A_{\iii|_n})^T\proj_{V^\perp})^{\wedge\ell} \|=\sum_{j=1}^{\ell} \chi_j(\Av,\nu)
  \end{equation*}
  and the corresponding limit for $\ell+1$ hold
  for $\mu_F^{d-k} \times \nu$-almost all $(V,\iii) \in G(d,d-k) \times \Sigma$. It is easy to see that $(\proj_{V^{\perp}})^{\wedge\ell}=\proj_{\wedge^{\ell}V^{\perp}}$ and $\| ((A_{\iii|_n})^T)^{\wedge \ell}\proj_{\wedge^{\ell}V^{\perp}} \|=\|((A_{\iii|_n})^T)^{\wedge\ell}|\wedge^{\ell}V^{\perp}\|$. Since $k \ge \ell+1$, it suffices to show that
  \begin{equation}\label{eq:exterlim}
    -\lim_{n\to\infty}\tfrac{1}{n}\log\| ((A_{\iii|_n})^T)^{\wedge k}|\wedge^{k}V^{\perp}\|=\sum_{j=1}^{k} \chi_j(\Av,\nu)
  \end{equation}
  for $\mu_F^{d-k} \times \nu$-almost all $(V,\iii) \in G(d,d-k) \times \Sigma$. By the Oseledets' decomposition, this implies that the corresponding limit holds for all values in $\{1,\ldots,k\}$.

  We denote the Hodge star operator between $\wedge^k\R^d$ and $\wedge^{d-k}\R^d$ by $*$. Let $\{e_1,\ldots,e_d\}$ be the standard orthonormal basis of $\R^d$. The operator $*$ is the bijective linear map satisfying
  \begin{equation*}
    *(e_{i_1} \wedge \cdots \wedge e_{i_k}) = \sgn(i_1,\ldots,i_d) e_{i_{k+1}} \wedge \cdots \wedge e_{i_d}
  \end{equation*}
  for all $1 \le i_1 < \cdots < i_k \le d$, where $1 \le i_{k+1} < \cdots < i_d \le d$ are such that $\{ i_{k+1},\ldots,i_d \} = \{1,\ldots,d\} \setminus \{i_1,\ldots,i_k\}$, and $\sgn(i_1,\ldots,i_d) = 1$ if $(i_1,\ldots,i_d)$ is an even permutation of $\{1,\ldots,d\}$ and $\sgn(i_1,\ldots,i_d) = -1$ otherwise. Recall that the inner product on $\wedge^k\R^d$ is defined by setting $\la v,w \ra = *(v \wedge *w)$ for all $v,w \in \wedge^k\R^d$. It is straightforward to see that $*(*v) = (-1)^{k(d-k)}v$ and $v \wedge *w = (-1)^{k(d-k)} *\!v \wedge w$ for all $v,w \in \wedge^k\R^d$ and hence, $\|v\| = \|*\!v\|$ for all $v \in \wedge^k\R^d$. For a more detailed treatment, the reader is referred e.g.\ to \cite[\S 3.2]{KaenmakiMorris2016}.

%   Let us denote the Hodge dual operator between $\bigwedge^{k}\R^d$ and $\bigwedge^{d-k}\R^d$ by $\star$. That is, for every $\lambda\in\bigwedge^{k}\R^d$, $\star\lambda$ is the unique element of $\bigwedge^{d-k}\R^d$ such that
%   $$
%     \lambda\wedge\theta=\inner{\theta}{\star\lambda}e_1\wedge\cdots\wedge e_d,
%   $$
%   where $e_1,\dots,e_d$ is an orthonormal basis of $\R^d$. Moreover, for every $1\leq i_1<\dots<i_{k}\leq d$
%   $$
%     \star(e_{i_1}\wedge\cdots\wedge e_{i_{k}})=e_{j_1}\wedge\cdots\wedge e_{j_{d-k}},
%   $$
%   where $1\leq j_1<\dots<j_{d-k}\leq d$ and  $\{i_1,\dots,i_{k}\}\cup\{j_1,\dots,j_{d-k}\}=\{1,\dots,d\}$. Thus, for every $\lambda\in\wedge^k\R^d$, $\|\star\lambda\|=\|\lambda\|$.

  Observe first that if $A \in GL_d(\R)$ and the vectors $v,w\in\R^d$ are perpendicular, then also the vectors $A^Tv,A^{-1}w \in \R^d$ are perpendicular. Let $v_1,\dots,v_{d-k}$ be an orthonormal basis of $V$ and $v_{d-k+1},\dots,v_d$ be an orthonormal basis of $V^{\perp}$. Moreover, let $g_1,\dots,g_{d-k}$ and $g_1',\dots,g_k'$ be the vectors obtained from the Gram-Schmidt orthogonalization of $(A_{\ii|_n})^{-1}v_1,\dots, (A_{\ii|_n})^{-1}v_{d-k}$ and $(A_{\ii|_n})^Tv_{d-k+1},\dots,(A_{\ii|_n})^Tv_d$, respectively. This means that
  \begin{align*}
    g_1 &= (A_{\ii|_n})^{-1}v_1, \\
    g_2 &= (A_{\ii|_n})^{-1}v_1-c_{1,1} g_1, \\
    &\;\;\vdots \\
    g_{d-k} &= (A_{\ii|_n})^{-1}v_{d-k}-c_{d-k,1}g_1-\cdots-c_{d-k,d-k-1}g_{d-k-1}
  \end{align*}
  and
  \begin{align*}
    g_1' &= (A_{\ii|_n})^Tv_{d-k+1}, \\
    g_2' &= (A_{\ii|_n})^Tv_{d-k+2}-c_{1,1}'g_1', \\
    &\;\;\vdots \\
    g_k' &= (A_{\ii|_n})^Tv_{d}-c_{k,1}'g_1'-\cdots-c_{k,k-1}g_{k-1}'
  \end{align*}
  with appropriate choices of the constants $c_{i,j}$ and $c_{i,j}'$. Hence,
  $$
    (A_{\ii|_n})^{-1}v_1\wedge\cdots\wedge (A_{\ii|_n})^{-1}v_{d-k}=g_1\wedge\cdots\wedge g_{d-k}
  $$
  and
  $$
    (A_{\ii|_n})^Tv_{d-k+1}\wedge\cdots \wedge (A_{\ii|_n})^Tv_d=g_1'\wedge\cdots\wedge g_k',
  $$
  and $\{g_1,\dots,g_{d-k},g_1',\dots,g_k'\}$ is an orthogonal basis of $\R^d$.
  Therefore,
  \begin{align*}
    0 \neq (g_1\wedge\cdots\wedge g_{d-k}) \wedge (g_1'\wedge\cdots\wedge g_k') = \inner{g_1\wedge\cdots\wedge g_{d-k}}{*(g_1'\wedge\cdots\wedge g_k')}e_1\wedge\cdots\wedge e_d.
  \end{align*}
  Since any $g_i'$ is perpendicular to any $g_j$, we must have
  $$
    *(g_1'\wedge\cdots\wedge g_k') = C g_1\wedge\cdots\wedge g_{d-k}
  $$
  for some constant $C>0$. Since
  \begin{align*}
    \det(A_{\ii|_n})^T&v_1\wedge\cdots\wedge v_d
    = ((A_{\ii|_n})^Tv_{1}\wedge\cdots \wedge (A_{\ii|_n})^Tv_{d-k}) \wedge ((A_{\ii|_n})^Tv_{d-k+1}\wedge\cdots \wedge (A_{\ii|_n})^Tv_d) \\
    &= \inner{(A_{\ii|_n})^Tv_{1}\wedge\cdots \wedge (A_{\ii|_n})^Tv_{d-k}}{*((A_{\ii|_n})^Tv_{d-k+1}\wedge\cdots \wedge (A_{\ii|_n})^Tv_d)}e_1\wedge\cdots\wedge e_d \\
    &= C\inner{(A_{\ii|_n})^Tv_{1}\wedge\cdots \wedge (A_{\ii|_n})^Tv_{d-k}}{(A_{\ii|_n})^{-1}v_1\wedge\cdots\wedge (A_{\ii|_n})^{-1}v_{d-k}}e_1\wedge\cdots\wedge e_d \\
    &= C\inner{v_1\wedge\cdots\wedge v_{d-k}}{v_1\wedge\cdots\wedge v_{d-k}}e_1\wedge\cdots\wedge e_d \\
    &= Ce_1\wedge\cdots\wedge e_d
  \end{align*}
  we have $|C|=|\det(A_{\ii|_n})^T|$. Therefore, by \eqref{eq:Furstlimit} and the Oseledets' decomposition,
  \begin{align*}
    \sum_{j=k+1}^{d} \chi_j(\Av,\nu)
    &= \lim_{n\to\infty}\tfrac{1}{n}\log\| ((A_{\iii|_n})^{-1})^{\wedge (d-k)}|\wedge^{d-k}V \| \\
    &= \lim_{n\to\infty}\tfrac{1}{n}\log\| (A_{\iii|_n})^{-1}v_1\wedge\cdots\wedge(A_{\iii|_n})^{-1}v_{d-k} \|\\
    &= \lim_{n\to\infty}\tfrac{1}{n}\log\frac{\|*\!((A_{\ii|_n})^Tv_{d-k+1}\wedge\cdots \wedge (A_{\ii|_n})^Tv_d)\|}{\det(A_{\ii|_n})^T}\\
    &= \lim_{n\to\infty}\tfrac{1}{n}\log\|(A_{\ii|_n})^Tv_{d-k+1}\wedge\cdots \wedge (A_{\ii|_n})^Tv_d\| + \sum_{j=1}^{d} \chi_j(\Av,\nu)\\
    &= \lim_{n\to\infty}\tfrac{1}{n}\log\|((A_{\ii|_n})^T)^{\wedge k}|\wedge^kV\|+\sum_{j=1}^{d} \chi_j(\Av,\nu)
  \end{align*}
  for $\mu_F^{d-k} \times \nu$-almost every $(V,\iii) \in G(d,d-k) \times \Sigma$. This completes the proof of the lemma.
\end{proof}

%\section{Modified self-affine transversality}\label{sec:transversality}

Let $\Av\in GL_d(\R)^N$ be a tuple of contractive matrices and $\tv = (v_1,\ldots,v_N) \in (\R^d)^N$. Let $\mathcal U$ be a parameter space equipped with a measure $m$ such that each $u\in \mathcal U$ is a mapping $u\colon \R^d\to \R^d$. We will use this parametrised family of mappings to modify the self-affine iterated function system $\Fb_{\Av, \tv}$ by replacing it with $\Fb_{\Av, u(\tv)}$, where $u(\tv)=(u(v_1), \dots, u(v_N))$. We say that the pair $(\mathcal U, m)$ satisfies the \emph{modified self-affine transversality condition} for $\Av$, if there exists a constant $C > 0$ such that for every proper subspace $V$ of $\R^d$ and $t>0$ it holds that
\begin{equation*} %\label{eq:modtrans}
  m(\{ u \in \mathcal{U} : |\proj_V\pi_{\Av, u(\tv)}(\iii)-\proj_V\pi_{\Av, u(\tv)}(\jjj)| < t \}) \le C\prod_{i=1}^{\dim V} \min\biggl\{ 1,\frac{t}{\alpha_i(\proj_VA_{\iii\wedge\jjj})} \biggr\}
\end{equation*}
for all $\iii,\jjj \in \Sigma$ with $\iii \ne \jjj$.
We note that if $V=\R^d$, then this condition is the self-affine transversality condition of Jordan, Pollicott and Simon \cite{JordanPollicottSimon07}.

\begin{lemma}\label{lem:transint}
  Let $\tv \in (\R^d)^N$ and $\Av = (A_1,\ldots,A_N) \in GL_d(\R)^N$ be a tuple of contractive matrices. If $(\mathcal U, m)$ satisfies the modified transversality condition for $\Av$, then there exists a constant $C>0$ such that
  \begin{align*}
    \int \frac{\mathrm{d}m(u)}{|\proj_{V}(\pi_{\Av, u(\tv)}(\iii))-\proj_{V}(\pi_{\Av, u(\tv)}(\jjj))|^s} \le \frac{C}{\fii^s(\proj_{V}A_{\iii\wedge\jjj})}
  \end{align*}
  for every proper subspace $V$ of $\R^d$ and for all $0\le s<\dim V$.
\end{lemma}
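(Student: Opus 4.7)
The plan is to reduce the integral estimate to the pointwise transversality bound by a standard layer-cake representation, and then to verify by direct computation that, after the substitution and splitting, the resulting elementary integrals assemble to give $C/\phi^s(\proj_V A_{\iii\wedge\jjj})$.

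First, I set $k = \dim V$, write $B = \proj_V A_{\iii\wedge\jjj}$ for shorthand, and denote its singular values by $\alpha_1 \ge \cdots \ge \alpha_k > 0$ (with the convention $\alpha_0 = \infty$ and $\alpha_{k+1} = 0$). For the non-negative function
\begin{equation*}
  f(u) = |\proj_V \pi_{\Av,u(\tv)}(\iii) - \proj_V \pi_{\Av,u(\tv)}(\jjj)|,
\end{equation*}
I use Fubini in the form $\int f^{-s}\dd m = s\int_0^\infty r^{-s-1} m(\{u : f(u) < r\})\dd r$. Applying the modified self-affine transversality hypothesis directly yields
\begin{equation*}
  \int f^{-s}\dd m \le sC \int_0^\infty r^{-s-1} \prod_{i=1}^{k} \min\bigl\{1, r/\alpha_i\bigr\}\dd r.
\end{equation*}
Here I use that the product is bounded by $1$, which also guarantees integrability for large $r$ as long as $s>0$, and that it equals $r^k/(\alpha_1\cdots\alpha_k)$ near $r=0$, which is integrable against $r^{-s-1}$ precisely because $s<k=\dim V$.

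The next step is to evaluate the right-hand side by splitting the integration range according to the singular values. On each interval $(\alpha_{j+1},\alpha_j]$ for $j=0,1,\ldots,k$, the inequalities $r\le\alpha_i$ hold exactly for $i\le j$, so $\prod_{i=1}^{k} \min\{1, r/\alpha_i\} = r^j/(\alpha_1\cdots\alpha_j)$. Consequently,
\begin{equation*}
  \int_0^\infty r^{-s-1} \prod_{i=1}^{k} \min\bigl\{1, r/\alpha_i\bigr\}\dd r = \sum_{j=0}^{k} \frac{1}{\alpha_1\cdots\alpha_j} \int_{\alpha_{j+1}}^{\alpha_j} r^{j-s-1}\dd r,
\end{equation*}
and every interior integral can be computed explicitly as $(\alpha_j^{j-s} - \alpha_{j+1}^{j-s})/(j-s)$ when $j\ne s$ (with an obvious logarithmic replacement in the integer case $j=s$; this occurs at most once and is absorbed by an $\eps$-perturbation of $s$).

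The final and only genuinely technical step is to show that each term in the finite sum is controlled by $C'/\phi^s(B)$. Writing $\ell = \lfloor s \rfloor$ so that $\phi^s(B) = \alpha_1\cdots\alpha_\ell\,\alpha_{\ell+1}^{s-\ell}$, the main contributions come from the two terms $j=\ell$ and $j=\ell+1$: discarding the negative summand in $\alpha_j^{j-s}-\alpha_{j+1}^{j-s}$ and simplifying, both reduce directly to $(s-\ell)^{-1}\phi^s(B)^{-1}$ and $(\ell+1-s)^{-1}\phi^s(B)^{-1}$ respectively. For $j \ge \ell+2$ or $j \le \ell-1$, the same manipulation produces an expression of the form $1/(\alpha_1\cdots\alpha_{j-1}\,\alpha_j^{s-j+1})$ up to a harmless multiplicative constant, and this is shown to be at most $1/\phi^s(B)$ by monotonicity of the singular values: the ratio against $1/\phi^s(B)$ becomes a product of $(\alpha_{i+1}/\alpha_i)^{\pm(s-i)}$ factors, each at most one. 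I expect the bookkeeping of these singular-value telescoping estimates, together with the integer-$s$ degenerate case, to be the only place where care is required; everything else is dictated by the layer-cake identity.
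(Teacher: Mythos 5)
The paper does not actually prove this lemma; it says only ``The proof is a slight modification of the proof of \cite[Lemma~4.5]{JordanPollicottSimon07} and hence omitted.'' Your layer-cake argument is the natural reconstruction of that proof and it is essentially correct: the identity $\int f^{-s}\,\dd m = s\int_0^\infty r^{-s-1}m(\{f<r\})\,\dd r$, the application of the transversality hypothesis, the partition of $(0,\infty)$ by the singular values $\alpha_{j+1}<r\le\alpha_j$, and the Abel-summation/telescoping bound using $\alpha_1\ge\cdots\ge\alpha_k$ (with partial sums of the exponent vector all nonnegative) all go through and do assemble to $C/\phi^s(\proj_V A_{\iii\wedge\jjj})$.

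Two small points deserve a more careful statement than you give them. First, the endpoint $s=0$ is degenerate for the layer-cake identity (the $r$-integral diverges at infinity), but the original integral is then just $m(\mathcal U)$ and $\phi^0\equiv 1$, so this case is trivially true and should be dispatched separately. Second, and more substantively, your remark that the integer case $j=\lfloor s\rfloor=s$ is ``absorbed by an $\varepsilon$-perturbation of $s$'' is not a rigorous patch. When $s=\ell$ is an integer, the $j=\ell$ term is $\log(\alpha_\ell/\alpha_{\ell+1})/(\alpha_1\cdots\alpha_\ell)$, and the factor $\log(\alpha_\ell/\alpha_{\ell+1})$ is \emph{not} bounded uniformly over $\iii,\jjj$; also your constant $C$ contains the factors $(s-\ell)^{-1}$ and $(\ell+1-s)^{-1}$, which blow up as $s$ tends to an integer. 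Perturbing $s$ does not remove this: the bound you obtain for nearby non-integer $s$ is not uniformly convertible into a bound at $s=\ell$. The honest thing to say is that the argument gives the stated estimate for all non-integer $s\in(0,\dim V)$, and that this is sufficient for the paper's use of the lemma (Proposition~\ref{thm:projection} fixes an arbitrary $s<\min\{k,\diml\nu\}$ and may as well choose it non-integer), so the restriction is harmless. This limitation is inherited from \cite{JordanPollicottSimon07} and is not a defect specific to your write-up, but it should be flagged rather than glossed over.
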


\begin{proof}
  The proof is a slight modification of the proof of \cite[Lemma~4.5]{JordanPollicottSimon07} and hence omitted.
\end{proof}

We will use the above lemma in the proof of the following proposition which is a key observation related to the modified self-affine transversality condition.

\begin{proposition} \label{thm:projection}
  Let $\tv \in (\R^d)^N$, $\Av = (A_1,\ldots,A_N) \in GL_d(\R)^N$ be a tuple of contractive matrices, and $\nu$ be an ergodic measure on $\Sigma$. Assume that the $(d,d-k)$-Furstenberg measure $\mu_F^{d,d-k}$ with respect to $\nu$ and $\Av$ exists for all $k\in\{1,\dots,d-1\}$. If $(\mathcal U, m)$ satisfies the modified self-affine transversality condition for $\Av$, then
  \begin{equation*}
    \ldimh \proj_{V^\perp}\pi_{\Av, u(\tv)}\nu = \min\{ k,\diml\nu \}
  \end{equation*}
  for $\mu_F^{d,d-k}$-almost all $V \in G(d,d-k)$, for all $k \in \{ 1,\ldots,d-1 \}$, and for $m$-almost all $u \in \mathcal{U}$.
\end{proposition}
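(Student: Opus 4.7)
My plan is to establish matching upper and lower bounds. For the upper bound, the projection lives in a $k$-dimensional space and so $\ldimh\proj_{V^\perp}\pi_{\Av,u(\tv)}\nu\le k$; moreover, projections cannot increase Hausdorff dimension, and for the self-affine push-down of an ergodic measure the Lyapunov dimension provides an upper bound $\ldimh\pi_{\Av,u(\tv)}\nu\le\diml\nu$ via the classical ellipse-cover argument of Falconer and Jordan--Pollicott--Simon.

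For the lower bound, fix $s<\min\{k,\diml\nu\}$ and use the potential-theoretic criterion: it suffices to exhibit, for $(m\times\mu_F^{d,d-k})$-almost every $(u,V)$, a subset $\Omega\subset\Sigma$ of positive $\nu$-mass whose projection has finite $s$-energy. Integrating the $s$-energy over $u$ and $V$, applying Fubini and Lemma~\ref{lem:transint}, and grouping pairs $(\iii,\jjj)$ by the length $n$ of their common prefix gives
\begin{equation*}
    \iint E_s(\proj_{V^\perp}\pi_{\Av,u(\tv)}\nu)\,dm(u)\,d\mu_F^{d,d-k}(V)
    \le C \sum_{n=0}^\infty \iint \frac{\nu([\iii|_n])}{\phi^s(\proj_{V^\perp} A_{\iii|_n})}\,d\mu_F^{d,d-k}(V)\,d\nu(\iii).
\end{equation*}
The algebraic key is that
\begin{equation*}
    L(s):=\sum_{j=1}^{\lfloor s\rfloor}\chi_j(\Av,\nu)+(s-\lfloor s\rfloor)\chi_{\lceil s\rceil}(\Av,\nu)<h_\nu,
\end{equation*}
which follows from plugging $k=\lfloor s\rfloor$ into the defining minimum of $\diml\nu$. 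By Shannon--McMillan--Breiman, $-\tfrac1n\log\nu([\iii|_n])\to h_\nu$ for $\nu$-a.e.\ $\iii$, and by Lemma~\ref{lem:projection_lyapunov}, $-\tfrac1n\log\phi^s(\proj_{V^\perp}A_{\iii|_n})\to L(s)$ for $(\mu_F^{d,d-k}\times\nu)$-a.e.\ $(V,\iii)$. Heuristically each summand is of order $e^{-n(h_\nu-L(s))}$, yielding a convergent geometric sum.

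The main obstacle is converting these pointwise limits into an integrable exponential estimate, since on a small bad set both factors can misbehave. I would resolve this by Egorov: for any $\epsilon>0$ with $3\epsilon<h_\nu-L(s)$, there exist $\Omega\subset\Sigma$ with $\nu(\Omega)>1-\epsilon$ and $N\in\N$ such that for every $\iii\in\Omega$ and $n\ge N$,
\begin{equation*}
    \nu([\iii|_n])\le e^{-n(h_\nu-\epsilon)}, \qquad \int \phi^s(\proj_{V^\perp} A_{\iii|_n})^{-1}\,d\mu_F^{d,d-k}(V)\le e^{n(L(s)+\epsilon)},
\end{equation*}
the latter bound obtained by Fubini from Lemma~\ref{lem:projection_lyapunov} combined with Egorov on $G(d,d-k)\times\Sigma$, shrinking $\Omega$ if necessary. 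Restricting the series to $\Omega$ then gives $\sum_n e^{-n(h_\nu-L(s)-2\epsilon)}<\infty$, so the $s$-energy of $\proj_{V^\perp}\pi_{\Av,u(\tv)}(\nu|_\Omega)$ is finite for $(m\times\mu_F^{d,d-k})$-almost every $(u,V)$. Via the characterisation $\ldimh\mu=\inf\{\dimh A:\mu(A)>0\}$, this forces $\ldimh\proj_{V^\perp}\pi_{\Av,u(\tv)}\nu\ge s$. Letting $s\nearrow\min\{k,\diml\nu\}$ along a countable sequence completes the proof.
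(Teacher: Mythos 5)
Your overall plan — upper bound by Lipschitz projection and the standard Lyapunov-dimension bound, lower bound via a potential-theoretic estimate combined with Lemma~\ref{lem:transint}, the cylinder decomposition, Shannon--McMillan--Breiman, Lemma~\ref{lem:projection_lyapunov}, and Egorov — is the same strategy as the paper, and the algebraic observation $L(s)<h_\nu$ for $s<\diml\nu$ is correct. The upper bound is fine.

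The lower bound, however, contains a genuine gap. You assert that, after Egorov, for all $\iii\in\Omega$ and $n\ge N$ one has
$\int \phi^s(\proj_{V^\perp} A_{\iii|_n})^{-1}\,\mathrm{d}\mu_F^{d,d-k}(V)\le e^{n(L(s)+\epsilon)}$,
justified by ``Fubini from Lemma~\ref{lem:projection_lyapunov} combined with Egorov on $G(d,d-k)\times\Sigma$.'' This does not follow. Egorov applied to $-\tfrac1n\log\phi^s(\proj_{V^\perp}A_{\iii|_n})$ on $G(d,d-k)\times\Sigma$ produces a set $E_2$ of $\mu_F^{d,d-k}\times\nu$-measure $>1-\epsilon$ on which the convergence is uniform, i.e.\ a pointwise bound $\phi^s(\proj_{V^\perp}A_{\iii|_n})\ge C^{-1}e^{-n(L(s)+\epsilon)}$ only \emph{for} $(V,\iii)\in E_2$. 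Your inequality, though, integrates over \emph{all} $V\in G(d,d-k)$ with $\iii$ fixed, and on the complementary bad set $\{V:(V,\iii)\notin E_2\}$ the integrand $\phi^s(\proj_{V^\perp}A_{\iii|_n})^{-1}$ is not controlled: for $V$ aligned with the dominant left-singular directions of $A_{\iii|_n}$ it can be as large as roughly $e^{n(\chi_{d-k+1}+\cdots+\chi_{d-k+\lceil s\rceil})}$, which dwarfs $e^{nL(s)}$. Without a quantitative decay estimate for $\mu_F^{d,d-k}$ near the bad set (which is not available and not claimed), neither Fubini nor shrinking $\Omega\subset\Sigma$ rescues the bound, because $\Omega$ only restricts the $\iii$-variable, not $V$. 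The same obstruction already prevents you from applying Fatou or dominated convergence to interchange $\lim_n$ with $\int\cdot\,\mathrm{d}\mu_F^{d,d-k}$.

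The correct (and in fact the paper's) remedy is to restrict the integration jointly in $(V,\jjj)$ rather than to bound an inner $V$-integral for each $\iii$: replace the energy of $\proj_{V^\perp}\pi_{u}(\nu|_\Omega)$ by the mixed integral
$\iiint |\proj_{V^\perp}\pi_u(\iii)-\proj_{V^\perp}\pi_u(\jjj)|^{-s}\,\mathrm{d}\nu|_{E_1}(\iii)\,\mathrm{d}(\mu_F^{d,d-k}\times\nu)|_{E_2}(V,\jjj)\,\mathrm{d}m(u)$,
apply Lemma~\ref{lem:transint} and the cylinder decomposition, and use the Egorov bound \emph{directly on the integrand} $\phi^s(\proj_V A_{\jjj|_n})^{-1}$ for $(V,\jjj)\in E_2$. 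This gives the desired convergent geometric series, and then finiteness of the restricted potential for $\nu|_{E_1}$-a.e.\ $\iii$, $\mu_F^{d,d-k}$-a.e.\ $V$, and $m$-a.e.\ $u$ yields $\ldimh\proj_{V^\perp}\pi_{u}\nu\ge s$ after letting $\epsilon\to0$ along a countable sequence. This is essentially the local potential-theoretic criterion rather than the global energy criterion; the latter would require bounding the $V$-integral uniformly in $\iii$, which is precisely the step that fails.
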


\begin{proof}
  To simplify notation, we denote $\chi_j(\Av,\nu)$ by $\chi_j$ and $\pi_{\Av, u(\tv)}$ by $\pi_u$. Let $s < \min\{ \diml\nu,k \}$. By standard methods,
  %(see e.g.\ \cite[Lemma 2.1]{KaenmakiYangWu2016}),
  it suffices to show that for $\nu$-almost every $\iii \in \Sigma$ it holds that
  \begin{equation*}
    \int |\proj_{V^{\perp}}\pi_u(\iii)-\proj_{V^{\perp}}\pi_u(\jjj)|^{-s} \dd\nu(\jjj)  < \infty
  \end{equation*}
  for $\mu^{d,d-k}_F$-almost all $V \in G(d,d-k)$ and $m$-almost all $u \in \mathcal{U}$. For this to hold, it is enough to prove that for every $\eps>0$ small enough there exist sets $E_1\subset \Sigma$ with $\nu(E_1)>1-\eps$ and $E_2\subset G(d,d-k)\times\Sigma$ with $\mu^{d-k}_F\times\nu(E_2)>1-\varepsilon$ such that
  \begin{equation*}
    \iiint \frac{\mathrm{d}\nu|_{E_1}(\iii)\,\mathrm{d}(\mu_F^{d,d-k}\times\nu)|_{E_2}(V,\jjj)\,\mathrm{d}m(u)}{|\proj_{V^{\perp}}\pi_u(\iii)-\proj_{V^{\perp}}\pi_u(\jjj)|^s} < \infty.
  \end{equation*}
  Fix $0<\eps<\chi_{\lceil s \rceil}(\diml\nu-s)/2$. By Egorov's Theorem, Shannon-McMillan-Breiman Theorem, and Lemma \ref{lem:projection_lyapunov}, there exist $C \ge 1$, $E_1\subset \Sigma$ with $\nu(E_1)>1-\eps$, and $E_2\subset G(d,d-k)\times\Sigma$ with $\mu^{d-k}_F\times\nu(E_2)>1-\varepsilon$ such that
  \begin{align*}
    C^{-1}e^{-n(h_{\nu}+\varepsilon)} &\leq \nu([\iii|_n]) \leq  Ce^{-n(h_{\nu}-\varepsilon)}, \\
    C^{-1}e^{-n( \chi_1 + \cdots + \chi_{\lfloor s \rfloor} + (s-\lfloor s \rfloor)\chi_{\lceil s \rceil}+\varepsilon)}  &\leq \fii^s(\proj_{V^\perp}A_{\jjj|_n}) \leq  Ce^{-n( \chi_1 + \cdots + \chi_{\lfloor s \rfloor} + (s-\lfloor s \rfloor)\chi_{\lceil s \rceil}-\varepsilon)}
  \end{align*}
  for every $\ii\in E_1$ and $(V,\jjj)\in E_2$. By Fubini's Theorem and Lemma~\ref{lem:transint}, we have
  \begin{align*}
    \iiint \frac{\mathrm{d}\nu|_{E_1}(\iii)\,\mathrm{d}(\mu_F^{d,d-k}\times\nu)|_{E_2}(V,\jjj)\,\mathrm{d}m(u)}{|\proj_{V^{\perp}}\pi_u(\iii)-\proj_{V^{\perp}}\pi_u(\jjj)|^s}
    &= \iiint\frac{\mathrm{d}m(u)\,\mathrm{d}\nu|_{E_1}(\iii)\,\mathrm{d}(\mu_F^{d,d-k}\times\nu)|_{E_2}(V,\jjj)}{|\proj_{V^{\perp}}\pi_u(\iii)-\proj_{V^{\perp}}\pi_u(\jjj)|^s} \\
    & \leq C'\iint\frac{\mathrm{d}(\mu_F^{d,d-k}\times\nu)|_{E_2}(V,\jjj)\,\mathrm{d}\nu|_{E_1}(\iii)}{\fii^s(\proj_{V}A_{\iii\wedge\jjj})}
  \end{align*}
  for some constant $C'>0$. By decomposing the space $\Sigma\times G(d,d-k)$ into $\{\iii\}\times G(d,d-k)$ and $\bigcup_{n=0}^{\infty}\left\{\jjj:|\iii\wedge\jjj|=n\right\}\times G(d,d-k)$, we get
  \begin{align*}
    C'\iint&\frac{\mathrm{d}(\mu_F^{d,d-k}\times\nu)|_{E_2}(V,\jjj)\,\mathrm{d}\nu|_{E_1}(\iii)}{\fii^s(\proj_{V}A_{\iii\wedge\jjj})} \\
    &\leq CC'\sum_{n=0}^{\infty}\int e^{n( \chi_1 + \cdots + \chi_{\lfloor s \rfloor} + (s-\lfloor s \rfloor)\chi_{\lceil s \rceil}+\varepsilon)}(\mu_F^{d,d-k}\times\nu)|_{E_2}(G(d,d-k) \times [\iii|_n])\,\mathrm{d}\nu|_{E_1}(\iii) \\
    &=CC'\sum_{n=0}^{\infty} e^{n( \chi_1 + \cdots + \chi_{\lfloor s \rfloor} + (s-\lfloor s \rfloor)\chi_{\lceil s \rceil}+\varepsilon)}\sum_{|\kkk|=n}(\mu_F^{d,d-k}\times\nu)|_{E_2}(G(d,d-k) \times [\kkk])\nu|_{E_1}([\kkk]) \\
    &\leq C^2C'\sum_{n=0}^{\infty} e^{n( \chi_1 + \cdots + \chi_{\lfloor s \rfloor} + (s-\lfloor s \rfloor)\chi_{\lceil s \rceil}+\varepsilon)}e^{-n(h_{\nu}-\varepsilon)}\sum_{|\kkk|=n}(\mu_F^{d,d-k}\times\nu)|_{E_2}(G(d,d-k) \times [\kkk])\\
    &\leq C^2C'\sum_{n=0}^{\infty} e^{n( \chi_1 + \cdots + \chi_{\lfloor s \rfloor} + (s-\lfloor s \rfloor)\chi_{\lceil s \rceil}+\varepsilon)}e^{-n(h_{\nu}-\varepsilon)}
  \end{align*}
  Since $s\chi_{\lceil s \rceil} < \chi_{\lceil s \rceil}\diml\nu - 2\eps = \lfloor s \rfloor\chi_{\lceil s \rceil} + h_\nu - \sum_{j=1}^{\lfloor s \rfloor} \chi_j - 2\eps$ we have finished the proof.
\end{proof}

We say that an ergodic measure $\nu$ on $\Sigma$ satisfies the \emph{Ledrappier-Young formula} for a tuple $\Av=(A_1,\dots,A_N) \in GL_d(\R)$ of contractive matrices if
\begin{align*}
  \dim \pi_{\Av,\tv}\nu &= \sum_{k=1}^{d-1} \frac{\chi_{k+1}(\Av,\nu)-\chi_{k}(\Av,\nu)}{\chi_d(\Av,\nu)} \dim\proj_{V_k^\perp}\pi_{\Av,\tv}\nu + \frac{h_\nu-H_{\Av,\tv,\nu}}{\chi_d(\Av,\nu)}
\end{align*}
for all $\tv \in (\R^d)^N$ and for $\mu_F^1 \times \cdots \times \mu_F^{d-1}$-almost all $(V_1,\ldots,V_{d-1}) \in G(d,1) \times \cdots \times G(d,d-1)$.
Here $H_{\Av,\tv,\nu}$ denotes the conditional entropy defined in \cite[\S 2]{BaranyKaenmaki}. We omit its definition since $H_{\Av,\tv,\nu} = 0$ in our considerations; see \cite[Corollary 2.8]{BaranyKaenmaki}.
%We say that $\nu$ \text{satisfies the LY-formula for an $\mathcal{A}\subseteq (GL_d(\R))^N$} if for every $\Av\in\mathcal{A}$, the measure $\nu$ satisfies the LY-formula.

\begin{lemma}\label{lem:safreg}
  If a Bernoulli measure $\nu$ on $\Sigma$ has simple Lyapunov spectrum, then $\nu$ satisfies the Ledrappier-Young formula. Moreover, every ergodic quasi-Bernoulli measure $\nu$ on $\Sigma$ satisfies the Ledrappier-Young formula for every tuple $\Av\in GL_d(\R)^N$ of contractive matrices satisfying the totally dominated splitting condition.
\end{lemma}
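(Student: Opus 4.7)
The plan is to derive both parts of the lemma directly from the Ledrappier--Young theory recently developed by B\'ar\'any and K\"aenm\"aki in \cite{barany2015, BaranyKaenmaki}, since the displayed formula is essentially a reformulation of their main dimension results once the conditional entropy term is shown to vanish.

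For the first assertion, let $\nu$ be a Bernoulli measure with simple Lyapunov spectrum $\chi_1(\Av,\nu) < \cdots < \chi_d(\Av,\nu)$. I would invoke \cite[Theorem~2.3]{BaranyKaenmaki}, which asserts that $\pi_{\Av,\tv}\nu$ is exact-dimensional and that its dimension admits a Ledrappier--Young decomposition along the Oseledets flag of the inverse cocycle. To put this in the form stated in the lemma, I would identify the Furstenberg-typical flag $(V_1,\ldots,V_{d-1}) \in G(d,1)\times\cdots\times G(d,d-1)$ with the Oseledets flag and re-express the dimensions of the conditional measures along consecutive subspaces of the flag as dimensions of the projected measures $\proj_{V_k^\perp}\pi_{\Av,\tv}\nu$, using the simplicity of the Lyapunov spectrum to guarantee that the flag is well-defined and one-dimensional at each step.

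For the second assertion, under the totally dominated splitting assumption $\Av \in \mathcal{D}$, I would instead appeal to \cite[Theorem~2.6]{BaranyKaenmaki}, which extends the previous result from Bernoulli measures with simple spectrum to arbitrary ergodic quasi-Bernoulli measures. The role of the hypothesis $\Av \in \mathcal{D}$ is precisely to guarantee a uniform separation between consecutive singular values of $A_{\iii|_n}$ along every orbit, and this uniform control is what allows the Ledrappier--Young decomposition to be carried out for measures lacking the independence structure of Bernoulli measures.

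In both cases, the last step is to verify that the conditional entropy term $H_{\Av,\tv,\nu}$ appearing in the statement vanishes in our setting; this is exactly \cite[Corollary~2.8]{BaranyKaenmaki}, already invoked in the paragraph preceding the lemma. The main potential obstacle is notational rather than mathematical: the Ledrappier--Young formula in \cite{BaranyKaenmaki} is not phrased verbatim as the weighted sum of orthogonal projection dimensions written here, so one needs to telescope their expression, use the chain of successive projections, and match the Lyapunov exponent gaps $(\chi_{k+1}-\chi_k)/\chi_d$ as the appropriate weights. Once this bookkeeping is carried out, both statements of the lemma follow immediately.
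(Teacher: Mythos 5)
Your approach matches the paper's exactly: both assertions are read off directly from Theorems~2.3 and 2.6 of \cite{BaranyKaenmaki}, respectively, which is precisely the one-line proof given in the paper. One small clarification: you do not need to (and, absent the strong separation condition, cannot) prove $H_{\Av,\tv,\nu}=0$ as part of this lemma, since the Ledrappier--Young formula as this paper defines it carries $H_{\Av,\tv,\nu}$ as an explicit term; its vanishing is handled separately in the proof of Theorem~\ref{thm:translationtransversal} under the additional hypotheses stated there.
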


\begin{proof}
  The claims follow immediately from \cite[Theorems~2.3 and 2.6]{BaranyKaenmaki}.
\end{proof}

The next theorem shows that, under the strong separation condition, the Ledrappier-Young formula and the modified self-affine transversality condition together guarantee the desired dimension formula.

\begin{theorem}\label{thm:translationtransversal}
  Let $\tv\in (\R^{d})^N$ and $\nu$ be an ergodic measure on $\Sigma$ satisfying the Ledrappier-Young formula for a tuple $\Av \in GL_d(\R)^N$ of contractive matrices. Assume that $(\mathcal U, m)$ satisfies the modified self-affine transversality condition for $\Av$. If $\Phi_{\Av,u(\tv)}$ satisfies the strong separation condition for all $u \in \mathcal U$ or $\diml\nu\le d-1$, then
  \begin{equation*}
    \dim \pi_{\Av,u(\tv)}\nu = \diml\nu
  \end{equation*}
  for $m$-almost all $u \in \mathcal{U}$.
\end{theorem}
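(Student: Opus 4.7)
The argument splits into matching upper and lower bounds. For the upper bound $\dim\pi_{\Av,u(\tv)}\nu \le \diml\nu$, which holds for every $u\in\mathcal{U}$ (not just almost every), I would use the natural cylinder covers of $E_{\Av,u(\tv)}$ together with Oseledets' theorem and the Shannon--McMillan--Breiman theorem applied to $\nu$, estimating the $s$-dimensional mass by $\sum_{|\iii|=n}\phi^s(A_\iii)$ and exploiting that this sum is summable for any $s > \diml\nu$, following the standard reasoning of \cite[Proposition~2.8]{JordanPollicottSimon07}.

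For the matching lower bound, which is the substance of the theorem, the strategy is to combine the Ledrappier--Young formula (available by hypothesis) with Proposition~\ref{thm:projection}. Either separation hypothesis of the theorem---strong separation for $\Phi_{\Av,u(\tv)}$, or $\diml\nu\le d-1$---forces $H_{\Av,u(\tv),\nu}=0$ via \cite[Corollary~2.8]{BaranyKaenmaki}. Next, Proposition~\ref{thm:projection}, which is exactly the output of the modified self-affine transversality, provides for $m$-almost every $u$ the saturating identity
\begin{equation*}
\ldimh\proj_{V^\perp}\pi_{\Av,u(\tv)}\nu = \min\{k,\diml\nu\}
\end{equation*}
simultaneously for every $k\in\{1,\ldots,d-1\}$ and $\mu_F^{d,d-k}$-almost every $V\in G(d,d-k)$. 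Since the matching upper bound on each projected dimension is trivial, these are exact dimension identities. Plugging the saturated values into the Ledrappier--Young formula and using the definition of the Lyapunov dimension (i.e., $h_\nu=\sum_{j=1}^\ell\chi_j(\Av,\nu) + (\diml\nu-\ell)\chi_{\ell+1}(\Av,\nu)$ when $\diml\nu\in[\ell,\ell+1)$), a direct algebraic telescoping of the weighted sum yields $\dim\pi_{\Av,u(\tv)}\nu \ge \diml\nu$, completing the proof.

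The main technical points I expect to require care are twofold. First, matching the Furstenberg measures that intervene in the Ledrappier--Young formula to those controlled by Proposition~\ref{thm:projection} requires a duality argument on Grassmannians using the orthogonal complement map between $G(d,k)$ and $G(d,d-k)$, together with the compatibility between the forward cocycle $(\Av,\nu)$ and its inverse. Second, the vanishing of the conditional entropy $H_{\Av,u(\tv),\nu}$ in the subcritical regime $\diml\nu\le d-1$ (where strong separation is absent) hinges on the fact that a Lyapunov-subcritical measure cannot concentrate on overlap cylinders enough to produce nonzero conditional entropy, as in the argument underlying \cite[Corollary~2.8]{BaranyKaenmaki}; in the strong separation case the vanishing is of course automatic.
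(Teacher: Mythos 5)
Your overall strategy matches the paper's proof for most of the argument: plug the saturating identity of Proposition~\ref{thm:projection} into the Ledrappier--Young formula and telescope to get $\dim\pi_{\Av,u(\tv)}\nu = \diml\nu - H_{\Av,u(\tv),\nu}/\chi_d(\Av,\nu)$, then argue $H_{\Av,u(\tv),\nu}=0$. However, there are two points worth flagging.

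First, your separate upper-bound argument (via cylinder covers and \cite[Proposition~2.8]{JordanPollicottSimon07}) is superfluous: the Ledrappier--Young formula is an exact equality, so once you substitute the projection dimensions from Proposition~\ref{thm:projection} and observe $H_{\Av,u(\tv),\nu}\ge 0$, you get the upper bound for free. The paper does not split into matching inequalities at all.

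Second, and more importantly, your treatment of the $\diml\nu\le d-1$ case does not work as stated. You propose to re-run "the argument underlying \cite[Corollary~2.8]{BaranyKaenmaki}" for subcritical Lyapunov dimension, but that corollary is proved under the strong separation hypothesis, which is precisely what is absent in this branch; there is no direct entropy argument of the type you gesture at in the paper. The paper's actual argument is a short dimension-monotonicity trick: take $V\in G(d,1)$ Furstenberg-typical, so by Proposition~\ref{thm:projection} (with $k=d-1$) the projected measure $\proj_{V^\perp}\pi_{\Av,u(\tv)}\nu$ already has dimension $\min\{d-1,\diml\nu\}=\diml\nu$; since orthogonal projection is Lipschitz, $\diml\nu = \dim\proj_{V^\perp}\pi_{\Av,u(\tv)}\nu \le \dim\pi_{\Av,u(\tv)}\nu = \diml\nu - H_{\Av,u(\tv),\nu}/\chi_d(\Av,\nu)$, which forces $H_{\Av,u(\tv),\nu}\le 0$ and hence $H_{\Av,u(\tv),\nu}=0$. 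This is the key observation your proposal is missing; without it, the subcritical branch has a genuine gap.
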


\begin{proof}
  By the Ledrappier-Young formula and Proposition \ref{thm:projection}, we have
  \begin{align*}
    \dim \pi_{\Av,u(\tv)}\nu &= \sum_{k=1}^{d-1} \frac{\chi_{k+1}(\Av,\nu)-\chi_{k}(\Av,\nu)}{\chi_d(\Av,\nu)} \dim\proj_{V_k^\perp}\pi_{\Av,u(\tv)}\nu + \frac{h_\nu-H_{\Av,u(\tv),\nu}}{\chi_d(\Av,\nu)} \\
    &= \sum_{k=1}^{d-1} \frac{\chi_{k+1}(\Av,\nu)-\chi_{k}(\Av,\nu)}{\chi_d(\Av,\nu)} \min\{ k,\diml\nu \} + \frac{h_\nu-H_{\Av,u(\tv),\nu}}{\chi_d(\Av,\nu)} \\
    &= \diml\nu - \frac{H_{\Av,u(\tv),\nu}}{\chi_d(\Av,\nu)}
  \end{align*}
  for $\mu_F^1 \times \cdots \times \mu_F^{d-1}$-almost every $(V_1,\ldots,V_{d-1}) \in G(d,1) \times \cdots \times G(d,d-1)$ and for $m$-almost every $u\in\mathcal{U}$. If $\Phi_{\Av,u(\tv)}$ satisfies the strong separation condition, then \cite[Corollary~2.8]{BaranyKaenmaki} implies that $H_{\Av,u(\tv),\nu}=0$ and the claim follows. Furthermore, if $\diml\mu \le d-1$, then, by choosing a typical $V \in G(d,1)$ in the sense of Proposition \ref{thm:projection}, we have $\diml\nu = \dim\proj_{V^\perp}\pi_{\Av,u(\tv)}\nu \le \dim\pi_{\Av,u(\tv)}\nu = \diml\nu - H_{\Av,u(\tv),\nu}/\chi_d(\Av,\nu)$. Therefore $H_{\Av,u(\tv),\nu}=0$ also in this case.
\end{proof}

\begin{remark}\label{ex:nonsharpness}
We indicate that some assumptions on the matrix norms in the statement of Theorem \ref{thm:mainplanar} are necessary, at least for our approach of proof.
As we will see in \S \ref{sec:first-proof}, the strategy of the proof for Theorem \ref{thm:mainplanar} relies heavily on Theorem \ref{thm:translationtransversal}. In order to
apply Theorem \ref{thm:translationtransversal} on a planar self-affine system, we only need to check that the strong separation condition and the modified self-affine
transversality condition hold, because by Lemma \ref{lem:safreg} the Ledrappier-Young formula holds for Bernoulli measures of planar self-affine systems.

We consider the example of Przytycki and Urba\'nski \cite[Theorem~8]{PrzytyckiUrbanski89}. They investigate an IFS $\Phi_{\Av,\tv}$ with $\Av=(A_1,A_2)$, where
$$
A_i=
\begin{pmatrix}
  \lambda & 0 \\
  0 & \gamma \\
\end{pmatrix}
$$
such that $\lambda>1/2>\gamma$ and $\lambda^{-1}$ is a Pisot number, and ${\bf v}=\{(0,0),(1-\lambda,1-\gamma)\}$. They prove that in this case, for the equidistributed
Bernoulli measure $\mu$, $\dimh \pi\mu<\diml\mu$. Notice that here, the strong separation condition holds for $\Fb_{\Av, \tv}$ and by varying $\lambda$ and $\gamma$ we
can break the condition \eqref{eq:matricesplanar}.

We will see in Lemma \ref{lem:trans2} below that for any system satisfying the condition \eqref{eq:matricesplanar} the modified self-affine transversality condition does
hold for $O(d)$ equipped with the Haar measure (recall that $O(d)\subset GL_2(\mathbb R)$ denotes the orthogonal group). However, this is not the case here. For $G\in O(d)$, the
Furstenberg measure for the system $\Phi_{\Av, G(\tv)}$ is the Dirac measure supported on $V=\mathrm{span}\{(0,1)\}$ and $\dimh\proj_{V^{\perp}}\pi_G\mu<1$.
Therefore $\dimh\pi_G\mu<\diml\mu$ for all $G\in O(d)$. Observe that therefore the claim of Theorem \ref{thm:translationtransversal} does not hold, and consequently, the modified self-affine transversality condition does not hold.
\end{remark}

We say that two affine iterated function systems $\Fb$ and $\Psi$ are \emph{equivalent} if the self-affine set of $\Psi$ is an isometric copy of the self-affine set of $\Fb$. This equivalence obviously preserves all the dimensional and separation properties of the self-affine set.

Let $\eta>0$ and $m$ be a probability measure on a group $\mathcal{U}$ contained in $\{ u\in GL_d(\R) : \eta < \alpha_d(u) \le \alpha_1(u) < \eta^{-1} \}$. Notice that all the eigenvalues of the matrices in $\mathcal U$ are one in modulus. We will introduce a method which allows us to handle matrices as parameters when using the modified self-affine transversality condition. If $u\in \mathcal U$ and $\Av = (A_1,\ldots,A_N) \in GL_d(\R)^N$, then we define $u(\Av) = (u^{-1}A_1u, \dots, u^{-1}A_Nu)$. Recall that $u(\tv) = (uv_1, \dots, uv_N)$ for $\tv = (v_1,\ldots,v_N) \in (\R^d)^N$.

\begin{lemma}\label{lem:equivalence}
  If $\tv \in (\R^d)^N$ and $\Av \in GL_d(\R)^N$, then the affine iterated function systems $\Fb_{u(\Av),\tv}$ and $\Fb_{\Av,u(\tv)}$ are equivalent for all $u \in \mathcal U$.
\end{lemma}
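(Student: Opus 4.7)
The plan is to exhibit a linear conjugacy between the two IFSs via the map $u$ and then invoke uniqueness of the self-affine attractor.

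First I would write out the generators explicitly. The $i$-th map of $\Fb_{u(\Av),\tv}$ is $f_i(x) = u^{-1}A_i u\cdot x + v_i$, while the $i$-th map of $\Fb_{\Av,u(\tv)}$ is $g_i(y) = A_i y + u v_i$. A direct calculation would then give, for every $i \in \{1,\ldots,N\}$,
$$
(u \circ f_i \circ u^{-1})(y) = u\bigl(u^{-1}A_i u \cdot u^{-1}y + v_i\bigr) = A_i y + u v_i = g_i(y),
$$
so the affine map $u$ simultaneously conjugates every generator of $\Fb_{u(\Av),\tv}$ to the corresponding generator of $\Fb_{\Av,u(\tv)}$.

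Second, I would push this conjugacy through the Hutchinson invariance equation. Writing $E_1 = E_{u(\Av),\tv}$ and $E_2 = E_{\Av,u(\tv)}$, the set $u(E_1)$ satisfies
$$
u(E_1) = u\Bigl(\bigcup_{i=1}^N f_i(E_1)\Bigr) = \bigcup_{i=1}^N (u \circ f_i \circ u^{-1})(u(E_1)) = \bigcup_{i=1}^N g_i(u(E_1)),
$$
so $u(E_1)$ is a non-empty compact invariant set for $\Fb_{\Av,u(\tv)}$. Hutchinson's uniqueness theorem then forces $u(E_1) = E_2$.

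Third, to justify calling the two systems \emph{equivalent}, I would argue that $u$ can be interpreted as an isometry. Since $\mathcal U$ is a subgroup of $GL_d(\R)$ with operator norms uniformly bounded in $[\eta,\eta^{-1}]$, it is a relatively compact subgroup; the classical Weyl averaging argument then produces an inner product on $\R^d$, equivalent to the standard one, with respect to which every $u \in \mathcal U$ is orthogonal. Hence $u(E_1)$ is an isometric copy of $E_1$ under this equivalent Euclidean structure, yielding the equivalence of the two IFSs. In the main application $\mathcal U = O(d)$ (cf.\ Remark \ref{ex:nonsharpness}) this last step is immediate because $u$ is already a Euclidean isometry.

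The main obstacle, if any, is the final step: interpreting "isometric copy" correctly when $\mathcal U$ is not literally contained in $O(d)$. The algebraic content of the lemma, namely the conjugacy identity $u \circ f_i = g_i \circ u$ and the consequent $u(E_1) = E_2$, is a one-line computation, so essentially all the work lies in choosing the right metric.
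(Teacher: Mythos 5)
Your argument uses the same key conjugacy identity as the paper, $u \circ f(u^{-1}A_iu,\,v_i) = f(A_i,\,uv_i) \circ u$, and passing it through the Hutchinson fixed-point equation to conclude $u(E_{u(\Av),\tv}) = E_{\Av,u(\tv)}$ is exactly the (standard) step the paper leaves implicit. You also correctly notice that for a general bounded group $\mathcal U$ the Weyl averaging gives orthogonality only with respect to an equivalent inner product, so the attractors are bi-Lipschitz rather than Euclidean-isometric copies; since the paper invokes equivalence only to transport dimensional and separation properties (bi-Lipschitz invariants), and every $\mathcal U$ actually used is $SO(2)$ or $O(d)$, this distinction is harmless and your proof is sound.
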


\begin{proof}
Recall that $f(A,v)$ is the affine mapping $x \mapsto Ax+v$. Let $\Av = (A_1,\dots,A_N) \in GL_d(\R)^N$ and $\tv = (v_1,\dots, v_N) \in (\R^d)^N$. Clearly, for each $x\in \R^d$ we have
\[
f(u^{-1}A_iu,v_i)(x)= f(u^{-1}, 0) ( f(A_i, u(v_i)) (f(u, 0)(x)))
\]
and $f(u^{-1},0) (f(u, 0) (x)) = x$. Thus, $\Fb_{u(\Av),\tv}$ and $\Fb_{\Av,u(\tv)}$ are equivalent.
\end{proof}

We define a partition of $GL_d(\R)^N$ by setting an equivalence relation on $GL_d(\R)^N$ as follows: if $\Av, {\bf B}\in GL_d(\R)^N$, then we say that $\Av\sim{\bf B}$ if and only if there exists $u\in \mathcal U$ such that ${\bf B} =u(\Av)$. Since $\mathcal{U}$ is a group $\sim$ defines an equivalence relation. Hence
$$
  \mathcal{P}(\Av)=\{{\bf B}\in GL_d(\R)^N : \Av\sim{\bf B}\}.
$$
is a partition of $GL_d(\R)^N$.
Since $GL_d(\R)^N$ equipped with the distance $d(\Av,{\bf B})=\max_i\|A_i-B_i\|$ is a separable metric space (matrix tuples with rational entries form a countable dense subset), the $\sigma$-algebra $\mathcal{B}$ of Borel sets of $GL_d(\R)^N$ is countably generated by $\{X_1,X_2,\dots\}$, where $X_i$ is open in $GL_d(\R)^N$. By defining $X'_i=\bigcup_{\Av\in X_i}\mathcal{P}(\Av)$, the set $\{X_1',X_2',\dots\}$ generates the $\sigma$-algebra $\mathcal{B}_{\mathcal{P}}=\left\{X\in\mathcal{B} : X=\bigcup_{\Av\in X}\mathcal{P}(\Av)\right\}$. Indeed, we clearly have $\sigma(\{X_1',X_2',\dots\})\subset\mathcal{B}_{\mathcal{P}}$. On the other hand, if $X\in\mathcal{B}_{\mathcal{P}}$ is open, then $X=\bigcup_j X_{i_j}$. But now $X=\bigcup_{\Av\in X}\mathcal{P}(\Av)=\bigcup_j\bigcup_{\Av\in X_{i_j}}\mathcal{P}(\Av)=\bigcup_j X_j'$ and thus $\sigma(\{X_1',X_2',\dots\})\supset\mathcal{B}_{\mathcal{P}}$.

By Rokhlin's disintegration theorem (see \cite{Rohlin52} and \cite{Simmons2012}), for any finite measure $\mathcal{M}$ on $GL_d(\R)^N$, there exists a family of measures $\{\mathcal{M}^{\mathcal{P}(\Av)}\}$ such that $\mathcal{M}^{\mathcal{P}(\Av)}$ is uniquely defined for $\mathcal{M}$-almost every $\Av$, $\mathcal{M}^{\mathcal{P}(\Av)}$ is supported on $\mathcal{P}(\Av)$, and, moreover,
$$
\mathcal{M}=\int\mathcal{M}^{\mathcal{P}(\Av)}\,\mathrm{d}\mathcal{M}(\Av).
$$
The combination of Lemma~\ref{lem:equivalence} and the definition of $\mathcal{P}$ implies that we have the same Furstenberg measures for all $u$ on the partition elements. More precisely, for each ${\bf B}\in\mathcal{P}(\Av)$ there is $u\in\mathcal{U}$ such that $\Phi_{{\bf B},{\bf v}}$ is equivalent to $\Phi_{{\bf A},u({\bf v})}$ whose Furstenberg measures are independent of $u\in\mathcal{U}$.

\begin{theorem}\label{thm:aeformeasure}
  Let $\tv \in (\R^d)^N$, $\eta>0$, and $m$ be a probability measure on a group $\mathcal{U}$ contained in $\{ u\in GL_d(\R) : \eta < \alpha_d(u) \le \alpha_1(u) < \eta^{-1} \}$. Furthermore, let $\mathcal{M}$ be a probability measure on $GL_d(\R)^N$ and let $\xi_{\Av} \colon \mathcal{U} \to GL_d(\R)^N$, $\xi_{\Av}(u)=u(\Av)$, be such that $\mathcal{M}^{\mathcal{P}(\Av)}=\xi_\Av m$ for $\mathcal{M}$-almost all $\Av$. Assume that $\nu$ is an ergodic measure on $\Sigma$ satisfying the Ledrappier-Young formula and $(\mathcal U, m)$ satisfies the modified self-affine transversality condition for $\mathcal M$-almost all tuples $\Av$ of contractive matrices. Then
  $$
    \dimh \pi_{\Av,\tv}\nu=\diml\nu
  $$
  for $\mathcal{M}$-almost all tuples $\Av\in GL_d(\R)^N$ of contractive matrices for which $\Fb_{\Av, \tv}$ satisfies the strong separation condition or $\diml \nu\leq d-1$.
\end{theorem}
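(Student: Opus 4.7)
The plan is to combine Lemma~\ref{lem:equivalence} with Rokhlin's disintegration theorem and then apply (the argument underlying) Theorem~\ref{thm:translationtransversal} pointwise on each partition element $\mathcal{P}(\Av)$. By hypothesis the Rokhlin disintegration takes the form
\[
  \mathcal{M} = \int \mathcal{M}^{\mathcal{P}(\Av)} \,\mathrm{d}\mathcal{M}(\Av) = \int \xi_\Av m \,\mathrm{d}\mathcal{M}(\Av),
\]
so by Fubini it suffices to show that for $\mathcal{M}$-a.e.\ tuple $\Av_0$ of contractive matrices, the equality $\dimh \pi_{u(\Av_0),\tv}\nu = \diml \nu$ holds for $m$-a.e.\ $u \in \mathcal{U}$ such that $\Fb_{u(\Av_0),\tv}$ satisfies the strong separation condition or $\diml \nu \le d-1$.

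First I would fix an $\mathcal{M}$-typical $\Av_0$, meaning one for which $(\mathcal{U}, m)$ satisfies the modified self-affine transversality condition for $\Av_0$. Since $\mathcal{U} \subset \{u\in GL_d(\R) : \eta < \alpha_d(u) \le \alpha_1(u) < \eta^{-1}\}$, conjugation by any $u \in \mathcal{U}$ distorts the singular values of any product $A_{\iii|_n}$ by a factor bounded uniformly in $n$, and hence $\chi_i(u(\Av_0),\nu) = \chi_i(\Av_0,\nu)$ for every $i$ and every $u$; in particular $\diml \nu$ is invariant along the orbit $\mathcal{P}(\Av_0)$. By Lemma~\ref{lem:equivalence}, $\Fb_{u(\Av_0),\tv}$ is conjugate to $\Fb_{\Av_0,u(\tv)}$ by the bi-Lipschitz map $x \mapsto ux$, which preserves Hausdorff dimensions of the push-down measures and the strong separation condition.

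Next I would run the proof of Theorem~\ref{thm:translationtransversal} rather than invoking its statement, because the statement requires SSC to hold uniformly in $u$. Since $\nu$ satisfies the Ledrappier-Young formula and $(\mathcal{U}, m)$ satisfies modified transversality for $\Av_0$, Proposition~\ref{thm:projection} combined with the LY formula gives
\[
  \dim \pi_{\Av_0,u(\tv)}\nu = \diml \nu - \frac{H_{\Av_0,u(\tv),\nu}}{\chi_d(\Av_0,\nu)}
\]
for $m$-a.e.\ $u$. For every such $u$ satisfying SSC for $\Fb_{\Av_0,u(\tv)}$, \cite[Corollary~2.8]{BaranyKaenmaki} forces $H_{\Av_0,u(\tv),\nu} = 0$; if instead $\diml \nu \le d-1$, the projection argument at the end of Theorem~\ref{thm:translationtransversal} yields the same conclusion. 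Translating back via Lemma~\ref{lem:equivalence} and pushing forward by $\xi_{\Av_0}$ gives the $\mathcal{M}^{\mathcal{P}(\Av_0)}$-a.e.\ version of the claim on $\mathcal{P}(\Av_0)$, and integrating with respect to $\mathcal{M}$ completes the argument.

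The main obstacle is the measure-theoretic bookkeeping: one must confirm that the set of $(\Av_0, u)$ on which all the almost-sure statements (transversality, the intermediate projection identity, the SSC-or-dimension alternative) hold simultaneously is jointly measurable and has the correct conditional mass, so that the partition-element statement assembles into the required global $\mathcal{M}$-a.e.\ statement. This is the point at which the reduction via $\mathcal{P}(\Av)$ becomes rigorous, and it is essentially a Fubini argument once the measurability of each condition is verified. A secondary technicality, handled automatically by the uniform bound on $\mathcal{U}$, is the invariance of the Lyapunov spectrum and of the Ledrappier-Young formula under the bounded conjugation action.
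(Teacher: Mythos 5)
Your proposal is correct and follows essentially the same route as the paper's proof: Rokhlin disintegration of $\mathcal{M}$ along the $\mathcal{P}$-partition, Lemma~\ref{lem:equivalence} to transfer between $\Fb_{u(\Av),\tv}$ and $\Fb_{\Av,u(\tv)}$, and the Ledrappier-Young/transversality argument of Theorem~\ref{thm:translationtransversal}. Your remark that one should run the proof of Theorem~\ref{thm:translationtransversal} pointwise on each leaf rather than invoke its statement (since the stated hypothesis demands SSC for all $u$, whereas Theorem~\ref{thm:aeformeasure} only posits SSC for the particular $\Av$) is a legitimate and slightly more careful way of closing the same argument; the paper's terser phrasing glosses over this, relying implicitly on the fact that in the actual applications (via Lemma~\ref{lem:ssc-d}) the SSC holds uniformly on the partition fibres.
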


\begin{proof}
  Observe that, by Theorem~\ref{thm:translationtransversal}, if $\Phi_{\Av,u(\tv)}$ satisfies the strong separation condition for all $u \in \mathcal U$ and $\mathcal M$-almost all tuples $\Av$ of contractive matrices or $\diml\nu\le d-1$, then
  $$
    \dim\pi_{\Av,u(\tv)}\nu=\diml\nu
  $$
  for $m$-almost all $u \in \mathcal U$ and $\mathcal M$-almost all tuples $\Av$ of contractive matrices.
  By Lemma \ref{lem:equivalence}, we have $\dimh \pi_{u(\Av),\tv}\nu = \dimh \pi_{\Av,u(\tv)}\nu$ for all $u \in \mathcal U$. Thus, by Rokhlin's disintegration theorem,
  $$
    \mathcal{M}=\int\xi_{\Av}m\,\mathrm{d}\mathcal{M}(\Av),
  $$
  and hence, $\dimh\pi_{\Av,\tv}\nu=\diml\nu$ for $\mathcal{M}$-almost all tuples $\Av$ of contractive matrices.
\end{proof}

\section{Dimension via sub-systems} \label{sec:sub-systems}

Let $\mathbb{A}$ be the collection of all tuples $\Av \in GL_d(\R)^N$ of contractive matrices that satisfy $\chi_i(\Av,\mu) \ne \chi_j(\Av,\mu)$ for $i \ne j$ where $\mu$ is an ergodic $s$-equilibrium state of $\Av$ and $s=\dimaff(\Av)$.
We note that the functions $\Av\mapsto\chi_i(\Av,\nu)$ and $\nu\mapsto\chi_i(\Av,\nu)$ are lower semi-continuous in the usual and weak*-topology. Thus, by \cite[Propositions 3.4 and 3.6]{KaenmakiBing} and \cite[Proposition~5.2]{FengShmerkin14}, the set $\mathbb{A}$ is Borel measurable in $GL_d(\R)^N$.

In this section, we show that equilibrium states can be approximated by step-$n$ Bernoulli measure arbitrarily well on $\mathbb{A}$. Each affine iterated function system contains a well-approximating sub-system in which the Ledrappier-Young formula holds. In our setting, this observation yields a dimension formula for the self-affine set. Motivation for this section is purely technical: at the moment, we do not know whether ergodic equilibrium states satisfy the Ledrappier-Young formula -- Lemma \ref{lem:safreg} gathers the current knowledge on this problem.

\begin{proposition}\label{prop:FengShmerkin}
	For every $\Av\in\mathbb{A}$ and $\eps>0$ there exist $n \in \N$ and a step-$n$ Bernoulli measure $\nu$ such that $\diml\nu \ge \dimaff\Av - \eps$.
\end{proposition}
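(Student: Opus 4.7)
The plan is to take $\nu_n$ to be a step-$n$ Bernoulli approximation of the ergodic $s$-equilibrium state of $\Av$. Let $\mu$ be an ergodic $s$-equilibrium state of $\Av$ for $s=\dimaff(\Av)$, which exists by \cite[Theorems 2.6 and 4.1]{Kaenmaki04}. Since $\Av\in\mathbb{A}$, the Lyapunov exponents $\chi_1(\Av,\mu)<\cdots<\chi_d(\Av,\mu)$ are pairwise distinct and strictly positive, and the equilibrium equation~\eqref{eq:eq-state} combined with the Lyapunov dimension formula yields $\diml\mu=s$. Now define $\nu_n$ to be the step-$n$ Bernoulli measure on $(\Sigma_n)^\N$ determined by the probability vector $\bigl(\mu([\iii])\bigr)_{\iii\in\Sigma_n}$. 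Viewed as a $\sigma$-invariant measure on $\Sigma$, it agrees with $\mu$ on every cylinder of length at most $n$, so that $h_{\nu_n}=-\tfrac{1}{n}\sum_{\iii\in\Sigma_n}\mu([\iii])\log\mu([\iii])\to h_\mu$ as $n\to\infty$ by~\eqref{eq:entropy}.

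The main task is to prove the analogous convergence $\chi_k(\Av,\nu_n)\to\chi_k(\Av,\mu)$ for every $k\in\{1,\ldots,d\}$. It is convenient to work with the partial sums $\sum_{j=1}^{k}\chi_j(\Av,\nu)=-\lim_m\tfrac{1}{m}\sum_{\iii\in\Sigma_m}\nu([\iii])\log\phi^k(A_\iii)$. Submultiplicativity $\phi^k(AB)\le\phi^k(A)\phi^k(B)$ together with the Bernoulli product structure of $\nu_n$ on $n$-blocks gives, via Fekete's lemma, the lower bound
\[
 \sum_{j=1}^{k}\chi_j(\Av,\nu_n)\ \ge\ -\tfrac{1}{n}\sum_{\iii\in\Sigma_n}\mu([\iii])\log\phi^k(A_\iii),
\]
whose right-hand side tends to $\sum_{j=1}^{k}\chi_j(\Av,\mu)$ by Kingman's subadditive ergodic theorem for $\mu$. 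This is essentially lower semi-continuity of the partial Lyapunov sums along the weak-$*$ convergence $\nu_n\to\mu$. The matching upper bound is the main obstacle, and is where the distinctness hypothesis $\Av\in\mathbb{A}$ enters essentially. The plan here is to exploit the spectral gap between $\chi_k(\Av,\mu)$ and $\chi_{k+1}(\Av,\mu)$ through the Oseledets decomposition of $\mu$: on a set of length-$n$ words of $\mu$-measure arbitrarily close to one, the $k$-dimensional equivariant subspaces of $A_\iii$ align well with those of $A_\jjj$, producing an almost-multiplicative reverse estimate $\phi^k(A_{\iii\jjj})\ge c\,\phi^k(A_\iii)\phi^k(A_\jjj)$. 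Inserting this into the defining limit of $\sum_{j=1}^{k}\chi_j(\Av,\nu_n)$ yields $\limsup_n \sum_{j=1}^{k}\chi_j(\Av,\nu_n)\le \sum_{j=1}^{k}\chi_j(\Av,\mu)$ up to $o(1)$ errors as $n\to\infty$.

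Once both $h_{\nu_n}\to h_\mu$ and $\chi_k(\Av,\nu_n)\to\chi_k(\Av,\mu)>0$ are in hand for each $k$, the Lyapunov dimension becomes a continuous function of these inputs (no denominator degenerates thanks to strict positivity), so $\diml\nu_n\to\diml\mu=\dimaff(\Av)$. Choosing $n$ large enough then yields a step-$n$ Bernoulli measure $\nu$ with $\diml\nu\ge\dimaff(\Av)-\varepsilon$, as required.
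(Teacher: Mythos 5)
Your choice of the step-$n$ Bernoulli measure $\nu_n$ built from the cylinder weights $(\mu([\iii]))_{\iii\in\Sigma_n}$ on the \emph{full} alphabet $\Sigma_n$ is a natural first idea, and you correctly flag the upper bound on the partial Lyapunov sums as the crux. But the sketch of that upper bound has a genuine gap, and it is precisely the gap the paper's proof is engineered to avoid. The reverse estimate $\fii^k(A_{\iii\jjj})\ge c\,\fii^k(A_\iii)\fii^k(A_\jjj)$ that you want to extract from the Oseledets decomposition only holds, with a uniform constant, for $\iii,\jjj$ in a ``good'' subset $G_n\subset\Sigma_n$ of $\mu$-measure close to $1$; but $\nu_n$ still charges the bad words. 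When you compute
\[
\sum_{j=1}^{k}\chi_j(\Av,\nu_n)=-\lim_{m\to\infty}\tfrac{1}{mn}\,\mathbb{E}_{\nu_n}\log\fii^k(A_{\iii_1\cdots\iii_m}),
\]
the expectation runs over $m$-tuples of $n$-blocks, and by the Bernoulli product structure the probability that all $m$ blocks lie in $G_n$ is $\mu(G_n)^m\to 0$ as $m\to\infty$. So your reverse estimate applies only on an exponentially negligible event and does not control the Lyapunov exponents of $\nu_n$. What remains is only the lower-semicontinuity bound $\liminf_n\sum_{j\le k}\chi_j(\Av,\nu_n)\ge\sum_{j\le k}\chi_j(\Av,\mu)$ (your Fekete argument), which is the wrong direction to keep $\diml\nu_n$ from dropping.

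The paper avoids this entirely by \emph{shrinking the alphabet}. Feng and Shmerkin \cite[Theorem 3.3]{FengShmerkin14} supply a sub-alphabet $\Gamma_n\subset\Sigma_n$ with $\sum_{\iii\in\Gamma_n}\mu([\iii])\ge\eta>0$ on which the generated sub-system satisfies the totally dominated splitting condition, so the reverse inequality $\fii^s(A_\iii)\fii^s(A_\jjj)\le C\fii^s(A_{\iii\jjj})$ holds with a constant $C$ that is \emph{uniform in $n$} and valid for \emph{all} finite concatenations of $\Gamma_n$-words, not merely for most pairs. The step-$n$ Bernoulli measure is then taken on $\Gamma_n^{\N}$, so there are no bad words at all. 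Since restricting to $\Gamma_n$ could a priori lose entropy, the paper does not use the probability vector $(\mu([\iii]))$: it instead uses $\pv_n=(\fii^{s_n}(A_\iii))_{\iii\in\Gamma_n}$ with $s_n$ normalizing the sum to one, and proves $s_n\to s$ via Shannon--McMillan--Breiman plus Egorov. With this choice both the entropy and the Lyapunov partial sums of $\nu_{\pv_n}$ become directly comparable, yielding $\diml\nu_{\pv_n}\ge s_n+\log C/(n\log\mathfrak{m}(\Av))\to s$. The reduction to a uniformly dominated sub-system is the missing ingredient in your argument; a pointwise Oseledets alignment does not furnish the uniform reverse submultiplicativity the upper bound actually needs.
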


\begin{proof}
  Fix $\Av = (A_1,\ldots,A_N) \in \mathbb{A}$, define $s =\dimaff\Av$, and let $\mu$ be an ergodic $s$-equilibrium state of $\Av$. By Feng and Shmerkin \cite[Theorem 3.3]{FengShmerkin14}, there exist $\eta>0$ and an infinite set $S \subset \N$ such that for every $n \in S$ there is $\Gamma_n \subset \Sigma_n$ with $\sum_{\iii \in \Gamma_n} \mu([\iii]) \ge \eta$. Moreover, since all the Lyapunov exponents are distinct it also follows that each $(A_\iii)_{\iii \in \Gamma_n}$ satisfies the totally dominated splitting condition and hence, via e.g.\ \cite[Lemma 2.1]{FengShmerkin14}, there exists a constant $C \ge 1$ such that
  \begin{equation} \label{eq:cone-condition}
  \fii^s(A_\iii)\fii^s(A_\jjj) \le C\fii^s(A_{\iii\jjj})
  \end{equation}
  for all $\iii, \jjj \in \bigcup_{k=1}^\infty \Gamma_n^k$, $k \in \N$, $n \in S$, and $s \ge 0$. For each $n \in S$, let us choose $s_n$ such that
  \begin{equation*}
          \sum_{\iii \in \Gamma_n} \fii^{s_n}(A_\iii) = 1.
  \end{equation*}
  We will show that $s_n \to s$ as $n \to \infty$.

  Applying Egorov's Theorem, it follows from \eqref{eq:eq-state} and Shannon-McMillan-Breiman Theorem that there exists a set $E \subset \Sigma$ with $\sum_{\iii \in \Gamma_n} \mu([\iii] \cap E) \ge \eta/2$ such that for every $\iii \in \bigcup_{\jjj \in \Gamma_n} [\jjj] \cap E$
  \begin{equation*}
          \lim_{n \to \infty} \tfrac{1}{n}\log\frac{\mu([\iii|_{n}])}{\fii^{s}(A_{\iii|_{n}})} = 0
  \end{equation*}
  uniformly. This implies that there exists a sequence $(c_n)_{n \in \N}$ of reals such that $\lim_{n \to \infty} \log c_n^{1/n} = 0$ and
  \begin{equation*}
          c_n^{-1}\fii^{s}(A_\iii) \le \mu([\iii]) \le c_n\fii^{s}(A_\iii)
  \end{equation*}
  for all $\iii \in \bigcup_{\jjj \in \Gamma_n} [\jjj] \cap E$ and $n \in S$. Therefore, by \eqref{eq:svf2}, we have
  \begin{equation*}
          c_n^{-1}\eta/2 \le c_n^{-1}\sum_{\iii \in \Gamma_n} \mu([\iii] \cap E) \le \sum_{\iii \in \Gamma_n} \fii^{s}(A_\iii) \le \|\Av\|^{n(s-s_n)} \sum_{\iii \in \Gamma_n} \fii^{s_n}(A_\iii) = \|\Av\|^{n(s-s_n)}
  \end{equation*}
  and thus,
  \begin{equation} \label{eq:sn-lower}
  s-s_n \le \frac{\log c_n^{-1}\eta/2}{n\log\|\Av\|}
  \end{equation}
  for all $n \in S$. On the other hand, again by \eqref{eq:svf2}, we have
  \begin{equation*}
          \sum_{\iii \in \Sigma_n} \fii^{s}(A_\iii) \ge \sum_{\iii \in \Gamma_n} \fii^{s}(A_\iii) \ge \mathfrak{m}(\Av)^{n(s-s_n)} \sum_{\iii \in \Gamma_n} \fii^{s_n}(A_\iii) = \mathfrak{m}(\Av)^{n(s-s_n)}
  \end{equation*}
  and hence,
  \begin{equation} \label{eq:sn-upper}
  s-s_n \ge \frac{\log\sum_{\iii \in \Sigma_n} \fii^{s}(A_\iii)}{n\log\mathfrak{m}(\Av)}
  \end{equation}
  for all $n \in S$. By the definition of the affinity dimension, \eqref{eq:sn-lower} and \eqref{eq:sn-upper} show that $s_n \to s$ as $n \to \infty$.

  Let $\nu_{\pv_n}$ be the Bernoulli measure obtained from the probability vector $\pv_n = (\fii^{s_n}(A_\iii))_{\iii\in\Gamma_n}$. Observe that, by \eqref{eq:cone-condition},
  \begin{align*}
          h_{\pv_n} &= -\sum_{\iii \in \Gamma_n} \nu_{\pv_n}([\iii])\log\fii^{s_n}(A_\iii) \\
          &= -\tfrac{1}{k}\sum_{\iii_1,\ldots,\iii_k \in \Gamma_n} \nu_{\pv_n}([\iii_1]) \cdots \nu_{\pv_n}([\iii_k]) \log\fii^{s_n}(A_{\iii_1}) \cdots \fii^{s_n}(A_{\iii_k}) \\
          & \ge -\tfrac{1}{k}\sum_{\iii_1,\ldots,\iii_k \in \Gamma_n} \nu_{\pv_n}([\iii_1]) \cdots \nu_{\pv_n}([\iii_k]) \log C^{k-1}\fii^{s_n}(A_{\iii_1\cdots\iii_k})
  \end{align*}
  and similarly,
  \begin{align*}
          h_{\pv_n} &= -\sum_{\iii \in \Gamma_n} \nu_{\pv_n}([\iii])\log\fii^{s_n}(A_\iii) \\
          & \le -\tfrac{1}{k}\sum_{\iii_1,\ldots,\iii_k \in \Gamma_n} \nu_{\pv_n}([\iii_1]) \cdots \nu_{\pv_n}([\iii_k]) \log \fii^{s_n}(A_{\iii_1\cdots\iii_k}).
  \end{align*}
  Letting $k \to \infty$ this implies
  \begin{align*}
          \diml\nu_{\pv_n}=\lfloor s_n\rfloor + \frac{h_{\pv_n}-\sum_{j=1}^{\lfloor s_n\rfloor} \chi_j(\Av^n,\pv_n)}{\chi_{\lceil s_n\rceil}(\Av^n,\pv_n)}  &\ge \frac{-\log C}{\chi_{\lceil s_n\rceil}(\Av^n,\pv_n)} + s_n \ge \frac{\log C}{n\log \mathfrak{m}(\Av)} + s_n,
  \end{align*}
  where $\Av^n = (A_\iii)_{|\iii| = n}$. This is what we wanted to show.
\end{proof}

We will next transfer the previous proposition in the form we can use in our setting.

\begin{theorem}\label{thm:almosteverymatrix}
  Let $\tv \in (\R^d)^N$, $\eta>0$, and $m$ be a probability measure on a group $\mathcal{U}$ contained in $\{ u\in GL_d(\R) : \eta < \alpha_d(u) \le \alpha_1(u) < \eta^{-1} \}$. Furthermore, let $\mathcal{M}$ be a probability measure on $GL_d(\R)^N$ such that $\mathcal M(\{ \Av \in GL_d(\R)^N : \|\Av\|<1 \} \setminus \mathbb A) = 0$ and let $\xi_{\Av} \colon \mathcal{U} \to GL_d(\R)^N$, $\xi_{\Av}(u)=u(\Av)$, be such that $\mathcal{M}^{\mathcal{P}(\Av)}=\xi_\Av m$ for $\mathcal{M}$-almost all $\Av$. Assume that $(\mathcal U, m)$ satisfies the modified self-affine transversality condition for $\mathcal M$-almost all tuples $\Av$ of contractive matrices. Then
  $$
    \dimh E_{\Av,\tv}=\udimm E_{\Av,\tv}=\dimaff\Av
  $$
  for $\mathcal{M}$-almost all tuples $\Av\in GL_d(\R)^N$ of contractive matrices for which $\Fb_{\Av, \tv}$ satisfies the strong separation condition or $\dimaff\Av \leq d-1$.
\end{theorem}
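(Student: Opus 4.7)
The plan is to establish $\udimm E_{\Av,\tv}\le\dimaff\Av$ and $\dimh E_{\Av,\tv}\ge\dimaff\Av$ separately. The upper bound is the standard Falconer covering estimate: $E_{\Av,\tv}$ is covered by the ellipsoids $f_{\iii}(B(0,R))$, $\iii\in\Sigma_n$, with $R=\|\tv\|/(1-\|\Av\|)$, and summing the singular value function $\fii^s(A_\iii)$ yields $\udimm E_{\Av,\tv}\le\dimaff\Av$ for every contractive $\Av$; no measure-theoretic assumption is needed here.

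For the lower bound, the natural idea is to combine Proposition \ref{prop:FengShmerkin} with Theorem \ref{thm:aeformeasure}. Given $\Av\in\mathbb{A}$ and $\eps>0$, Proposition \ref{prop:FengShmerkin} supplies a sub-system $\Gamma\subset\Sigma_n$ and a Bernoulli measure $\nu_{\Av}$ on $\Gamma^{\N}$ with $\diml\nu_{\Av}\ge\dimaff\Av-\eps$. The sub-tuple $(A_\iii)_{\iii\in\Gamma}$ satisfies the totally dominated splitting condition, as is evident from the proof of that proposition, so Lemma \ref{lem:safreg} guarantees the Ledrappier--Young formula for $\nu_{\Av}$. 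Ideally, Theorem \ref{thm:aeformeasure} would then give $\dimh\pi_{\Av,\tv}\nu_{\Av}\ge\dimaff\Av-\eps$, and since $\pi_{\Av,\tv}\nu_{\Av}$ is supported in $E_{\Av,\tv}$, letting $\eps\downarrow 0$ would conclude the lower bound.

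The main obstacle is that Theorem \ref{thm:aeformeasure} is stated for a \emph{fixed} ergodic measure $\nu$ on $\Sigma$, while $\nu_{\Av}$ depends on $\Av$ through the weights $\pv_{\Av}=(\fii^{s}(A_\iii))_{\iii\in\Gamma}$ with $s=\dimaff\Av$. I plan to circumvent this by enumerating the countable collection of triples $(n,\Gamma,\pv)$ where $\pv$ is a \emph{rational} probability vector on $\Gamma\subset\Sigma_n$, applying Theorem \ref{thm:aeformeasure} to each Bernoulli measure $\nu_{n,\Gamma,\pv}$ separately, and intersecting the resulting $\mathcal{M}$-full-measure sets. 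On the intersection $\Omega$, still of full $\mathcal{M}$-measure, the identity $\dimh\pi_{\Av,\tv}\nu_{n,\Gamma,\pv}=\diml\nu_{n,\Gamma,\pv}$ holds simultaneously for every rational triple. For $\Av\in\Omega$ and $\eps>0$ I would pick $(n,\Gamma)$ from Proposition \ref{prop:FengShmerkin} and approximate $\pv_{\Av}$ by a rational $\pv^r$; the continuity of entropy and of the Lyapunov exponents as functions of the Bernoulli weights, combined with the openness of the totally dominated splitting condition \cite[Theorem~B]{BochiGourmelon}, which preserves the hypothesis of Lemma \ref{lem:safreg} under small perturbations of $\pv$, gives $\diml\nu_{n,\Gamma,\pv^r}\ge\dimaff\Av-2\eps$.

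It remains to verify the side hypothesis of Theorem \ref{thm:aeformeasure} for the rational approximant $\nu_{n,\Gamma,\pv^r}$. If $\Fb_{\Av,\tv}$ satisfies the strong separation condition then so does the sub-IFS indexed by $\Gamma$, since the images $f_\iii(E_{\Av,\tv})$ are pairwise disjoint for distinct $\iii\in\Sigma_n$ and the self-affine set of the sub-IFS is contained in $E_{\Av,\tv}$. Otherwise, one has $\diml\nu_{n,\Gamma,\pv^r}\le\dimaff((A_\iii)_{\iii\in\Gamma})\le\dimaff\Av\le d-1$, where the middle inequality follows from $P_{(A_\iii)_{\iii\in\Gamma}}\le P_{\Av^n}=nP_\Av$ for $\Av^n=(A_\iii)_{\iii\in\Sigma_n}$. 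Theorem \ref{thm:aeformeasure} therefore produces $\dimh\pi_{\Av,\tv}\nu_{n,\Gamma,\pv^r}\ge\dimaff\Av-2\eps$; letting $\eps\downarrow 0$ completes the lower bound and, combined with the upper bound, finishes the proof.
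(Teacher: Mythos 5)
Your plan diverges from the paper's proof in a substantive way, and the divergence introduces a genuine gap. The paper first establishes, for each \emph{fixed} $\Av\in\mathbb{A}$ separately, the inequality $\dimh E_{\Av,u(\tv)}>\dimaff\Av-\eps$ for $m$-almost every $u\in\mathcal{U}$: it chooses the Feng--Shmerkin sub-system $\Gamma$ (which, together with its Bernoulli measure $\nu$, depends on $\Av$), notes that $(A_\iii)_{\iii\in\Gamma}$ has totally dominated splitting so that $\nu$ satisfies the Ledrappier--Young formula, and then applies the fixed-$\Av$ result Theorem~\ref{thm:translationtransversal}. Only afterwards does it pass to $\mathcal{M}$-almost every $\Av$, via Lemma~\ref{lem:equivalence} and Rokhlin's disintegration. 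Crucially, the quantity transferred through the disintegration is the \emph{set} dimension $\dimh E_{\Av,u(\tv)}=\dimh E_{u(\Av),\tv}$, which does not remember $\nu$ at all; this is precisely why the $\Av$-dependence of the sub-system and its Bernoulli measure causes no trouble.

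You instead try to route the argument through Theorem~\ref{thm:aeformeasure} (the ``$\mathcal{M}$-a.e.~$\Av$'' result for a single fixed measure), handling the $\Av$-dependence of $\nu_\Av$ by a countable rational approximation. This does not work as written. For a fixed rational triple $(n,\Gamma,\pv)$, Theorem~\ref{thm:aeformeasure} requires $\nu_{n,\Gamma,\pv}$ to satisfy the Ledrappier--Young formula, and via Lemma~\ref{lem:safreg} this rests on the totally dominated splitting condition for the sub-tuple $(A_\iii)_{\iii\in\Gamma}$ --- an $\Av$-dependent hypothesis. It holds for the particular $\Av$ for which Feng--Shmerkin produced $\Gamma$, and on an open neighbourhood of that $\Av$, but not for $\mathcal{M}$-a.e.~$\Av$. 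Consequently your claim that each application of Theorem~\ref{thm:aeformeasure} ``produces an $\mathcal{M}$-full-measure set'' is false; the conclusion is only available on the ($\Av$-dependent) locus where the hypotheses are met, and intersecting over rational triples does not recover a full-measure set on which the approximation step can be run. Your remark about openness addresses only perturbation of $\pv$ for a fixed $\Av$, not the range of $\Av$. There is also a structural mismatch you pass over: Theorem~\ref{thm:aeformeasure} is phrased for a measure that is $\sigma$-ergodic on $\Sigma$, whereas the step-$n$ Bernoulli measure on $\Gamma^{\N}$ is only $\sigma^n$-ergodic; applying the theorem to it really means applying it to the sub-IFS on alphabet $\Gamma$, where the ambient measure $\mathcal{M}$ on $GL_d(\R)^N$, the transversality hypothesis, and the strong-separation clause would all need to be re-interpreted. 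The paper avoids all of these issues by postponing the ``a.e.~$\Av$'' step until only the ($\nu$-independent) set dimension remains. A repair of your argument would need, for each rational triple, to restrict $\mathcal{M}$ to the $\mathcal{U}$-invariant open set where the sub-tuple has totally dominated splitting and to re-verify the disintegration hypothesis there; at that point you have essentially reproduced the paper's cleaner route of proving \eqref{eq:needtoprove} for each fixed $\Av$ and disintegrating last.
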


\begin{proof}
  Let us first show that for every $\Av\in\mathbb{A}$ for which $\Fb_{\Av, \tv}$ satisfies the strong separation condition or $\dimaff \Av\leq d-1$, and for every $\varepsilon>0$ it holds that
  \begin{equation}\label{eq:needtoprove}
    \dimh E_{\Av,u(\tv)} > \dimaff\Av-\varepsilon
  \end{equation}
  for $m$-almost every $u\in\mathcal U$. Fix such a tuple $\Av\in\mathbb{A}$ and $\eps>0$. Then, as in the proof of Proposition~\ref{prop:FengShmerkin}, we find a finite set $\Gamma\subset\Sigma_*$ and a Bernoulli measure $\nu$ on $\Gamma^{\mathbb{N}}$ such that $(A_{\iii})_{\iii\in\Gamma}$ satisfies the totally dominated splitting condition and $\diml\nu\geq \dimaff \Av-\varepsilon$. By Lemma~\ref{lem:safreg}, $\nu$ satisfies the Ledrappier-Young formula. Hence, by Theorem~\ref{thm:translationtransversal}, $\dim\pi_{\Av,u(\tv)}\nu=\diml\nu$ for $m$-almost every $u \in \mathcal U$ and we have finished the proof of \eqref{eq:needtoprove}.

  By Lemma \ref{lem:equivalence}, we have $\dimh E_{\Av,u(\tv)} = \dimh E_{u(\Av),\tv}$ for all $u \in \mathcal U$. It is also easy to see that $\dimaff \Av = \dimaff u(\Av)$ for every $u\in\mathcal{U}$. Since $\mathbb{A}$ has full measure with respect to $\mathcal{M}$, by Rokhlin's disintegration theorem, we thus have $\dimh E_{\Av,\tv} > \dimaff\Av-\varepsilon$ for $\mathcal{M}$-almost all tuples $\Av$ of contractive matrices for which $\Fb_{\Av, \tv}$ satisfies the strong separation condition or $\dimaff\Av \leq d-1$. Since $\varepsilon>0$ was arbitrary, the proof is complete.
\end{proof}

\section{The planar case} \label{sec:first-proof}

In this section, we prove Theorem \ref{thm:mainplanar} as an application of Theorems~\ref{thm:aeformeasure} and \ref{thm:almosteverymatrix} by showing that the modified self-affine transversality condition holds for some pair $(\mathcal U, m)$ for all $\Av\in \mathcal A_\tv$. Recall that if $\tv=(v_1,\dots,v_N)\in(\R^2)^N$ is such that $v_i\neq v_j$ for every $i\neq j$, then
$$
  \mathcal{A}_{\tv}=\biggl\{\Av\in GL_2(\R)^N : 0<\max_{i\neq j}\frac{\|A_i\|+\|A_j\|}{|v_i-v_j|}\cdot\frac{\|\tv\|}{1-\|\Av\|}<\frac{\sqrt{2}}{2}\biggr\}.
$$
Let $SO(2)$ be the special orthogonal group of $GL_2(\R)$. Note that $SO(2) = \{u_{\alpha}\}_{\alpha \in \R}$, where
\[
  u_\alpha =
  \begin{pmatrix}
    \cos(\alpha) & -\sin(\alpha) \\
    \sin(\alpha) & \cos(\alpha)
  \end{pmatrix}
\]
is a rotation by an angle $\alpha\in \R$. To simplify notation, we will denote $\pi_{\Av,u_\alpha(\tv)}$ by $\pi_{\alpha}$. Furthermore, we denote the differential with respect to $\alpha$, evaluated at $\alpha_0$, by $\partial_{\alpha=\alpha_0}$. The following is a transversality lemma suitable for our purposes. %to show the modified transversality condition.

\begin{lemma}\label{lem:trans}
  Let $\tv = (v_1,\dots, v_N) \in (\R^2)^N$ be such that $v_i\neq v_j$ for $i\neq j$ and $\Av = (A_1,\dots,A_N) \in \mathcal{A}_{\tv}$. Then there exists $\delta>0$ such that for every $\alpha_0\in[0,2\pi]$, for every $v\in\R^2$ with $|v|=1$, and for every $\bi,\bj\in\Sigma$ with $\iii|_1\neq \jjj|_1$
  $$
    |\inner{v}{\pi_{\alpha_0}(\ii)-\pi_{\alpha_0}(\bj)}|\ge\delta\quad\text{or}\quad|\partial_{\alpha=\alpha_0}\inner{v}{\pi_{\alpha}(\ii)-\pi_{\alpha}(\bj)}|\ge\delta.
  $$
\end{lemma}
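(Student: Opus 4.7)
\medskip
\noindent\emph{Proof plan.} The strategy is to decompose $\pi_\alpha(\iii)-\pi_\alpha(\jjj)$ around its leading term $u_\alpha(v_{i_1}-v_{j_1})$ and to exploit the elementary planar fact that a rotation and its angular derivative are related by the orthogonal transformation $J$ of rotation by $\pi/2$, namely $\partial_\alpha u_\alpha=Ju_\alpha$. Since $u_{\alpha_0}(v_{i_1}-v_{j_1})$ and $Ju_{\alpha_0}(v_{i_1}-v_{j_1})$ are perpendicular and of equal length $|v_{i_1}-v_{j_1}|$, for any unit $v$ the Pythagorean identity forces the larger of their inner products with $v$ to be at least $|v_{i_1}-v_{j_1}|/\sqrt{2}$. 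The defining bound on $\mathcal{A}_\tv$ is then exactly what is needed for this lower bound to survive the perturbation coming from the tails of the defining series.

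First I would use the recursion
$$
\pi_\alpha(\iii)-\pi_\alpha(\jjj)=u_\alpha(v_{i_1}-v_{j_1})+A_{i_1}\pi_\alpha(\sigma\iii)-A_{j_1}\pi_\alpha(\sigma\jjj),
$$
together with the uniform tail bound $|\pi_\alpha(\sigma\iii)|\le\|\tv\|/(1-\|\Av\|)$ coming from the geometric estimate on \eqref{eq:natproj}. Termwise differentiation in $\alpha$ is justified by the same uniform bound, and using $\partial_\alpha u_\alpha=Ju_\alpha$ it yields an exactly parallel identity for $\partial_{\alpha=\alpha_0}(\pi_\alpha(\iii)-\pi_\alpha(\jjj))$ with the leading rotation $u_{\alpha_0}$ replaced by $Ju_{\alpha_0}$, and with a perturbation again bounded in norm by $(\|A_{i_1}\|+\|A_{j_1}\|)\cdot\|\tv\|/(1-\|\Av\|)$.

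Pairing both expressions with the unit vector $v$ and invoking the orthogonality described above, the triangle inequality shows that the maximum of the two quantities in the statement of the lemma is at least
$$
\frac{|v_{i_1}-v_{j_1}|}{\sqrt{2}}-(\|A_{i_1}\|+\|A_{j_1}\|)\frac{\|\tv\|}{1-\|\Av\|},
$$
which is strictly positive by the defining inequality of $\mathcal{A}_\tv$. Since the index pairs $(i_1,j_1)$ with $i_1\ne j_1$ form a finite set and the strict inequality in the definition of $\mathcal{A}_\tv$ is uniform in them, a single $\delta>0$ works for all $\iii,\jjj$ with $\iii|_1\ne\jjj|_1$ and for all $v$ and $\alpha_0$.

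There is no substantial obstacle in carrying this out; the only point deserving care is the Pythagorean constant $1/\sqrt{2}$, which is sharp for this elementary ``either the value or the derivative is large'' dichotomy and is precisely the reason the threshold $\sqrt{2}/2$ appears in the definition of $\mathcal{A}_\tv$. A relaxation of that threshold would force a more refined argument, for instance one exploiting higher derivatives in $\alpha$ or a more careful geometric analysis of how $v$ can simultaneously be nearly orthogonal to both $u_{\alpha_0}(v_{i_1}-v_{j_1})$ and $Ju_{\alpha_0}(v_{i_1}-v_{j_1})$, which in the plane is impossible but quantitatively constrained by the Pythagorean identity alone.
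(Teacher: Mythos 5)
Your argument is correct and follows essentially the same route as the paper's proof: both hinge on the recursion $\pi_\alpha(\iii)-\pi_\alpha(\jjj)=u_\alpha(v_{i_1}-v_{j_1})+A_{i_1}\pi_\alpha(\sigma\iii)-A_{j_1}\pi_\alpha(\sigma\jjj)$, the identity $\partial_\alpha\pi_\alpha=\pi_{\alpha+\pi/2}$, and the Pythagorean fact that $\langle v,u_\alpha(v_{i_1}-v_{j_1})\rangle^2+\langle v,Ju_\alpha(v_{i_1}-v_{j_1})\rangle^2=|v_{i_1}-v_{j_1}|^2$ for unit $v$. The only difference is presentational: the paper argues by contradiction and compactness (letting $\delta\to0$ and extracting a configuration where both quantities vanish exactly), whereas you argue directly and thereby obtain the explicit threshold $\delta=\min_{i\ne j}\bigl(|v_i-v_j|/\sqrt{2}-(\|A_i\|+\|A_j\|)\|\tv\|/(1-\|\Av\|)\bigr)$, with the defining bound on $\mathcal A_\tv$ entering at the identical point.
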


\begin{proof}
  Let us argue by contradiction. Suppose that for every $\delta>0$ there exists $\alpha_0\in[0,2\pi]$, $v\in\R^2$ with $|v|=1$ and $\bi,\bj \in \Sigma$ with $i_1\neq j_1$ such that
  $$
    |\inner{v}{\pi_{\alpha_0}(\ii)-\pi_{\alpha_0}(\jj)}|<\delta \quad \text{and} \quad |\partial_{\alpha=\alpha_0} \inner{v}{\pi_{\alpha}(\ii)-\pi_{\alpha}(\bj)}|<\delta.
  $$
  Let $(\delta_n)_{n \in \N}$ be a sequence of positive reals such that $\delta_n\to0$ as $n\to\infty$. By compactness, there exist $\alpha_0\in[0,2\pi]$, $v\in\R^2$ with $|v|=1$ and  $\ii,\bj\in\Sigma$ with $\bi|_1\neq \bj|_1$ such that
  $$
    |\inner{v}{\pi_{\alpha_0}(\ii)-\pi_{\alpha_0}(\bj)}|=0 \quad \text{and} \quad |\partial_{\alpha=\alpha_0}\inner{v}{\pi_{\alpha}(\ii)-\pi_{\alpha}(\bj)}|=0.
  $$
  Since $\partial_\alpha u_\alpha^{-1} = u_{\alpha+\pi/2}^{-1}$ we have $\partial_\alpha\pi_{\alpha}(\ii)=\pi_{\alpha+\pi/2}(\ii)$ for every $\ii\in\Sigma$. Thus,
  \begin{align*}
    0&=\inner{v}{\pi_{\alpha_0}(\ii)-\pi_{\alpha_0}(\bj)} \\
    &=\inner{u_{\alpha_0}v}{v_{i_1}-v_{j_1}}+\inner{v}{A_{i_1}\pi_{\alpha_0}(\sigma(\ii))-A_{j_1}\pi_{\alpha_0}(\sigma(\bj))}
  \end{align*}
  and
  \begin{align*}
    0&=\partial_{\alpha=\alpha_0}\inner{v}{\pi_{\alpha}(\ii)-\pi_{\alpha}(\bj)}\\
    &=\inner{u_{\alpha_0+\pi/2}v}{v_{i_1}-v_{j_1}}+\inner{v}{A_{i_1}\pi_{\alpha_0+\pi/2}(\sigma(\ii))-A_{j_1}\pi_{\alpha_0+\pi/2}(\sigma(\bj))}.
  \end{align*}
  Hence,
  \begin{align*}
    |v_{i_1}-v_{j_1}|^2
    &=\bigl(\inner{v}{A_{i_1}\pi_{\alpha_0}(\sigma(\ii))-A_{j_1}\pi_{\alpha_0}(\sigma(\bj))}\bigr)^2+\bigl(\inner{v}{A_{i_1}\pi_{\alpha_0+\pi/2}(\sigma(\ii))-A_{j_1}\pi_{\alpha_0+\pi/2}(\sigma(\bj))}\bigr)^2\\
    &\leq
    \bigl(|A_{i_1}\pi_{\alpha_0}(\sigma(\ii))|+|A_{j_1}\pi_{\alpha_0}(\sigma(\bj))|\bigr)^2+\bigl(|A_{i_1}\pi_{\alpha_0+\pi/2}(\sigma(\ii))|+|A_{j_1}\pi_{\alpha_0+\pi/2}(\sigma(\bj))|\bigr)^2\\
    &\leq
    2\bigl(\|A_{i_1}\|+\|A_{j_1}\|\bigr)^2\biggl(\frac{\|\tv\|}{1-\|\Av\|}\biggr)^2.
  \end{align*}
  This contradicts the condition \eqref{eq:matricesplanar}.
\end{proof}

The following proposition shows that $(SO(2),\LL^1)$ satisfies the modified self-affine transversality condition for all $\Av \in \mathcal{A}_{\tv}$.

\begin{proposition} \label{lem:trans2}
  Let $\tv = (v_1,\dots, v_N) \in (\R^2)^N$ be such that $v_i\neq v_j$ for $i\neq j$ and $\Av = (A_1,\dots,A_N) \in \mathcal{A}_{\tv}$. Then there is a constant $C'>0$ such that
  $$
    \mathcal{L}^1\left(\left\{\alpha\in[0,2\pi]: |\proj_{\theta}\pi_{\alpha}(\ii)-\proj_{\theta}\pi_{\alpha}(\jj) |<t\right\}\right) \leq C'\min\biggl\{1,\frac{t}{\alpha_1(\proj_{\theta}A_{\ii\wedge\jj})}\biggr\}.
  $$
  for all $\ii,\bj\in\Sigma$ with $\iii\neq \jjj$, $t>0$, and $\theta\in G(2,1)$.
\end{proposition}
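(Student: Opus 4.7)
The plan is to reduce the target measure estimate to a one-dimensional transversality statement for a single scalar function $g(\alpha)$, and then to exploit the very special trigonometric structure that arises because $u_\alpha$ is a planar rotation.

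\emph{Reduction.} Set $\bk=\bi\wedge\bj$ and $\bi'=\sigma^{|\bk|}\bi$, $\bj'=\sigma^{|\bk|}\bj$, so $\bi'|_1\neq\bj'|_1$; telescoping the series in \eqref{eq:natproj} (applied to $u_\alpha(\tv)$) gives the factorization
\[
\pi_\alpha(\bi)-\pi_\alpha(\bj) = A_\bk\bigl(\pi_\alpha(\bi')-\pi_\alpha(\bj')\bigr).
\]
The composition $\proj_\theta A_\bk$ has rank at most one since $\dim\theta=1$; if the rank is zero then $\proj_\theta\pi_\alpha(\bi)=\proj_\theta\pi_\alpha(\bj)$ identically in $\alpha$ and there is nothing to prove. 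Otherwise, write $\proj_\theta A_\bk x = \alpha_1(\proj_\theta A_\bk)\,\langle w,x\rangle\,e_\theta$ for a unit vector $e_\theta\in\theta$ and a unit right singular vector $w\in\R^2$. Defining $g(\alpha):=\langle w,\pi_\alpha(\bi')-\pi_\alpha(\bj')\rangle$, one obtains $|\proj_\theta\pi_\alpha(\bi)-\proj_\theta\pi_\alpha(\bj)| = \alpha_1(\proj_\theta A_\bk)\,|g(\alpha)|$. It therefore suffices to bound $\mathcal{L}^1(\{\alpha\in[0,2\pi]:|g(\alpha)|<s\})$ by $C''\min(1,s)$ uniformly in $w,\bi',\bj'$, with $s=t/\alpha_1(\proj_\theta A_\bk)$.

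\emph{Trigonometric structure and conclusion.} For any $z,v\in\R^2$ a direct computation gives $\langle z,u_\alpha v\rangle = \langle z,v\rangle\cos\alpha + (z_2v_1-z_1v_2)\sin\alpha$. Substituting this term-wise into the absolutely convergent series defining $\pi_\alpha(\bi')-\pi_\alpha(\bj')$ shows
\[
g(\alpha) = A\cos\alpha+B\sin\alpha = R\cos(\alpha-\phi)
\]
for some $A,B\in\R$, with $R=\sqrt{A^2+B^2}\geq 0$. Differentiating yields the identity $g(\alpha)^2+g'(\alpha)^2\equiv R^2$. Apply Lemma~\ref{lem:trans} with the unit vector $w$ and the pair $(\bi',\bj')$ (valid because $\bi'|_1\neq\bj'|_1$): there is a $\delta>0$, independent of $\bi,\bj,\theta,\alpha_0$, such that at every $\alpha_0\in[0,2\pi]$ either $|g(\alpha_0)|\geq\delta$ or $|g'(\alpha_0)|\geq\delta$. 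Combined with the identity above this immediately gives $R\geq\delta$. Consequently $\{\alpha:|g(\alpha)|<s\}=\{\alpha:|\cos(\alpha-\phi)|<s/R\}$ has Lebesgue measure at most $\min(2\pi,4\arcsin(s/R))\leq \min(2\pi,2\pi s/\delta)$, and substituting back $s=t/\alpha_1(\proj_\theta A_\bk)$ yields the claim with $C'=2\pi/\delta$.

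\emph{Main obstacle.} The real work is the observation that $g$ is a pure first harmonic in $\alpha$, so the pointwise dichotomy of Lemma~\ref{lem:trans} upgrades globally into $R\geq\delta$ via the identity $g^2+(g')^2\equiv R^2$. Without this planar-rotation structure, one would be forced to control the number of connected components of $\{|g|<s\}$ using analyticity of $g$ together with a quantitative second-derivative bound; here the rotation parameter makes this unnecessary. The only other point requiring a moment's care is the degenerate case $\alpha_1(\proj_\theta A_\bk)=0$, which is handled trivially.
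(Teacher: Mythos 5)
Your proof is correct, and the reduction to the scalar transversality estimate is essentially the paper's: both factor $\pi_\alpha(\bi)-\pi_\alpha(\bj)=A_{\bk}(\pi_\alpha(\bi')-\pi_\alpha(\bj'))$, identify $\alpha_1(\proj_\theta A_\bk)=\|A_\bk^T|\theta\|$, and reduce to bounding $\mathcal L^1(\{\alpha:|\langle w,\pi_\alpha(\bi')-\pi_\alpha(\bj')\rangle|<s\})$ with $\bi'|_1\ne\bj'|_1$. Where you genuinely depart from the paper is the final measure estimate. The paper invokes Lemma~\ref{lem:trans} to split $[0,2\pi]$ into the region $I$ where $|g|\ge\delta$ and its complement (contained in $\{|g'|\ge\delta\}$), decomposes the complement into finitely many monotone intervals, bounds the number of intervals using a uniform derivative bound, and applies the mean value theorem on each. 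You instead observe that, because $u_\alpha$ is a rotation, $\langle z,u_\alpha v\rangle=\langle z,v\rangle\cos\alpha+(z_2v_1-z_1v_2)\sin\alpha$ term-by-term, so the absolutely convergent series for $g$ is a pure first harmonic $g(\alpha)=R\cos(\alpha-\phi)$; then $g^2+(g')^2\equiv R^2$ upgrades the pointwise dichotomy of Lemma~\ref{lem:trans} into the global bound $R\ge\delta$, after which the sublevel set is computed explicitly. Your argument is shorter and more transparent in this planar case, and it avoids counting connected components entirely; the paper's softer argument has the advantage that it does not rely on the exact trigonometric structure and is in principle portable to more general one-parameter deformations. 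One tiny point of hygiene: in the degenerate case $\alpha_1(\proj_\theta A_\bk)=0$ the conclusion is not ``nothing to prove'' but rather that the right-hand side is $C'\cdot 1$, which trivially dominates the left-hand side; in fact this case never occurs here because $A_\bk$ is invertible so $\proj_\theta A_\bk$ has rank exactly one. Also, one should normalize $\delta\le1$ so that $C'=2\pi/\delta$ dominates both branches of the $\min$; this is harmless.
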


\begin{proof}
  Fix $\iii,\jjj \in \Sigma$ with $\iii \ne \jjj$. Observe that for every $\theta\in G(2,1)$ and $w\in\R^2$ we have
  $$
    |\proj_{\theta}w |=|\inner{v}{w}|,
  $$
  where $v\in\theta$ is so that $|v|=1$. Thus, writing $n=|\ii\wedge\jj|$ we have
  \begin{align*}
  |\proj_{\theta}\pi_{\alpha}(\ii)-\proj_{\theta}\pi_{\alpha}(\jj)| & =  |\inner{v}{\pi_{\alpha}(\ii)-\pi_{\alpha}(\jj)}|
  =|\inner{v}{A_{\ii\wedge\jj}\pi_{\alpha}(\sigma^n\ii)-A_{\ii\wedge\jj}\pi_{\alpha}(\sigma^n\jj)}|\\
  &=|A^T_{\ii\wedge\jj}v|\biggl|\biggl\la\frac{A_{\ii\wedge\jj}^Tv}{|A^T_{\ii\wedge\jj}v|},\pi_{\alpha}(\sigma^n\ii)-\pi_{\alpha}(\sigma^n\jj)\biggl\ra\biggr|.
  \end{align*}
  On the other hand,
  $$
  \|\proj_{\theta}A_{\ii\wedge\jj}\|=\|A_{\ii\wedge\jj}^T\proj_{\theta}^T\|=\sup_{w\in\R^2}\frac{|A_{\ii\wedge\jj}^T\proj_{\theta}^Tw|}{|w|}=\sup_{w\in\R^2}\frac{|A_{\ii\wedge\jj}^T\proj_{\theta}w|}{|\proj_{\theta}w|}\frac{|\proj_{\theta}w|}{|w|}=\|A^T_{\ii\wedge\jj}|\theta\|.
  $$
  Hence, it suffices to prove that for any $v\in\R^2$ with $|v|=1$
  \begin{equation}\label{eq:enough}
    \mathcal{L}^1\left(\left\{\alpha\in[0,2\pi]:|\inner{v}{\pi_{\alpha}(\ii)-\pi_{\alpha}(\jj)}|<t\right\}\right)\leq C't
  \end{equation}
  for all $\iii,\jjj \in \Sigma$ with $\iii|_1 \ne \jjj|_1$.

  Fix $v \in \R^2$ with $|v|=1$ and $\iii,\jjj \in \Sigma$ with $\iii|_1 \ne \jjj|_1$. Write $$H_t = \left\{\alpha\in[0,2\pi]:|\inner{v}{\pi_{\alpha}(\ii)-\pi_{\alpha}(\jj)}|<t\right\}$$ and let $\delta > 0$ be as in Lemma \ref{lem:trans}. Define
  \begin{align*}
    I &= \{ \alpha_0 \in [0,2\pi] : |\inner{v}{\pi_{\alpha_0}(\iii) - \pi_{\alpha_0}(\jjj)}| \ge \delta \}, \\
    J &= \{ \alpha_0 \in [0,2\pi] : |\partial_{\alpha=\alpha_0}\inner{v}{\pi_{\alpha}(\iii) - \pi_{\alpha}(\jjj)}| \ge \delta \}.
  \end{align*}
  Lemma \ref{lem:trans} guarantees that $I \cup J = [0,2\pi]$.
  %and hence $[0,2\pi] \setminus I_1 \subset I_2$.
  %It suffices to estimate the integral in the sets $I_1$ and $I_2$ separately.
  We trivially have
  \begin{equation*}
    \mathcal{L}^1(H_t\cap I) \leq
    \begin{cases}
      2\pi\leq 2\pi\delta^{-1}t, &\text{if }t\geq\delta, \\
      0, &\text{if }t<\delta.
    \end{cases}
%     2\pi\leq\frac{2\pi}{\delta}t \text{ if }t\geq\delta\text{ and }
%     \mathcal{L}^1(H_t\cap I)=0\leq \frac{2\pi}{\delta}t \text{ if }t<\delta.
  \end{equation*}
  It remains to estimate the integral in the complement of $I$. Since $[0,2\pi] \setminus I \subset J$ is open, it can be written in a unique way as a union of disjoint open intervals $I_j$ in which the function $\alpha \mapsto \inner{v}{\pi_{\alpha}(\iii) - \pi_{\alpha}(\jjj)}$ is strictly monotone. If $\beta_1$ and $\beta_2$ are the end points of $I_j$, then $\inner{v}{\pi_{\beta_1}(\iii)-\pi_{\beta_1}(\jjj)} = \delta$ and $\inner{v}{\pi_{\beta_2}(\iii)-\pi_{\beta_2}(\jjj)} = -\delta$ or vice versa. Since
  \begin{align*}
    \partial_{\alpha=\alpha_0} \inner{v}{\pi_{\alpha}(\iii)-\pi_{\alpha}(\jjj)} &= \inner{v}{\pi_{\alpha_0+\pi/2}(\iii)-\pi_{\alpha_0+\pi/2}(\jjj)} \\
    &\le |\pi_{\alpha_0+\pi/2}(\iii) - \pi_{\alpha_0+\pi/2}(\jjj)| \le \frac{2\|\tv\|}{1-\|\Av\|}
  \end{align*}
  for all $\alpha_0 \in [0,2\pi]$, we have
  \begin{align*}
    2\delta &= |\inner{v}{\pi_{\beta_2}(\iii)-\pi_{\beta_2}(\jjj)} - \inner{v}{\pi_{\beta_1}(\iii)-\pi_{\beta_1}(\jjj)}| \\
    &= \Bigl| \int_{\beta_1}^{\beta_2} \partial_{\alpha=\alpha_0} \inner{v}{\pi_{\alpha}(\iii)-\pi_{\alpha}(\jjj)} \,\mathrm{d}\LL^1(\alpha_0) \Bigr| \le \frac{2\|\tv\|}{1-\|\Av\|} \LL^1(I_j).
  \end{align*}
  Thus there are only finitely many intervals $I_j$. For each $j$, by the monotonicity and the mean value theorem, there exists $\lambda_j \in I_j$ such that
  \begin{equation*}
    |\inner{v}{\pi_{\alpha}(\iii) - \pi_{\alpha}(\jjj)}| = |\inner{v}{\pi_{\alpha}(\iii) - \pi_{\alpha}(\jjj)} - \inner{v}{\pi_{\lambda_j}(\iii) - \pi_{\lambda_j}(\jjj)}| \ge \delta|\alpha-\lambda_j|
  \end{equation*}
  for all $\alpha \in I_j$. By this estimate, we have
  \begin{align*}
   \mathcal{L}^1(H_t\cap I_j)\leq\mathcal{L}^1(B(\lambda_j,\delta^{-1}t))\leq 2\pi\delta^{-1}t.
  \end{align*}
  Since
  $$
    \mathcal{L}^1(H_t)\leq \mathcal{L}^1(H_t\cap I)+\sum_j\mathcal{L}^1(H_t\cap I_j)\leq C' t,
  $$
  we have shown \eqref{eq:enough} and therefore, finished the proof.
\end{proof}

\begin{proof}[Proof of Theorem \ref{thm:mainplanar}]
  Let $\mathcal{N}$ be the product of Haar measures on $GL_2(\R)^N$. This means that
  $$
    \mathcal{N}(\mathcal B) = \int_{\mathcal B} \prod_{i=1}^N \biggl( \frac{1}{\det A_i} \biggr)^2 \dd\LL^{4N}(\Av).
  $$
  for all measurable $\mathcal B \subset GL_d(\R)^N$.
  Since $\mathcal{L}^{4N}\ll\mathcal{N}$ it suffices to check that the assumptions of Theorems~\ref{thm:aeformeasure} and \ref{thm:almosteverymatrix} hold for the measure $\mathcal{N}$. By the Haar property, for any measurable $L^1(\mathcal{N})$-function $f\colon GL_2(\R)^N\to\R$ and for every $\alpha\in [0,2\pi]$, we have
  \begin{align*}
    \int f(\Av)\,\mathrm{d}\mathcal{N}(\Av) & = \idotsint f(A_1,\dots,A_N)\prod_{i=1}^N\biggl(\frac{1}{\det A_i}\biggr)^2\,\mathrm{d}\mathcal{L}^{4}(A_1)\cdots\mathrm{d}\mathcal{L}^{4}(A_N) \\
    %&= \idotsint f(G^* A_1G,\dots,G^*A_NG)\prod_{i=1}^N\biggl(\frac{1}{(\det A_i)}\biggr)^d\,\mathrm{d}\mathcal{L}^{d^2}(A_1)\cdots\mathrm{d}\mathcal{L}^{d^2}(A_N) \\
    &= \idotsint f(u_{\alpha}^T A_1u_{\alpha},\dots,u_{\alpha}^TA_Nu_{\alpha})\prod_{i=1}^N\biggl(\frac{1}{\det A_i}\biggr)^2\,\mathrm{d}\mathcal{L}^{4}(A_1)\cdots\mathrm{d}\mathcal{L}^{4}(A_N).
  \end{align*}
  Thus, $\mathcal{N}=\int H_{\Av}\mathcal{L}^1\,\mathrm{d}\mathcal{N}(\Av)$, where $H_{\Av}(\alpha)=(u_{\alpha}^T A_1u_{\alpha},\ldots,u_{\alpha}^TA_Nu_{\alpha})$. Therefore, by Proposition~\ref{lem:trans2} and Lemma~\ref{lem:safreg}, the statement for Bernoulli measures in Theorem~\ref{thm:mainplanar} follows from Theorem~\ref{thm:aeformeasure}.

  If $\chi_1(\Av,\mu)=\chi_2(\Av,\mu)$, where $\mu$ is an ergodic equilibrium state of $\Av$, then by Lemma~\ref{lem:ssc-d} and \cite[Theorem~2.13 and Corollary~4.16]{FengHu09} we have $\dimh E_{\Av,\tv}=\dim_{\mathrm{aff}}\Av$. Thus, without loss of generality, we may restrict $\mathcal{N}$ to the set $\mathbb{A}=\left\{\Av:\chi_1(\Av,\mu)>\chi_2(\Av,\mu)\right\}$. Hence, the statement for self-affine sets in Theorem~\ref{thm:mainplanar} follows from Lemma~\ref{lem:ssc-d}, Proposition~\ref{lem:trans2}, and Theorem~\ref{thm:almosteverymatrix} with the measure $\mathcal{M}=\left.\mathcal{N}\right|_{\mathbb{A}}$.
\end{proof}

\section{The higher dimensional case}\label{sec:higherdimcase}

In this section, we prove Theorem~\ref{thm:main} as a consequence of Theorem~\ref{thm:aeformeasure}. At first, we show that the modified self-affine transversality condition holds for some pair $(\mathcal U, m)$ for all $\Av\in \mathcal A_\tv'$. Secondly, we show that Lebesgue typical matrix tuples have simple Lyapunov spectra for all Bernoulli measures, and observe that $\mathcal{D}\subset\mathbb{A}$, where $\mathcal{D}$ is defined in \eqref{eq:d} and $\mathbb{A}$ is as in \S \ref{sec:sub-systems}.

Recall that if $\tv=(v_1,\dots,v_N)\in(\R^d)^N$ is such that $v_i\neq v_j$ for every $i\neq j$, then
\begin{equation*}
  \mathcal{A}_{\tv}'=\biggl\{\Av\in GL_d(\R)^N : 0<\max_{i\neq j}\frac{\|A_i\|+\|A_j\|}{|v_i-v_j|}\cdot\frac{\|\tv\|}{1-\|\Av\|}<\frac{2}{\sqrt{3}}-1\biggr\}.
\end{equation*}
Recall also that
\begin{align*}
  O(d) &= \{ G \in GL_d(\R) : G^TG = GG^T = I \}, \\
  SO(d) &= \{ G \in O(d) : \det(G) = 1 \}
\end{align*}
are the \emph{orthogonal group} and the \emph{special orthogonal group}, respectively. We define $\mathcal U$ to be $O(d)$ and choose $m$ to be the Haar measure $\Theta$ on $O(d)$. The following proposition shows that $(O(d),\Theta)$ satisfies the modified self-affine transversality condition for all $\Av \in \mathcal{A}_{\tv}'$.

\begin{proposition} \label{thm:higher-dim-trans}
  Let $d\in \N$ be such that $d \ge 3$, $\tv = (v_1,\dots, v_N) \in (\R^2)^N$ be such that $v_i\neq v_j$ for $i\neq j$, and $\Av = (A_1,\dots,A_N) \in \mathcal{A}_{\tv}'$. Then there is a constant $C>0$ such that
  $$
    \Theta(\{ G \in O(d): |\proj_V(\pi_{\Av,G(\tv)}(\iii)-\pi_{\Av,G(\tv)}(\jjj))| < t \}) \le C\prod_{i=1}^{\dim V} \min\biggl\{ 1,\frac{t}{\alpha_i(\proj_VA_{\iii\wedge\jjj})} \biggr\}.
  $$
  for all $\ii,\bj\in\Sigma$ with $\iii \neq \jjj$, $t>0$, and proper subspaces $V$ of $\R^d$.
\end{proposition}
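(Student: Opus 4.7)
The plan is to parallel Proposition~\ref{lem:trans2}, replacing the single rotation parameter $\alpha\in[0,2\pi]$ by the full Haar measure $\Theta$ on $O(d)$. With $n=|\iii\wedge\jjj|$, $w_0 := v_{i_{n+1}}-v_{j_{n+1}}\neq 0$, and
$$
r(G) := A_{i_{n+1}}\pi_{\Av,G(\tv)}(\sigma^{n+1}\iii) - A_{j_{n+1}}\pi_{\Av,G(\tv)}(\sigma^{n+1}\jjj),
$$
the fundamental decomposition
$$
\pi_{\Av,G(\tv)}(\iii) - \pi_{\Av,G(\tv)}(\jjj) = A_{\iii|_n}\,w(G),\qquad w(G) := Gw_0 + r(G),
$$
holds. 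Since $|Gv_k|=|v_k|$, the hypothesis $\Av\in\mathcal{A}'_\tv$ yields the uniform bound
$$
|r(G)| \le (\|A_{i_{n+1}}\|+\|A_{j_{n+1}}\|)\frac{\|\tv\|}{1-\|\Av\|} < \eta|w_0|,\qquad \eta := \tfrac{2}{\sqrt{3}}-1,
$$
for every $G\in O(d)$. In particular $|w(G)|\le M := (1+\eta)|w_0|$ and $|w(G)|\ge (1-\eta)|w_0|>0$ uniformly in $G$.

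Next, I would reduce the measure estimate to a density bound via the singular value decomposition. Setting $B := \proj_V A_{\iii|_n}$ and $k := \dim V$, let $e_1,\dots,e_k\in\R^d$ be the orthonormal right singular vectors of $B$ with singular values $\alpha_1\ge\cdots\ge\alpha_k>0$. Since $|Bw|^2=\sum_{i=1}^k\alpha_i^2\,\inner{e_i}{w}^2$, the event $\{|Bw(G)|<t\}$ is contained in the box $\bigcap_{i=1}^k\{|\inner{e_i}{w(G)}|<t/\alpha_i\}$. The target estimate then follows from the key intermediate claim: for every $k\le d-1$ and every orthonormal system $e_1,\dots,e_k\in\R^d$, the joint law of $(\inner{e_i}{w(G)})_{i=1}^k$ under $\Theta$ has density bounded by a constant $C_0$ depending only on $d,\Av,\tv$. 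Indeed, for each index $i$ with $t/\alpha_i\ge M$ the coordinate constraint is vacuous and contributes $\min\{1,t/\alpha_i\}=1$; for the remaining indices the density bound applied to the box of volume $\prod 2t/\alpha_i$, combined with the trivial estimate $\Theta(\cdot)\le 1$, yields the required $C\prod_i\min\{1,t/\alpha_i\}$.

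The density claim rests on the Haar structure of $O(d)$. The unperturbed map $G\mapsto Gw_0$ pushes $\Theta$ forward to the uniform probability on $S^{d-1}_{|w_0|}$, whose projection along $k\le d-1$ orthonormal directions has density proportional to $(|w_0|^2-|y|^2)^{(d-k-2)/2}$ on the ball of radius $|w_0|$; this is bounded for $k\le d-2$ and only integrably singular at the boundary for $k=d-1$ (where the singularity is absorbed by the trivial bound $\Theta(\cdot)\le 1$). The perturbation factors as $r(G) = \sum_m M_m^\iii Gv_{i_{n+m}} - \sum_m M_m^\jjj Gv_{j_{n+m}}$ for fixed matrices $M_m^\cdot$ built from the $A_i$'s, so $r$ is affine in $G\in\R^{d\times d}$ with Lipschitz norm at most $\eta|w_0|$. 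Disintegrating $\Theta$ along the fibration $G\mapsto Gw_0/|w_0|\in S^{d-1}$, whose fibers are cosets of the stabilizer $O(d-1)$ of $w_0$, the fiberwise oscillation of $r$ is bounded by $\eta|w_0|$; consequently the Jacobian of $G\mapsto w(G)$ remains comparable to that of $G\mapsto Gw_0$ up to a constant depending only on $\eta$, and the density bound transfers.

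The main obstacle is this final Jacobian comparison. In the planar case the monotonicity of a single-angle function produced a clean interval-based estimate, but in $d\ge 3$ the group $O(d)$ has dimension $d(d-1)/2$ and $w$ is not globally a diffeomorphism onto any natural target. The precise constant $2/\sqrt{3}-1<\sqrt{2}/2$ is what guarantees that the perturbation $r$ is tight enough to prevent fiberwise degeneration and density concentrations; heuristically, the cones in Figure~\ref{fig:illustration} must have a smaller opening angle in $d\ge 3$ than in the plane, which forces the stricter bound in the definition of $\mathcal{A}'_\tv$.
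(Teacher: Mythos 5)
The decomposition you set up is essentially the same as the paper's (your $w(G)$ is the paper's $GE_{\iii,\jjj}(G)$ after factoring out $A_{\iii\wedge\jjj}$), and your reduction via the singular value decomposition of $\proj_V A_{\iii\wedge\jjj}$ to a coordinate-box event matches the target of the paper's final sphere computation. The problem is the step you yourself flag as ``the main obstacle'': the assertion that the pushforward of $\Theta$ under $G\mapsto w(G)=Gw_0+r(G)$ has a bounded density (or a controlled singularity) on any $k\le d-1$ orthonormal coordinates. You deduce this for the unperturbed map $G\mapsto Gw_0$ from the classical projection of uniform measure on $S^{d-1}$, but then need to argue the perturbation $r$, which is only Lipschitz-close to $0$, does not destroy the density bound. ``Fiberwise oscillation $\le \eta|w_0|$'' and ``the Jacobian stays comparable'' is not a proof: $O(d)$ has dimension $d(d-1)/2 \gg d$, the fibers of $G\mapsto Gw_0/|w_0|$ have dimension $(d-1)(d-2)/2$, and a Lipschitz-small perturbation of a fibration can, in principle, concentrate the pushforward measure. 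No argument is given for why this does not happen, and the heuristic in the last paragraph about the constant $2/\sqrt{3}-1$ and cone opening angles does not substitute for one.

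The paper sidesteps exactly this issue with a covering (rather than a density/Jacobian) argument. It takes a maximal packing of $O(d)$ by balls of radius comparable to $t^2$, on each ball $B_k$ freezes the $G$-dependence of $E_{\iii,\jjj}(G)$ at the fixed unit vector $w_k=w_{\iii,\jjj}(G_k)$, and rotates by a compensating $H_k\in SO(d)$ so that the event becomes an honest sphere event $\{G\in B_kH_k : |\langle Ge, D_{\iii\wedge\jjj}Ge\rangle|<2t^2/r^2\}$ with the \emph{same} fixed vector $e$ for all $k$. The crucial input that lets the ball counts add up is Claim~3: the map $G_k\mapsto H_k$ is a contraction with constant $\roo<1$, so the shifted family $\{B(G_kH_k,\cdot)\}$ still has bounded overlap. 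That contraction is precisely where the constant $2/\sqrt{3}-1$ and the assumption $d\ge 3$ (needed for the ambient triangle geometry on the sphere, Figures (a) and (b)) enter. This is the concrete mechanism your proposal is missing: some quantitative statement controlling the overlap of the level sets of $w(G)$, in place of the density bound you posit but do not establish. Without Claim~3 or an analogue of it, your argument does not close.
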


\begin{proof}
  Fix $\Av\in \mathcal A_\tv'$ and $t>0$, and let $V\subset \R^d$ be a proper subspace. Fix $n \in \N$ and let $\iii=(i_1,i_2,\ldots),\jjj=(j_1,j_2,\ldots)\in \Sigma$ be such that $\iii \ne \jjj$ and $\bi\wedge \bj\in \Sigma_{n-1}$. Define
  \begin{equation*}
    E_{\iii,\jjj}(G) = v_{i_n} - v_{j_n} + \sum_{k=n+1}^\infty G^{T}A_{\sigma^n(\iii|_k)}G v_{i_{k+1}} - \sum_{k=n+1}^\infty G^{T}A_{\sigma^n(\jjj|_k)}G v_{j_{k+1}}
  \end{equation*}
  and write $w_{\iii,\jjj}(G) = E_{\iii,\jjj}(G)/|E_{\iii,\jjj}(G)|$ for all $G\in O(d)$. Let $D_{\iii \wedge \jjj} = A_{\iii \wedge \jjj}^T \proj_V^T \proj_V A_{\iii \wedge \jjj}$. The claim is equivalent to
  \begin{equation}\label{eq:equivalent_statement}
    \Theta(\{ G \in O(d) : |\la GE_{\iii,\jjj}(G), D_{\iii\wedge\jjj}GE_{\iii,\jjj}(G) \ra| < t^{2} \}) \le C\prod_{i=1}^{\dim V} \min\biggl\{ 1,\frac{t}{\alpha_i(\proj_VA_{\iii\wedge\jjj})} \biggr\}
  \end{equation}
  and therefore, our task is to show that \eqref{eq:equivalent_statement} holds.

  Defining
  \begin{equation*}
    R = \frac{\|\tv\|}{1-\|\Av\|} \quad \text{and} \quad r = |v_{i_n}-v_{j_n}|-(\|A_{i_n}\|+\|A_{j_n}\|)R
  \end{equation*}
  we have
  \begin{equation} \label{eq:E-estimate}
    0<r\le |E_{\bi, \bj}(G)| \le |v_{i_n}-v_{j_n}| + (\|A_{i_n}\|+\|A_{j_n}\|)R \le 2R.
  \end{equation}
  Observe that we also have
  \begin{equation} \label{claim2}
  \begin{split}
    |w_{\iii,\jjj}(G)-w_{\iii,\jjj}(H)| &\le \biggl|E_{\iii,\jjj}(G)\biggl(\frac{1}{|E_{\iii,\jjj}(G)|}-\frac{1}{|E_{\iii,\jjj}(H)|}\biggr) + \frac{E_{\iii,\jjj}(G)-E_{\iii,\jjj}(H)}{|E_{\iii,\jjj}(H)|} \biggr| \\
        &\le r^{-1}2|E_{\iii,\jjj}(G)-E_{\iii,\jjj}(H)| \\
    &\le 4r^{-1}R(\|A_{i_n}\|+\|A_{j_n}\|)\|G-H\|
  \end{split}
  \end{equation}
  for all $G,H \in O(d)$. Furthermore, write
  \begin{equation*}
    c_1 = 4r^{-1}R(\|A_{i_n}\|+\|A_{j_n}\|)
  \end{equation*}
  and note that, by the definition of $\mathcal{A}_{\tv}'$, we have $c_1<1$. Recalling that $O(d)$ is compact, let $\{ B(G_k,2^{-1}t^{2}r^{-2}c_1^{-1}) \}_{k=1}^K$ be a finite maximal packing of $O(d)$ and let $B_k = B(G_k,t^{2}r^{-2}c_1^{-1})$. Notice that $\{B_k\}_{k=1}^K$ is a cover of $O(d)$. Fix $e \in S^{d-1}$ such that $\la e,v_{i_n}-v_{j_n} \ra = 0$ and write $w_k = w_{\iii,\jjj}(G_k)$. Let $H_k \in SO(d)$ be such that it is a rotation in the plane spanned by $w_k$ and $e$ and satisfies
  \begin{equation*}
    w_k = H_ke.
  \end{equation*}

  \begin{claim}\label{claim1}
  It holds that
  \begin{align*}
    \Theta(\{G\in O(d) : \;&|\la GE_{\bi, \bj}(G), D_{\bi\wedge\bj}GE_{\bi, \bj}(G)\ra|<t^2\}) \\
    &\le \sum_{k=1}^K \Theta (\{G\in B_kH_k : |\la Ge, D_{\bi\wedge \bj}Ge\ra| < 2t^2/r^2 \}).
  \end{align*}
  \end{claim}

  \begin{proof}
  Note that without loss of generality, we may assume that $\|D_{\iii\wedge\jjj}\| \le 1$. Therefore,
  \begin{equation}\label{eq:estimate1}
    |\la Gx,D_{\iii\wedge\jjj}Gx \ra - \la Gy,D_{\iii\wedge\jjj}Gy \ra| \le 2|x-y|.
  \end{equation}
  for every $x,y \in S^{d-1}$. Observe that if $G \in B_k$, then \eqref{claim2} implies $|w_{\iii,\jjj}(G)-w_k| < t^{2}/r^2$. Thus, by \eqref{eq:estimate1} and the invariance of $\Theta$ under the action of $SO(d)$, we obtain
  \begin{equation} \label{eq:estimate3}
  \begin{split}
    \Theta(\{ G \in O(d) : \;&|\la GE_{\iii,\jjj}(G), D_{\iii\wedge\jjj}GE_{\iii,\jjj}(G) \ra| < t^{2} \}) \\
    &\le \Theta(\{ G \in O(d) : |\la Gw_{\iii,\jjj}(G), D_{\iii\wedge\jjj}Gw_{\iii,\jjj}(G) \ra| < t^{2}/r^2 \}) \\
    &\le \sum_{k=1}^K \Theta(\{ G \in B_k : |\la Gw_{\iii,\jjj}(G), D_{\iii\wedge\jjj}Gw_{\iii,\jjj}(G) \ra| < t^{2}/r^2 \}) \\
    &\le \sum_{k=1}^K \Theta(\{ G \in B_k : |\la Gw_k, D_{\iii\wedge\jjj}Gw_k \ra| < 2t^{2}/r^2 \})\\
    &\le \sum_{k=1}^K \Theta(\{ G \in B_k : |\la GH_ke, D_{\iii\wedge\jjj}GH_ke \ra| < 2t^{2}/r^2 \}) \\
    &= \sum_{k=1}^K \Theta(\{ G \in B_kH_k : |\la Ge, D_{\iii\wedge\jjj}Ge \ra| < 2t^{2}/r^2 \})
  \end{split}
  \end{equation}
  which is what we wanted.
  \end{proof}

  \begin{claim}\label{claim3}
    There exists a constant $0<\roo<1$ such that
    \begin{equation*}
      \|H_k-H_h\| \le \roo\|G_k-G_h\|.
    \end{equation*}
  \end{claim}

  \begin{proof}
  If $w_k=w_h$, then also $H_h=H_k$ and there is nothing to prove. Furthermore, if $w_k=e$, then
  \begin{equation*}
    \|H_k - H_h\|=\|\Id - H_h\|=2|\sin(\beta/2)|,
  \end{equation*}
  where $\beta$ is the angle by which $H_h$ rotates. Since $H_h(e)=w_h$ and $2\sin(\beta/2)=|w_k - w_h|$, the claim follows from \eqref{claim2}.
  Since the case $w_h=e$ is similar, we may assume that $w_k, w_h$, and $e$ are all distinct. Define
  \begin{equation*}
    W = \linspan\{ w_k,w_h,e \}.
  \end{equation*}
  We have $H_ku = H_hu = u$ for all $u \in W^\perp$ since $H_k$ and $H_h$ are rotations on $W$. Observe that
  \begin{equation} \label{eq:H-estimate1}
  \begin{split}
    \|H_k-H_h\| &= \|H_k^T-H_h^T\| = \sup\{ |(H_k^T-H_h^T)w| : w \in W \text{ such that } |w|=1 \} \\
    &= \sup\{ |(2-2\la H_k^Tw,H_h^Tw \ra)^{1/2}| : w \in W \text{ such that } |w|=1 \} \\
    &= (|2-2\inf\{ \la w,H_kH_h^Tw \ra| : w \in W \text{ such that } |w|=1 \})^{1/2} \\
    &= (2-2\cos\beta)^{1/2} = 2 |\sin(\beta/2)|,
  \end{split}
  \end{equation}
  where $\beta$ is the angle by which $H_kH_h^T$ rotates. Observe that, by \eqref{eq:E-estimate},
  \begin{equation*}
    \left |\biggl\la \frac{v_{i_n}-v_{j_n}}{|v_{i_n}-v_{j_n}|},w_{\iii,\jjj}(G) \biggr\ra \right| \ge \frac{|v_{i_n}-v_{j_n}|-(\|A_{i_n}\|+\|A_{j_n}\|)R}{|E_{\bi, \bj}(G)|} \ge \frac{r}{|v_{i_n}-v_{j_n}|+(\|A_{i_n}\|+\|A_{j_n}\|)R}.
  \end{equation*}
  Therefore, for every $u \in S^{d-1}$ with $u \perp v_{i_n}-v_{j_n}$ we have
  \begin{equation} \label{eq:H-estimate2}
    |u-w_{\iii,\jjj}(G)| \ge \frac{r}{|v_{i_n}-v_{j_n}|+(\|A_{i_n}\|+\|A_{j_n}\|)R}.
  \end{equation}
%   Furthermore, by \eqref{claim2}, we have
%   \begin{equation} \label{eq:H-estimate3}
%     |w_k-w_h| \le 4r^{-1}R(\|A_{i_n}\|+\|A_{j_n}\|)\|G_k-G_h\|.
%   \end{equation}
 Notice that
  \begin{equation*}
    H_k^Tw_k = H_h^Tw_h = e, \quad
    H_kH_h^Tw_h = w_k,
  \end{equation*}
and further,
\begin{equation*}
H_k^Te=2\la e, w_k\ra - w_k,
 \text{ and }
H_h^Te=2\la e, w_h\ra - w_h,
\text{ so that }
    \la e,H_kH_h^Te \ra = \la w_k,w_h \ra.
\end{equation*}
  We see that, under the rotation $H_kH_h^T$, all of the vectors $e,w_k,w_h$ are rotated by the same angle. Furthermore, $w_k$ and $w_h$ are mapped to each other. This is only possible if all the points are within the same distance from the equator of the unit sphere of $\linspan\{w_k,w_h,e\}$; see Figure \ref{fig:sphere}. Therefore the plane of rotation for $H_kH_h^T$ is either $\linspan\{ w_k-e, w_h-e \}$ or $\linspan\{ w_k+e, w_h+e \}$, depending on whether $e$ is on the same hemisphere of the unit sphere with $w_k$ and $w_h$ or not. Let $f \in \{ e,-e \}$ be such that $\linspan\{ w_k-f,w_h-f \}$ is the plane of rotation.

  \begin{figure}
  \subfigure[]{\def\svgwidth{0.43\columnwidth}
        \label{fig:sphere}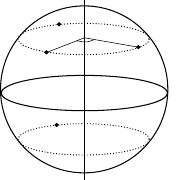}
  \subfigure[]{\def\svgwidth{0.43\columnwidth}
  \label{fig:triangle}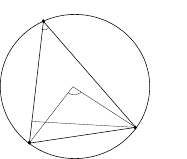}
  \caption{Figure (a) displays the possible relative positions of the points $w_h, w_k$ and $e$. The number $\beta$ is the angle by which $H_kH_h^T$ rotates. Figure (b) represents the triangle with vertices $w_h, w_k$ and $f$ in a plane invariant under $H_kH_h^T$.}
  \end{figure}

  In the plane of rotation, consider the triangle having the points $w_k, w_h$, and $f$ as its vertices, and is of height $m$ (from the vertex $w_k$); see Figure \ref{fig:triangle}. The area $\mathfrak{A}$ of this triangle satisfies
  \begin{equation*}
    \frac{\sin(\beta/2)|w_k-f||w_h-f|}{2} = \mathfrak{A} = \frac{m|w_h-f|}{2} \le \frac{|w_k-w_h||w_k-f|}{2}.
  \end{equation*}
  Therefore, we have
  \begin{equation} \label{eq:H-estimate4}
    \sin(\beta/2) \le \frac{|w_k-w_h|}{|w_k-f|}.
  \end{equation}
  Thus, by \eqref{eq:H-estimate1}, \eqref{eq:H-estimate2}, \eqref{claim2}, and \eqref{eq:H-estimate4}, we get
  \begin{equation*}
    \|H_k-H_h\| \le \frac{4R(\|A_{i_n}\|+\|A_{j_n}\|)(|v_{i_n}-v_{j_n}|+(\|A_{i_n}\|+\|A_{j_n}\|)R)}{r^2}\|G_k-G_h\|.
  \end{equation*}
  Since, by the definition of $\mathcal A_\tv'$,
  $$
    \frac{4R(\|A_{i_n}\|+\|A_{j_n}\|)(|v_{i_n}-v_{j_n}|+(\|A_{i_n}\|+\|A_{j_n}\|)R)}{r^2}<1
  $$
  we have finished the proof of Claim \ref{claim3}.
  \end{proof}

  We will now finish the proof of Proposition~\ref{thm:higher-dim-trans} by verifying  \eqref{eq:equivalent_statement} as a consequence of Claims \ref{claim1} and \ref{claim3}. By Claim~\ref{claim3} and the fact that $\{ B(G_k,2^{-1}t^{2}r^{-2}c_1^{-1}) \}_{k=1}^K$ is a maximal packing of $O(d)$, we have
  \begin{equation*}
    \| G_kH_k - G_hH_h \| \ge \| G_k-G_h \| - \| H_k-H_h \| \ge (1-\roo)2^{-1}t^{2}r^{-2}c_1^{-1}
  \end{equation*}
  for $k \ne h$.
  Since the metric $d(G,H)=\|G-H\|$ is invariant with respect to the group action, the Haar measure $\Theta$ is equivalent to the $d(d-1)/2$-dimensional Hausdorff measure on $O(d)$. Thus, there exists a constant $C$ independent of $t$ such that any $H\in O(d)$ is contained in at most $C$ balls from the family $\{B(G_kH_k,t^{2}r^{-2}c_1^{-1})\}_{k=1}^K$. Hence, by Claim \ref{claim1},
  \begin{align*} %\label{eq:estimate5}
    \Theta(\{G\in O(d) : \;&|\la GE_{\bi, \bj}(G), D_{\bi\wedge\bj}E_{\bi, \bj}(G)\ra|<t^2\}) \\
    &\le \sum_{k=1}^K \Theta(\{ G \in B_kH_k : |\la Ge, D_{\iii\wedge\jjj}Ge \ra| < 2t^{2}/r^2 \})\\
    &\le C\Theta(\{ G \in O(d) : |\la Ge, D_{\iii\wedge\jjj}Ge \ra| < 2t^{2}/r^2 \}) \\
    &= C\sigma^{d-1}(\{x \in S^{d-1} : |\la x, D_{\iii\wedge\jjj}x \ra| < 2t^{2}/r^2\}),
  \end{align*}
  where $\sigma^{d-1}$ is the spherical measure on the unit sphere $S^{d-1}$. Since $\dim V\leq d-1$, we have $\alpha_d(\proj_V A_{\iii \wedge \jjj})=0$, and
  \begin{align*}
    C\sigma^{d-1}(\{x \in S^{d-1} : \;&|\la x, D_{\iii\wedge\jjj}x \ra| < 2t^{2}/r^2\})\\
    &\le C\mathcal{L}^{d-1}(\{(x_1,\dots,x_{d-1}) \in B(0,1) : \sum_{l=1}^{d-1}\alpha_l(\proj_V A_{\iii \wedge \jjj})^2x_l^2 < 2t^{2}/r^2\})\\
    &\le C\min\{1,\sqrt{2}/r\}^{d-1}\prod_{l=1}^{\dim V}\min\biggl\{1,\frac{t}{\alpha_l(\proj_V A_{\iii \wedge \jjj})}\biggr\}.
  \end{align*}
  This proves \eqref{eq:equivalent_statement} and finishes the proof.
\end{proof}

Our next lemma shows that almost every matrix tuples have simple Lyapunov spectra with respect to any Bernoulli measure. The proof of the lemma is similar to that of \cite[Theorem~7.12]{Vianabook}.
We say that $\mathbf{A}=(A_1,\dots,A_N) \in GL_d(\R)^N$ is \emph{pinching} if there exists $\ii\in\Sigma_*$ such that $A_{\ii}$ has only real eigenvalues with distinct absolute value. Furthermore, we say that $\mathbf{A}$ is \emph{twisting} if there exists $\jj\in\Sigma_*$ such that $A_{\jj}E\cap F=\{0\}$ for all invariant subspaces $E$ and $F$ of $A_{\ii}$ with dimension $\dim E+\dim F\leq d$. Assuming $\mathbf{A}$ to be pinching and twisting guarantees the Lyapunov exponents to be distinct; see \cite[Theorem A]{Avila_Viana}. In the study of self-affine sets, this is often a useful property; for example, see \cite[Theorem 3.1 and Remark 3.2]{KaenmakiKoivusaloRossi2015}.

\begin{lemma}\label{lem:fullmeasureA}
  Let $\mathcal{M}$ be the product of Haar measures on $GL_d(\R)^N$, i.e.\
  \begin{equation*}
    \mathcal{M}(\mathcal{B})
    %= \prod_{i=1}^N \Theta(\mathcal B)
    = \int_{\mathcal{B}} \prod_{i=1}^N \biggl( \frac{1}{\det A_i} \biggr)^d \dd\LL^{d^2N}(\Av)
  \end{equation*}
  for all measurable $\mathcal{B} \subset GL_d(\R)^N$.
%   $$
%     \mathrm{d}\mathcal{M}(\Av)=\prod_{i=1}^N\frac{1}{(\det A_i)^d}\mathrm{d}\mathcal{L}^{d^2}(A_i).
%   $$
  Then
  $$
  \mathcal{M}(\{ \Av \in GL_d(\R)^N : \text{there exist $\pv\in(0,1)^N$ and $i\neq j$ such that }\chi_i(\Av,\pv)=\chi_j(\Av,\pv) \})=0.
  $$
\end{lemma}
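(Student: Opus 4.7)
The plan is to apply the simplicity criterion of Avila and Viana: if $\Av \in GL_d(\R)^N$ is both pinching and twisting, then by \cite[Theorem~A]{Avila_Viana} (cf.\ the discussion preceding the lemma) the Lyapunov exponents $\chi_1(\Av,\pv),\ldots,\chi_d(\Av,\pv)$ are pairwise distinct for every $\pv \in (0,1)^N$. Hence it suffices to prove that the set of tuples failing pinching or failing twisting is $\mathcal M$-null; the two parts are handled separately below.

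For twisting, fix a pinching word $\ii$; then $A_{\ii}$ has $d$ distinct real eigenvalues, and its invariant subspaces are precisely the $2^d$ subspaces spanned by subsets of its eigenvectors. Expressing $A_{\jj}$ in the eigenbasis of $A_{\ii}$, the condition $A_{\jj}E \cap F = \{0\}$ for each pair $(E,F)$ of invariant subspaces with $\dim E + \dim F \le d$ becomes the non-vanishing of a specific minor, which is a non-trivial polynomial in the entries of $A_{\jj}$. Since each such non-trivial polynomial condition fails on a Lebesgue-null set in $GL_d(\R)$, and there are only finitely many pairs $(E,F)$ and countably many words $\jj$, twisting holds on an $\mathcal M$-conull subset of the pinching set.

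The pinching part is the main obstacle, since pinching is a semi-algebraic (sign-on-discriminant) condition rather than an algebraic one. The set $P \subset GL_d(\R)$ of matrices with $d$ distinct real eigenvalues of distinct absolute values is open and non-empty (it contains every diagonal matrix with distinct positive entries), so $\mathcal P_{\ii} = \{\Av : A_{\ii} \in P\}$ is open in $GL_d(\R)^N$ via the polynomial map $\Av \mapsto A_{\ii}$, and $\mathcal P = \bigcup_{\ii \in \Sigma_*} \mathcal P_{\ii}$ is open. To see that $\mathcal P^c$ is $\mathcal M$-null, we follow the strategy of \cite[Theorem~7.12]{Vianabook}: the complement consists of tuples whose semigroup $\{A_{\ii}\}_{\ii \in \Sigma_*}$ avoids $P$ entirely, which forces the semigroup to be contained in a proper algebraic subvariety of $GL_d(\R)$ (for instance, one preserving a common complex structure or a non-trivial decomposition of $\R^d$). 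Such exceptional configurations are captured by non-trivial polynomial relations among the entries of $A_1,\ldots,A_N$, and therefore form a countable union of real-analytic subvarieties of positive codimension in $GL_d(\R)^N$, which is $\mathcal M$-null. Combining the two steps, pinching and twisting hold $\mathcal M$-almost everywhere, which together with the Avila-Viana criterion concludes the argument.
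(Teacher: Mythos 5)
Your overall strategy matches the paper's: invoke the Avila--Viana simplicity criterion (pinching $+$ twisting $\Rightarrow$ simple Lyapunov spectrum for every Bernoulli measure) and then argue that the set of non-pinching-and-twisting tuples is $\mathcal M$-null. The twisting step is also in the same spirit as the paper's: once pinching holds for some word $\iii$, twisting is a polynomial non-degeneracy condition on the entries, so it fails on a null set. So far, so good.

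The gap is exactly where you flagged it yourself: the pinching step. Your claim that ``the complement consists of tuples whose semigroup avoids $P$ entirely, which forces the semigroup to be contained in a proper algebraic subvariety of $GL_d(\R)$'' is not justified and is in fact false as stated. Already for $d=2$, the set of pairs $(A_1,A_2)$ with \emph{both} generators elliptic is an open subset of $GL_2(\R)^2$ of positive Lebesgue measure; there is no polynomial relation on the entries of $A_1,A_2$ that detects it, and yet such pairs typically are pinching because some product $A_1^{n}A_2^{m}$ is hyperbolic even though no generator is. Pinching failing is a closed (semi-algebraic) condition on the entire semigroup, not an algebraic condition on a fixed finite set of generators, and one cannot conclude ``contained in a countable union of positive-codimension real-analytic subvarieties'' without actually producing, after arbitrarily small perturbation, a word $\iii$ with $A_{\iii}$ hyperbolic with distinct moduli. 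This is precisely what the paper's proof does: in $d=2$ it perturbs so that one generator is an irrational rotation while the other is non-conformal, then finds $n$ with $A_1^{n}A_2$ contracting a cone strictly, hence hyperbolic; for $d\ge 3$ it builds a flag of cones $C_1\subset G(d,1),\dots,C_{d-1}\subset G(d,d-1)$ adapted to the (possibly complex) eigenvalue structure of $A_1,A_2$, perturbs to ensure irrational and rationally independent arguments, and shows that a suitable long word maps each $\overline{C_i}$ strictly into $C_i$, which forces real eigenvalues with distinct moduli. This dynamical cone-contraction argument (together with the perturbation to avoid resonances) is the genuine content of the lemma and is absent from your proposal; without it, the measure-zero claim for the non-pinching set is unproven.
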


\begin{proof}
By \cite[Theorem~A]{Avila_Viana}, it is sufficient to show that the set of $\Av$ without pinching and twisting condition is contained in countably many $d^2N-1$ dimensional manifolds. It is easy to see that without loss of generality, we may assume that $N=2$.

We start by sketching the argument in the case $d=2$. This follows from \cite[Theorem A]{Avila_Viana}, where the assumptions of the theorem are satisfied by the following argument: After perturbing one of the maps by a small rotation, we see that the twisting condition holds. The pinching condition, in the case where $A_1$ or $A_2$ have only real eigenvalues, follows immediately by slightly perturbing the matrix having real eigenvalues. Suppose then that both $A_1$ and $A_2$ have complex eigenvalues. Then we may consit $A_1$ to be the
composition of a rotation with a dilation. Up to perturbation, we may
suppose that the rotation is irrational. Up to another perturbation, we
may suppose that $A_2$ is not conformal: there is a cone $C$ whose image
$A_2(C)$ is strictly slimmer. Then we may find arbitrarily large values of $n$
such that $B=A_1^n A_2 (C)$ is contained in $C$.
This implies that the eigenvalues of $B$ are real and different in absolute value and hence, the pinching condition holds.

Henceforth we assume that $d\geq3$. Curiously, this assumption is needed in our argument. Let $\lambda_1(A),\ldots,\lambda_d(A)$ be the eigenvalues of a matrix $A$ written in a decreasing order by the absolute value, i.e.\ $|\lambda_i(A)|\geq|\lambda_{i+1}(A)|$ for all $i$. Let $A_1,A_2\in GL_d(\R)$. If $\lambda_i(A_j)\in\R$, then $|\lambda_i(A_j)|\neq|\lambda_k(A_j)|$ for $k\neq i$. If $\lambda_i(A_j)=\overline{\lambda_{i+1}(A_j)}\in\mathbb{C}$, then $|\lambda_i(A_j)|\neq|\lambda_k(A_j)|$ for $k\notin \{i,i+1\}$. Observe that in this case, by an arbitrary small pertubation on the matrices $A_1$ and $A_2$, we may assume that the argument of the eigenvalue $\lambda_i(A_j)$ is irrational. Similarly, we may also assume that the eigenvalues are rationally independent, i.e.\ none of the eigenvalues is not rational multiple of each other.

  Let us assume that $A_j$ has $r_j$ real and $c_j$ complex eigenvalues. Then $r_j+2s_j=d$ and, moreover, all eigenspaces of $A_j$ have dimension either $1$ or $2$. Let us denote the eigenspaces of $A_j$ by $\xi_{e(1)}(A_j),\dots,\xi_{e(d)}(A_j)$ corresponding to the eigenvalues $\lambda_1(A_j),\dots,\lambda_d(A_j)$. This means that if $\lambda_i(A_j)\in\R$, then $\dim\xi_{e(i)}(A_2)=1$ and $\xi_{e(i)}(A_2)$ is the eigenspace of $\lambda_i(A_j)\in\R$. On the other hand, if $\lambda_i(A_j)=\overline{\lambda_i(A_j)}\in\mathbb{C}$, then $\xi_{e(i)}(A_2)=\xi_{e(i+1)}(A_2)$, $\dim\xi_{e(i)}(A_2)=2$, and $\xi_{e(i)}(A_2)$ is the eigenspace of $\lambda_i(A_j)$. By an arbitrary small perturbation, we may assume that the eigenspaces are in general position: for every $i \in \{1,\ldots,d\}$ there exist non-zero subspaces $E_1,E_2\subset\xi_{e(i)}(A_1)$ and $F_1,F_2\subset\xi_{e(i+1)}(A_2)$ such that
  \begin{equation}\label{eq:generic}
  \begin{split}
    \mathrm{span}\{\xi_{e(1)}(A_1),\dots,E_1\}\cap\mathrm{span}\{F_1,\dots,\xi_{e(d)}(A_2)\}=\{0\},\\
    \mathrm{span}\{\xi_{e(1)}(A_2),\dots,E_2\}\cap\mathrm{span}\{F_2,\dots,\xi_{e(d)}(A_1)\}=\{0\}.
  \end{split}
  \end{equation}
%   Thus, for any $\underline{v}\in\mathrm{span}\{\xi_m,\dots,\xi_d\}\setminus\mathrm{span}\{\xi_{m+1},\dots,\xi_d\}$, $d(\mathrm{span}\{A_j^k\underline{v}\},\xi_m)\to 0$ as $k\to\infty$, where $d$ denotes the usual distance between subspaces. On the other hand, if $\dim\xi_m=2$ then for any linearly independent $\underline{v},\underline{w}\in\xi_m$, $d(\mathrm{span}\{A_j^k\underline{v}\},\mathrm{span}\{A_j^k\underline{w}\})\approx d(\mathrm{span}\{\underline{v}\},\mathrm{span}\{\underline{w}\})$ for every $k\geq1$.
  We define a flag of cones $C_1\times C_2\times\cdots\times C_{d-1}$ in $G(1,d)\times G(2,d)\times\cdots\times G(d-1,d)$ by taking sufficiently small neighbourhoods of the flag of invariant subspaces $\mathrm{span}\{\xi_{e(1)}(A_1),\dots,\xi_{e(i)}(A_1)\}$ and sufficiently small neighbourhoods of an $i$-dimensional subspace of $\mathrm{span}\{\xi_{e(1)},\dots,\xi_{e(i)}(A_1)\}$, which contains $\mathrm{span}\{\xi_{e(1)},\dots,\xi_{e(i-1)}(A_1)\}$ if $\lambda_i(A_1)=\overline{\lambda_{i+1}(A_1)}\in\mathbb{C}$ such that it satisfies the following six conditions:
  \begin{enumerate}
    \item every $C_i$ is open and simply connected in $G(i,d)$,
    \item for every $E\in C_i$ there exists $F\in C_{i+1}$ such that $E\subset F$,
    \item if $\lambda_i(A_1)=\overline{\lambda_{i+1}(A_1)}\in\mathbb{C}$, then $C_i$ contains  $\mathrm{span}\{\xi_{e(1)}(A_1),\dots,\xi_{e(i-1)}(A_1),E\}$, where $E$ is as in \eqref{eq:generic}, and $\overline{C_i}$ is transversal to $\mathrm{span}\{E',\xi_{e(i+2)}(A_1),\dots,\xi_{e(d)}(A_1)\}$ for a subspace $E'$ with $\mathrm{span}\{E,E'\}=\xi_{e(i)}(A_1)$,
    \item if $\lambda_i(A_1)\in\R$, then $C_{i}$ contains $\mathrm{span}\{\xi_{e(1)}(A_1),\dots,\xi_{e(i)}(A_1)\}$, and $\overline{C_i}$ is transversal to $\mathrm{span}\{\xi_{e(i+1)}(A_1),\dots,\xi_{e(d)}(A_1)\}$,
    \item if $\lambda_i(A_2)\in\R$, then $\mathrm{span}\{\xi_{e(1)}(A_2),\dots,\xi_{e(i)}(A_2)\}$ is not contained in $\overline{C_i}$ and $\overline{C_i}$ is transversal to $\mathrm{span}\{\xi_{e(i+1)}(A_2),\dots,\xi_{e(d)}(A_2)\}$,
    \item if $\lambda_i(A_2)=\overline{\lambda_{i+1}(A_2)}\in\mathbb{C}$, then $\overline{C_i}$ is transversal to $\mathrm{span}\{F,\xi_{e(i+2)}(A_2),\dots,\xi_{e(d)}(A_2)\}$ and $\mathrm{span}\{\xi_{e(1)}(A_2),\dots,F\}\notin\overline{C_{i}}$ for all proper subspaces $F\subset\xi_{e(i)}(A_2)$, $\overline{C_{i+1}}$ is transversal to $\mathrm{span}\{\xi_{e(i+2)}(A_2),\dots,\xi_{e(d)}(A_2)\}$, and $\mathrm{span}\{\xi_{e(1)}(A_2),\dots,\xi_{e(i+1)}(A_2)\}\notin\overline{C_{i}}$.
  \end{enumerate}
  Here two collections of subspaces are transversal if they form a positive angle.

  Observe that, by the properties of the eigenvalues, for both $j\in\{1,2\}$, if $\lambda_i(A_j)\in\R$, then
  \begin{equation*}
    A_j^k(C_i)\to\mathrm{span}\{\xi_1(A_j),\dots,\xi_{e(i)}(A_j)\}
  \end{equation*}
  uniformly. Moreover, if $\lambda_i(A_j)=\overline{\lambda_{i+1}(A_j)}\in\mathbb{C}$, then
  \begin{equation*}
    A_j^k(C_{i+1})\to\mathrm{span}\{\xi_1(A_j),\dots,\xi_{e(i+1)}(A_j)\} \quad \text{and} \quad
    d(A_j^k(C_i),\mathrm{span}\{\xi_1(A_j),\dots,\xi_{e(i)}(A_j))\to 0
  \end{equation*}
  uniformly. Note that in this case, $\mathrm{diam}(A_j^k(C_i))$ does not tend to zero. Thus, if $i$ is an index such that either $\lambda_i(A_1)\in\R$, or $\lambda_i(A_2)\in\R$, or $\lambda_{i-1}(A_1)=\overline{\lambda_{i}(A_1)}\in\mathbb{C}$ and $\lambda_{i-1}(A_2)=\overline{\lambda_{i}(A_2)}\in\mathbb{C}$, then there exist $n_1,n_2 \in \N$ such that
  \begin{equation}\label{eq:conv1}
    A_1^{n_2}A_2^{n_1}(\overline{C_i})\subset C_i.
  \end{equation}
  Let $i_1,\dots,i_p$ be the indices for which $\lambda_{i_j}(A_1)=\overline{\lambda_{i_j+1}(A_1)}\in\mathbb{C}$ and $\lambda_{i_j}(A_2)=\overline{\lambda_{i_j+1}(A_2)}\in\mathbb{C}$. Let $v_1^j,\dots,v_d^j$ be a basis of $\R^d$ such that $v_i^j\in\xi_{e(i)}(A_j)$. Observe that there exists a subsequence $(m_q)_{q \in \N}$ such that $(\lambda_1(A_k)\cdots\lambda_{i_j-1}(A_k)|\lambda_{i_j}(A_k)|)^{-m_q} (A_k^{\wedge i_j})^{m_q}\to P_k$, where $P_k$ is the projection onto $V_k=\mathrm{span}\{v^k_1\wedge\cdots\wedge v_{i_j-1}^k\wedge v_{i_j}^k,v^k_1\wedge\cdots\wedge v_{i_j-1}^k\wedge v_{i_j+1}^k\}$ along the invariant subspace $W_k$ of $A_k^{\wedge i_j}$ transversal to $V_k$ so that $\dim V_k+\dim W_k=\binom{d}{i_j}$. Note that if $v\in\wedge^{i_j}\R^d$, then $\|P_kv\|\leq\|v\|$, and if $v\notin V_k$, then $\|P_kv\|<\|v\|$. Since by a sufficiently small perturbation, we may assume that the subspaces $V_1, W_1$ and $V_2, W_2$ are in general position, there exists a unique $a_k\in V_k$ with $\|a_k\|=1$ such that $\sup_{v\in V_{k}}\|P_{3-k}v\|/\|v\|=\|P_{3-k}a_{k}\|$. Thus, there exist cones $C_k'$ in $V_k$ with arbitrary large diameter such that $\mathrm{diam}(P_{3-k}(C'_k))< c'\diam{C'_k}$ and $a_k\in C'_k$ for some $0<c'<1$. Let us denote the rotation on $V_k$, which maps $P_{k}a_{3-k}$ to $a_k$, by $O_k$. Therefore, $O_kP_{k}O_{3-k}P_{3-k}(\overline{C'_k})\subset C'_k$. By the irrationality of the argument of the eigenvalues, one can choose $q$ sufficiently large and $n_1, n_2 \in \N$ such that $(A_1^{\wedge i_j})^{n_1}(A_1^{\wedge i_j})^{m_q}(A_{2}^{\wedge i_j})^{n_2}(A_{2}^{\wedge i_j})^{m_q}(\overline{C'_k})\subset C'_k$. Using the natural correspondence between $G(d,i_j)$ and $\wedge^{i_j}\R^d$, the set $C_k'$ corresponds to a cone $C_k''$ in $\mathrm{span}\{\xi_{e(1)}(A_k),\dots,\xi_{e(i_j)}(A_k)\}$ and hence, by choosing the cone $C_{i_j}$ in $G(d,i_j)$ sufficiently close to $C_1''$, we get
  \begin{equation}\label{eq:conv3}
    A_1^{n_1}A_1^{m_q}A_{2}^{n_2}A_{2}^{m_q}(\overline{C_{i_j}})\subset C_{i_j}
  \end{equation}
  for all $j\in\{1,\dots,p\}$.
  Thus, by irrationality and rational independence of the arguments of the complex eigenvalues of $A_1$ and $A_2$, \eqref{eq:conv1} and \eqref{eq:conv3} imply that there exist $n_1',n_2'\in \N$ such that
  $$
    A_1^{n_2'}A_2^{n_1'}(\overline{C_i})\subset C_i
  $$
  for all $i\in\{1,\dots,d\}$.
  Therefore, $A_\iii = A_1^{n_2'}A_2^{n_1'}$ has only real eigenvalues with distinct absolute value. This shows that $(A_1,A_2)$ is pinching. By a similar argument, there exist $m_1',m_2' \in \N$ such that $A_\jjj=A_2^{m_2'}A_1^{m_1'}$ has only real eigenvalues with distinct absolute value and $E\cap F=\{0\}$ for all invariant subspaces $E$ of $A_{\ii}$ and $F$ of $A_{\jj}$ with $\dim E+\dim F\leq d$. This proves the twisting property.
\end{proof}

Recall that $\mathbb A$ is the collection of all contractive matrix tuples having distinct Lyapunov exponents with respect to an ergodic equilibrium state, see \S \ref{sec:sub-systems}, and $\mathcal{D}$ is defined in \eqref{eq:d}.

\begin{proof}[Proof of Theorem~\ref{thm:main}]
Let $\mathcal{M}$ be as in Lemma \ref{lem:fullmeasureA}. Since $\mathcal{L}^{d^2N}\ll\mathcal{M}$ it suffices to check that the assumptions of Theorem~\ref{thm:aeformeasure} hold for the measure $\mathcal{M}$.
By the Haar property, for any measurable $L^1(\mathcal{M})$-function $f\colon GL_d(\R)^N\to\R$ and for every $G\in O(d)$, we have
\begin{align*}
  \int f(\Av)\,\mathrm{d}\mathcal{M}(\Av) & = \idotsint f(A_1,\dots,A_N)\prod_{i=1}^N\biggl(\frac{1}{\det A_i}\biggr)^d\,\mathrm{d}\mathcal{L}^{d^2}(A_1)\cdots\mathrm{d}\mathcal{L}^{d^2}(A_N) \\
  %&= \idotsint f(G^* A_1G,\dots,G^*A_NG)\prod_{i=1}^N\biggl(\frac{1}{(\det A_i)}\biggr)^d\,\mathrm{d}\mathcal{L}^{d^2}(A_1)\cdots\mathrm{d}\mathcal{L}^{d^2}(A_N) \\
  &= \idotsint f(G^T A_1G,\dots,G^TA_NG)\prod_{i=1}^N\biggl(\frac{1}{\det A_i}\biggr)^d\,\mathrm{d}\mathcal{L}^{d^2}(A_1)\cdots\mathrm{d}\mathcal{L}^{d^2}(A_N).
\end{align*}
Thus, $\mathcal{M}=\int H_{\Av}\Theta\,\mathrm{d}\mathcal{M}(\Av)$, where $H_{\Av}(G)=(G^TA_1G,\dots,G^TA_NG)$.

By Lemma~\ref{lem:fullmeasureA}, for $\mathcal{M}$-almost every $\Av$, all the Bernoulli measures have simple Lyapunov spectra and thus, by applying Lemma~\ref{lem:safreg}, any Bernoulli measure satisfies the Ledrappier-Young formula. Hence, the first statement follows by the combination of Lemma~\ref{lem:ssc-d}, Proposition~\ref{thm:higher-dim-trans}, and Theorem~\ref{thm:aeformeasure}.

To turn to the set dimension statement, observe that by \eqref{eq:domsplit}, for every $\Av\in\mathcal{D}$, the $s$-equilibrium state $\mu_{\Av}$ has simple Lyapunov spectrum for $s = \dimaff\Av$. Moreover, by Lemma~\ref{lem:quasi} the $\mu_{\Av}$ is quasi-Bernoulli. Thus, by Lemma~\ref{lem:safreg}, $\mu_{\Av}$ satisfies the Ledrappier-Young formula. Therefore, by the combination of Lemma~\ref{lem:ssc-d}, Proposition~\ref{thm:higher-dim-trans}, Theorem~\ref{thm:almosteverymatrix}, and the fact that
$$
  \diml\mu_{\Av}=\dimaff\Av,
$$
the second assertion follows.
\end{proof}

\section{Further discussion and questions}

We finish the article by posing couple of questions. An affirmative answer for either of the two following questions would immediately improve Theorem \ref{thm:main}.

\begin{question}
  Recall that $\mathbb{A}$ is the collection of all tuples $\Av \in GL_d(\R)^N$ of contractive matrices that satisfy $\chi_i(\Av,\mu) \ne \chi_j(\Av,\mu)$ for $i \ne j$ where $\mu$ is an ergodic $s$-equilibrium state of $\Av$ and $s=\dimaff(\Av)$. Since $\mathcal{D}$ defined in \eqref{eq:d} is open, the set $\mathbb{A}$ contains interior points; see \S \ref{sec:higherdimcase}. Is $\mathbb{A}$ an open and dense subset of $GL_d(\R)^N$ with full Lebesgue measure?
\end{question}

In the planar case, the above question is already addressed in \cite[Theorem 3.1]{MorrisShmerkin2016}; see also \cite[Theorem 13]{Morris2016}.

\begin{question}
  Can every $s$-equilibrium state of $\Av$ for $s=\dimaff\Av$ be approximated by step-$n$ Bernoulli measures? More precisely, does there, for every $\eps>0$, exist $n \in \N$ and a step-$n$ Bernoulli measure $\mu$ such that $\diml\mu \ge \dimaff\Av - \varepsilon$? Observe that, by \cite[\S 3.2]{MorrisShmerkin2016}, this cannot be done with fully supported step-$n$ Bernoulli measures.
\end{question}

It would also improve our results if the $s$-equilibrium state of $\Av$ for $s=\dimaff\Av$ turned out to be quasi-Bernoulli for $\LL^{d^2N}$-almost all $\Av \in GL_d(\R)^N$. Recall that, by \cite[Propositions 3.4 and 3.6]{KaenmakiBing}, the $s$-equilibrium state is unique and satisfies a certain Gibbs property in a set of full Lebesgue measure. However, the proposition below implies that the quasi-Bernolli property does not hold generically.

Let us for simplicity assume that $d=2$. We say that a matrix $A$ is \emph{hyperbolic} if it has real unequal eigenvalues, \emph{elliptic} if it has non-real eigenvalues, and \emph{irrational elliptic} if it has non-real eigenvalues whose arguments are irrational multiples of $\pi$.

\begin{proposition} \label{thm:morris}
  Suppose that $\Av = (A_1,\ldots,A_N) \in GL_2(\R)^N$ is irreducible and generates a semigroup which contains a hyperbolic matrix and an irrational elliptic matrix. Then for every $0<s<2$ the $\fii^s$-equilibrium state of $\Av$ is not quasi-Bernoulli.
\end{proposition}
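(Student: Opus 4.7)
The plan is to translate the quasi-Bernoulli property into a super-multiplicativity statement for $\fii^s$ via the Gibbs property, and then to violate super-multiplicativity by exploiting the interaction between the hyperbolic and the irrational elliptic elements on the projective line.

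Under the irreducibility assumption, the $\fii^s$-equilibrium state $\mu$ of $\Av$ is unique and satisfies a Gibbs property: there exist $C_1 \ge 1$ and $P \in \R$ such that
\begin{equation*}
  C_1^{-1} e^{-|\mathbf{k}|P}\fii^s(A_{\mathbf{k}}) \le \mu([\mathbf{k}]) \le C_1 e^{-|\mathbf{k}|P}\fii^s(A_{\mathbf{k}})
\end{equation*}
for all $\mathbf{k}\in\Sigma_*$ (this is a Feng-type result for strongly irreducible tuples; in $GL_2(\R)$ the presence of the irrational elliptic element promotes irreducibility to strong irreducibility, and in the argument for $1<s<2$ the action on $\wedge^2\R^2$ is scalar, so only irreducibility of $\Av$ itself is required). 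If $\mu$ were quasi-Bernoulli, then combining this with the quasi-Bernoulli inequality would yield a constant $C_2\ge 1$ such that
\begin{equation*}
  \fii^s(A_{\iii\jjj}) \ge C_2^{-1}\fii^s(A_{\iii})\fii^s(A_{\jjj})
\end{equation*}
for every $\iii,\jjj\in\Sigma_*$. The remainder of the proof exhibits pairs of words violating this.

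Pick $\mathbf{u},\mathbf{v}\in\Sigma_*$ such that $H := A_{\mathbf{u}}$ is hyperbolic with real eigenvalues $\lambda_1,\lambda_2$, $|\lambda_1|>|\lambda_2|$, and corresponding eigenvectors $e_1,e_2$, and $E := A_{\mathbf{v}}$ is irrational elliptic with spectral radius $r>0$. Since $E$ induces an irrational rotation on $\mathbb{P}^1(\R)$, for each $k\in\N$ we may choose $n_k\in\N$ so that the projective distance from $E^{n_k}[e_1]$ to $[e_2]$ is at most $|\lambda_2/\lambda_1|^k$. Setting $\iii_k = \mathbf{u}^k$ and $\jjj_k = \mathbf{v}^{n_k}\mathbf{u}^k$, we have $A_{\iii_k} = H^k$, $A_{\jjj_k} = E^{n_k}H^k$, and $A_{\iii_k\jjj_k} = H^k E^{n_k} H^k$. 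The key operator-norm estimate is
\begin{equation*}
  \|H^k E^{n_k}H^k\| \le C r^{n_k}|\lambda_1\lambda_2|^k.
\end{equation*}
This follows by writing any unit vector $v$ in the eigenbasis of $H$: because $E^{n_k}$ sends $[e_1]$ within distance $|\lambda_2/\lambda_1|^k$ of $[e_2]$, the $e_1$-component of $E^{n_k}H^k v$ has size at most $Cr^{n_k}|\lambda_2|^k$, to which the outer $H^k$ contributes a factor $|\lambda_1|^k$, while the $e_2$-component has size at most $Cr^{n_k}|\lambda_1|^k$ but is stretched only by $|\lambda_2|^k$. Combined with the routine lower bounds $\|H^k\|\asymp|\lambda_1|^k$ and $\|E^{n_k}H^k\|\asymp r^{n_k}|\lambda_1|^k$ (each obtained by testing on $e_1$), this gives $\|A_{\iii_k\jjj_k}\|/(\|A_{\iii_k}\|\|A_{\jjj_k}\|) \le C'|\lambda_2/\lambda_1|^k \to 0$.

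Since $\fii^s(A) = \|A\|^s$ for $0<s\le 1$, and $\fii^s(A) = \|A\|^{2-s}|\det A|^{s-1}$ for $1\le s<2$ with the determinant factors cancelling in the ratio by multiplicativity, the quotient $\fii^s(A_{\iii_k\jjj_k})/(\fii^s(A_{\iii_k})\fii^s(A_{\jjj_k}))$ tends to zero as $k\to\infty$ for every $0<s<2$. This contradicts the super-multiplicativity derived from the Gibbs property, completing the proof. The main obstacle will be the careful verification of the operator-norm bound on $\|H^k E^{n_k}H^k\|$, which requires quantitative control of how the projective rotation induced by $E^{n_k}$ interacts with the (generally non-orthogonal) eigenbasis of $H$; a secondary point is citing the Gibbs property in the right generality for $\fii^s$ under mere irreducibility.
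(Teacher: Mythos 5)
Your proposal is correct, but it takes a genuinely different route from the paper. The paper argues by compactness: after reducing (exactly as you do) the quasi-Bernoulli assumption to a uniform lower bound $\|B_1B_2\|\geq K^{-1}\|B_1\|\|B_2\|$, it passes to the closure $\mathcal{S}'=\overline{\mathbb{R}\mathcal{S}}$ of the normalized semigroup, notes that $\mathcal{S}'$ contains all of $SO(2)$ (as the closure of the normalized powers of the elliptic element) and the rank-one projection $P=\lim_n X_1^n/\|X_1^n\|$ (from the hyperbolic element), and then picks $R\in SO(2)$ with $R(\mathrm{img}\,P)\subset\ker P$ so that $\|PRP\|=0$ contradicts the inequality. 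Your argument instead stays inside the finite-word semigroup and constructs explicit sequences $\iii_k=\mathbf{u}^k$, $\jjj_k=\mathbf{v}^{n_k}\mathbf{u}^k$ with $\|A_{\iii_k\jjj_k}\|/(\|A_{\iii_k}\|\|A_{\jjj_k}\|)\lesssim|\lambda_2/\lambda_1|^k\to 0$, using the irrational rotation to steer $E^{n_k}[e_1]$ close to $[e_2]$. The trade-offs: the paper's proof avoids bookkeeping with the non-orthogonal eigenbasis of $H$ and the projective modulus of continuity, but needs the continuity step of extending the inequality to $\mathcal{S}'$; your proof is entirely elementary and constructive and even gives a geometric rate of failure, at the cost of a slightly more involved two-sided norm estimate. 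Both proofs reduce to the operator norm identically (via cancellation of the determinant factor in the ratio for $1<s<2$), and both cite the Gibbs property under irreducibility without detailed justification — you flag this more carefully than the paper does.
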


\begin{proof}
  Let $\mathcal{S}$ be the semigroup generated by $\Av$. Let $0<s<2$ and $\mu$ be the $\fii^s$-equilibrium state. Suppose for a contradiction that $\mu$ is quasi-Bernoulli. Let $X_1 \in \mathcal{S}$ be a hyperbolic matrix and $X_2 \in \mathcal{S}$ be an irrational elliptic matrix. By changing the basis, we may, without loss of generality, assume that $X_2$ is a scalar multiple of a rotation matrix. Since $\mu$ is quasi-Bernoulli and, by irreducibility, satisfies the Gibbs property, there exists $K \ge 1$ such that
  \begin{equation} \label{eq:estimate}
    K^{-1}\|B_1\|\|B_2\| \le \|B_1B_2\| \le \|B_1\|\|B_2\|.
  \end{equation}
  Let $\mathcal{S}' = \overline{\mathbb{R}\mathcal{S}}$ be the closure of the smallest homogeneous semigroup containing $\mathcal{S}$. Note that \eqref{eq:estimate} holds for all $B_1,B_2 \in \mathcal{S}'$. The closure of $\{ \det(X_2)^{n/2}X_2^n : n \in \N \}$ is $SO(2)$ and is contained in $\mathcal{S}'$. Also, $P=\lim_{n \to \infty} X_1^n/\|X_1^n\|$ is rank one projection and is contained in $\mathcal{S}'$. Choose $R \in SO(2) \subset \mathcal{S}'$ such that $R(\mathrm{img}(P)) \subset \mathrm{ker}(P)$. Thus $PRP=0$, $\|P\| = \|RP\| = 1$, and $P,R \in \mathcal{S}'$. Therefore, $0<K^{-1} \le \|PRP\| = 0$ which is a contradiction.
\end{proof}

Since $\mathcal{A} = \{ \Av : \text{the semigroup generated by } \Av \text{ contains an elliptic matrix} \}$ has positive Lebesgue measure and, after a small perturbation, any $\Av \in \mathcal{A}$ generates a semigroup that contains an irrational elliptic matrix and a hyperbolic matrix, we see that the $s$-equilibrium state of $\Av$ for $s=\dimaff\Av$ is not quasi-Bernoulli for $\LL^{4N}$-almost all $\Av \in GL_2(\R)^N$.

We finish the paper by posing the following question.

\begin{question}
  Does it hold that, for Lebesgue-almost every $\Av = (A_1,\ldots,A_N) \in GL_d(\mathbb{R})^N$, either $\Av \in \mathcal{D}$ or for every $0<s<d$ the $\varphi^s$-equilibrium state of $\Av$ is not quasi-Bernoulli.
\end{question}

This question is related to the question of Yoccoz \cite[Question 4]{Yoccoz2004} (see also \cite[Question 4]{AvilaBochiYoccoz2010}).

\begin{ack}
  The authors thank Marcelo Viana for helpful discussions related to Lyapunov exponents, Ian Morris for pointing out Proposition \ref{thm:morris}, and the anonymous referee for suggesting Proposition \ref{prop:report}. 
\end{ack}

\bibliography{vaitbib}

\begin{thebibliography}{10}

\bibitem{AvilaBochiYoccoz2010}
A.~Avila, J.~Bochi, and J.-C. Yoccoz.
\newblock Uniformly hyperbolic finite-valued {${\rm
  SL}(2,\mathbb{R})$}-cocycles.
\newblock {\em Comment. Math. Helv.}, 85(4):813--884, 2010.

\bibitem{Avila_Viana}
A.~Avila and M.~Viana.
\newblock Simplicity of {L}yapunov spectra: a sufficient criterion.
\newblock {\em Port. Math. (N.S.)}, 64(3):311--376, 2007.

\bibitem{barany2015}
B.~B{\'a}r{\'a}ny.
\newblock On the {L}edrappier-{Y}oung formula for self-affine measures.
\newblock {\em Math. Proc. Cambridge Philos. Soc.}, 159(3):405--432, 2015.

\bibitem{baranyhochmanrapaport}
B.~B{\'a}r{\'a}ny, M.~Hochman, and A.~Rapaport.
\newblock Hausdorff dimension of planar self-affine sets and measures.
\newblock preprint, available at arXiv:1712.07353, 2017.

\bibitem{BaranyKaenmaki}
B.~B\'ar\'any and A.~K\"aenm\"aki.
\newblock Ledrappier-{Y}oung formula and exact dimensionality of self-affine
  measures.
\newblock {\em Adv. Math.}, 318:88--129, 2017.

\bibitem{barany2015dimension}
B.~B\'ar\'any and M.~Rams.
\newblock Dimension maximizing measures for self-affine systems.
\newblock {\em Trans. Amer. Math. Soc.}, 370(1):553--576, 2018.

\bibitem{BochiGourmelon}
J.~Bochi and N.~Gourmelon.
\newblock Some characterizations of domination.
\newblock {\em Math. Z.}, 263(1):221--231, 2009.

\bibitem{Falconer88}
K.~Falconer.
\newblock The {H}ausdorff dimension of self-affine fractals.
\newblock {\em Math. Proc. Cambridge Philos. Soc.}, 103(2):339--350, 1988.

\bibitem{Falconer1997}
K.~Falconer.
\newblock {\em Techniques in fractal geometry}.
\newblock John Wiley \& Sons, Ltd., Chichester, 1997.

\bibitem{falconer2015planar}
K.~Falconer and T.~Kempton.
\newblock Planar self-affine sets with equal hausdorff, box and affinity
  dimensions.
\newblock {\it Ergodic Theory Dynam. Systems}, to appear, available at
  arXiv:1503.01270.

\bibitem{FengHu09}
D.-J. Feng and H.~Hu.
\newblock Dimension theory of iterated function systems.
\newblock {\em Comm. Pure Appl. Math.}, 62(11):1435--1500, 2009.

\bibitem{FengShmerkin14}
D.-J. Feng and P.~Shmerkin.
\newblock Non-conformal repellers and the continuity of pressure for matrix
  cocycles.
\newblock {\em Geom. Funct. Anal.}, 24(4):1101--1128, 2014.

\bibitem{FergusonFraserSahlsten2015}
A.~Ferguson, J.~M. Fraser, and T.~Sahlsten.
\newblock Scaling scenery of {$(\times m,\times n)$} invariant measures.
\newblock {\em Adv. Math.}, 268:564--602, 2015.

\bibitem{fraser2014dimensions}
J.~M. Fraser and P.~Shmerkin.
\newblock On the dimensions of a family of overlapping self-affine carpets.
\newblock {\em Ergodic Theory Dynam. Systems}, 36(8):2463--2481, 2016.

\bibitem{hochman2015self}
M.~Hochman.
\newblock On self-similar sets with overlaps and inverse theorems for entropy
  in $\mathbb{R}^d$.
\newblock {\it Mem. Amer. Math. Soc.}, to appear, available at
  arXiv:1503.09043.

\bibitem{Hochman14}
M.~Hochman.
\newblock On self-similar sets with overlaps and inverse theorems for entropy.
\newblock {\em Ann. of Math. (2)}, 180(2):773--822, 2014.

\bibitem{HochmanSolomyak2016}
M.~Hochman and B.~Solomyak.
\newblock On the dimension of {F}urstenberg measure for {$SL_2(\mathbb{R})$}
  random matrix products.
\newblock {\em Invent. Math.}, 210(3):815--875, 2017.

\bibitem{HueterLalley95}
I.~Hueter and S.~Lalley.
\newblock Falconer's formula for the {H}ausdorff dimension of a self-affine set
  in {${\bf R}^2$}.
\newblock {\em Ergodic Theory Dynam. Systems}, 15(1):77--97, 1995.

\bibitem{Hutchinson81}
J.~Hutchinson.
\newblock Fractals and self-similarity.
\newblock {\em Indiana Univ. Math. J.}, 30(5):713--747, 1981.

\bibitem{JordanPollicottSimon07}
T.~Jordan, M.~Pollicott, and K.~Simon.
\newblock Hausdorff dimension for randomly perturbed self affine attractors.
\newblock {\em Comm. Math. Phys.}, 270(2):519--544, 2007.

\bibitem{Kaenmaki04}
A.~K{\"a}enm{\"a}ki.
\newblock On natural invariant measures on generalised iterated function
  systems.
\newblock {\em Ann. Acad. Sci. Fenn. Math.}, 29(2):419--458, 2004.

\bibitem{KaenmakiKoivusaloRossi2015}
A.~K\"aenm\"aki, H.~Koivusalo, and E.~Rossi.
\newblock Self-affine sets with fibred tangents.
\newblock {\em Ergodic Theory Dynam. Systems}, 37(6):1915--1934, 2017.

\bibitem{KaenmakiBing}
A.~K\"aenm\"aki and B.~Li.
\newblock Genericity of dimension drop on self-affine sets.
\newblock {\em Statist. Probab. Lett.}, 126:18--25, 2017.

\bibitem{KaenmakiMorris2016}
A.~K{\"a}enm{\"a}ki and I.~D. Morris.
\newblock Structure of equilibrium states on self-affine sets and strict
  monotonicity of affinity dimension.
\newblock {\it Proc. Lond. Math. Soc.}, to appear, available at
  arXiv:1609.07360.

\bibitem{KaenmakiOjalaRossi2016}
A.~K{\"a}enm{\"a}ki, T.~Ojala, and E.~Rossi.
\newblock {Rigidity of quasisymmetric mappings on self-affine carpets}.
\newblock {\it Int. Math. Res. Not. IMRN}, to appear, available at
  arXiv:1607.02244.

\bibitem{Morris2016}
I.~D. Morris.
\newblock Ergodic properties of matrix equilibrium states.
\newblock {\it Ergodic Theory and Dynam. Systems}, to appear, available at
  arXiv:1603.01744.

\bibitem{MorrisShmerkin2016}
I.~D. Morris and P.~Shmerkin.
\newblock On equality of {H}ausdorff and affinity dimensions, via self-affine
  measures on positive subsystems.
\newblock {\it Trans. Amer. Math. Soc.}, to appear, available at
  arXiv:1602.08789.

\bibitem{PrzytyckiUrbanski89}
F.~Przytycki and M.~Urba{\'n}ski.
\newblock On the {H}ausdorff dimension of some fractal sets.
\newblock {\em Studia Math.}, 93(2):155--186, 1989.

\bibitem{Rapaport2015}
A.~Rapaport.
\newblock On self-affine measures with equal {H}ausdorff and {L}yapunov
  dimensions.
\newblock {\it Trans. Amer. Math. Soc.}, to appear, available at
  arXiv:1511.06893.

\bibitem{Rohlin52}
V.~A. Rohlin.
\newblock On the fundamental ideas of measure theory.
\newblock {\em Amer. Math. Soc. Translation}, 1952(71):55, 1952.

\bibitem{Simmons2012}
D.~Simmons.
\newblock Conditional measures and conditional expectation; {R}okhlin's
  disintegration theorem.
\newblock {\em Discrete Contin. Dyn. Syst.}, 32(7):2565--2582, 2012.

\bibitem{Solomyak98}
B.~Solomyak.
\newblock Measure and dimension for some fractal families.
\newblock {\em Math. Proc. Cambridge Philos. Soc.}, 124(3):531--546, 1998.

\bibitem{Vianabook}
M.~Viana.
\newblock {\em Lectures on {L}yapunov exponents}, volume 145 of {\em Cambridge
  Studies in Advanced Mathematics}.
\newblock Cambridge University Press, Cambridge, 2014.

\bibitem{Yoccoz2004}
J.-C. Yoccoz.
\newblock Some questions and remarks about {${\rm SL}(2,\mathbb{R})$} cocycles.
\newblock In {\em Modern dynamical systems and applications}, pages 447--458.
  Cambridge Univ. Press, Cambridge, 2004.

\end{thebibliography}
\bibliographystyle{abbrv}
\end{document}